\definecolor{asparagus}{rgb}{0.53, 0.66, 0.42}
\definecolor{ballblue}{rgb}{0.13, 0.67, 0.8}
\definecolor{cadmiumgreen}{rgb}{0.0, 0.42, 0.24}
\definecolor{cobalt}{rgb}{0.0, 0.28, 0.67}
\definecolor{darklavender}{rgb}{0.45, 0.31, 0.59}
\definecolor{green(pigment)}{rgb}{0.0, 0.65, 0.31}
\definecolor{blue(ncs)}{rgb}{0.0, 0.53, 0.74}
\definecolor{brandeisblue}{rgb}{0.0, 0.44, 1.0}
\definecolor{darkterracotta}{rgb}{0.8, 0.31, 0.36}
\definecolor{blue(ncs)}{rgb}{0.0, 0.53, 0.74}
\definecolor{brandeisblue}{rgb}{0.0, 0.44, 1.0}
\definecolor{ballblue}{rgb}{0.13, 0.67, 0.8}
\definecolor{blue(ryb)}{rgb}{0.01, 0.28, 1.0}
\definecolor{cobalt}{rgb}{0.0, 0.28, 0.67} % between normal and dark
\definecolor{ceruleanblue}{rgb}{0.16, 0.32, 0.75}
\definecolor{dodgerblue}{rgb}{0.12, 0.56, 1.0} % bright sky blue
\newtheorem{theorem}{Theorem}[section]
\newtheorem{corollary}[theorem]{Corollary}
\newtheorem{lemma}[theorem]{Lemma}
\newtheorem{remark}[theorem]{Remark}
\newtheorem{definition}[theorem]{Definition}
\newtheorem{assumption}[theorem]{Assumption}
\newcommand{\quotes}[1]{``#1''}
\newcommand{\vertiii}[1]{{\left\vert\kern-0.25ex\left\vert\kern-0.25ex\left\vert #1 
    \right\vert\kern-0.25ex\right\vert\kern-0.25ex\right\vert}}
\newcommand{\dmu}{\hspace{1pt}\mathrm{d}x}
\renewcommand{\div}{\operatorname{div}}
\newcommand{\curlbf}{\boldsymbol{\operatorname{curl}}\hspace{1pt}}
\newcommand{\curl}{\operatorname{curl}}
\newcommand{\id}{\operatorname{id}}
\newcommand{\Hdiv}{\mathbf{H}(\operatorname{div},\Omega)}
\newcommand{\Hdivomega}[1]{\mathbf{H}(\operatorname{div},#1)}
\newcommand{\Hcurl}{\mathbf{H}(\curlbf,\Omega)}
\newcommand{\Hzerocurl}{\mathbf{H}_0(\curlbf,\Omega)}
\newcommand{\Hzerocurlomega}[1]{\mathbf{H}_0(\curlbf,#1)}
\newcommand{\Hzerodiv}{\mathbf{H}_0(\operatorname{div},\Omega)}
\newcommand{\Hzerodivomega}[1]{\mathbf{H}_0(\operatorname{div},#1)}
\newcommand{\Hdivzero}{\mathbf{H}(\operatorname{div}0,\Omega)}
\newcommand{\Hdivzeroomega}[1]{\mathbf{H}(\operatorname{div}0,#1)}
\newcommand{\Hzerodivzero}{\mathbf{H}_0(\operatorname{div}0,\Omega)}
\newcommand{\Hzerodivzeroomega}[1]{\mathbf{H}_0(\operatorname{div}0,#1)}
\newcommand{\Wdivzero}{W_{\operatorname{div}\hspace{-1pt}0}}
\newcommand{\VHdivzero}{V_{H,\operatorname{div}\hspace{-1pt}0}^k}
\newcommand{\VHkms}{V_H^{k,\mathrm{ms}}}
\newcommand{\VHkmsloc}{V_{H,m}^{k,\mathrm{ms}}}
\newcommand{\TauHz}{\mathcal{T}_{H,z}}
\newcommand{\cuniformity}{\gamma_{\mbox{\tiny qu}}}
\newcommand{\cshape}{\gamma_{\mbox{\tiny sr}}}
\newcommand{\PH}{\operatorname{P}_{\hspace{-1pt}H}}
\renewcommand{\dim}{\text{\normalfont dim}\,}
\newcommand{\Ltwo}{L^2(\Omega)}
\date{April 13, 2026}
\title{Stable localized orthogonal decomposition\\ in Raviart--Thomas spaces}
\begin{document}
\maketitle

\begin{center}
{\large Patrick Henning\footnote[1]{Department of Mathematics, Ruhr University Bochum, DE-44801 Bochum, Germany, \\ e-mail: \textcolor{blue}{patrick.henning@rub.de}.}},
{\large Hao Li\footnote[2]{Department of Applied Mathematics, The Hong Kong Polytechnic University, Hong Kong, \\ e-mail: \textcolor{blue}{hao94.li@polyu.edu.hk}.}}, 
{\large Timo Sprekeler\footnote[3]{Department of Mathematics, Texas A{\&}M University, College Station, TX 77843, USA, \\ e-mail: \textcolor{blue}{timo.sprekeler@tamu.edu}. \\
\indent \textit{Key words and phrases.} Numerical homogenization, localized orthogonal decomposition, mixed finite element methods, Raviart--Thomas spaces. \\
\indent \textit{2020 Mathematics Subject Classification.} 35J15, 65N12, 65N30.}}\\[2em]
\end{center}

\begin{abstract}
This work proposes a computational multiscale method for the mixed formulation of a second-order linear elliptic equation subject to a homogeneous Neumann boundary condition, based on a stable localized orthogonal decomposition (LOD) in Raviart--Thomas finite element spaces. In the spirit of numerical homogenization, the construction provides low-dimensional coarse approximation spaces that incorporate fine-scale information from the heterogeneous coefficients by solving local patch problems on a fine mesh. The resulting numerical scheme is accompanied by a rigorous error analysis, and it is applicable beyond periodicity and scale-separation in spatial dimensions two and three. In particular, this novel realization circumvents the presence of pollution terms observed in a previous LOD construction for elliptic problems in mixed formulation. Finally, various numerical experiments are provided that demonstrate the performance of the method.
\end{abstract}

\section{Introduction}

In this work, we consider the finite element approximation of the Neumann problem for the prototypical second-order linear elliptic equation in divergence form, i.e.,
\begin{align}
\label{elliptic-equation-strong-form}
\begin{split}
-\div \left( \mathbf{A} \nabla p\right) &= f \qquad \mbox{in } \Omega,\\
\mathbf{A} \nabla p \cdot \mathbf{n} &= 0 \qquad \mbox{on } \partial\Omega,
\end{split}
\end{align}
posed on a bounded Lipschitz domain $\Omega\subset \mathbb{R}^d$ in dimension $d\in \{2,3\}$ with a piecewise polygonal boundary $\partial\Omega$ and outward unit normal $\mathbf{n}$. We consider \eqref{elliptic-equation-strong-form} in conjunction with the integral constraint
\begin{align}\label{p int constr}
    \int_{\Omega} p\;\mathrm{d}x = 0.
\end{align}
Here, $\mathbf{A}\in L^{\infty}(\Omega,\mathbb{R}^{d\times d}_{\mathrm{sym}})$ is a given uniformly elliptic and highly heterogeneous diffusion coefficient, and $f\in L^2(\Omega)$ is a given right-hand side that satisfies the compatibility condition $\int_{\Omega} f\,\mathrm{d}x = 0$. We highlight that we do not make any structural assumptions such as periodicity or scale-separation on the nature of the heterogeneity. Due to fast structural variations in the coefficient $\mathbf{A}$, and hence also in the solution $p$, problems of the form \eqref{elliptic-equation-strong-form} are called multiscale problems.

In a classical hydrological application, the problem \eqref{elliptic-equation-strong-form} is derived from Darcy's law \cite{Hel97} and describes the flow in a porous medium, where $\mathbf{A}$ is a highly heterogeneous permeability coefficient and the unknown $p$ is the pressure. A crucial property in hydrological simulations is the conservation of mass of the fluid. Analytically, this conservation is ensured by the continuity of fluid fluxes across interfaces of subdomains in normal direction. According to Darcy's law, these fluxes are given by $- \mathbf{A} \nabla p$. In order to reproduce this conservation on a discrete level in numerical approximations, it is common to rewrite equation \eqref{elliptic-equation-strong-form} in a suitable mixed formulation. To be precise, considering the velocity field $\mathbf{u} = \mathbf{A} \nabla p$ as our second unknown, the mixed formulation of the problem \eqref{elliptic-equation-strong-form}--\eqref{p int constr} seeks a velocity-pressure pair $(\mathbf{u},p)\in \Hdivomega{\Omega}\times L^2(\Omega)$ with $\left.\mathbf{u}\cdot \mathbf{n}\right\rvert_{\partial\Omega} = 0$ and $\int_\Omega p\,\mathrm{d}x = 0$, such that
\begin{align}\label{eqn-mixed-form-var-intro}
\begin{split}
\left(\mathbf{A}^{-1} \mathbf{u}, \mathbf{v}\right)_{L^2(\Omega)}+(\div \mathbf{v}, p)_{L^2(\Omega)} & =0 \qquad\qquad\qquad \mbox{for all }\; \mathbf{v} \in \Hdivomega{\Omega} \text{ with}\left.\mathbf{v}\cdot \mathbf{n}\right\rvert_{\partial\Omega} = 0, \\
(\div \mathbf{u}, q)_{L^2(\Omega)} & = -(f, q)_{\Ltwo} \quad\mbox{ for all }\; q \in L^2(\Omega).    
\end{split}
\end{align}
An application of the Brezzi-splitting \cite{boffi2013mixed} reveals that this mixed problem has a unique solution, and, in fact, the solution pair is given by $(\mathbf{u},p) = (\mathbf{A}\nabla p,p)$ with $p\in H^1(\Omega)$ being the unique weak solution to \eqref{elliptic-equation-strong-form}--\eqref{p int constr}; see Section \ref{Sec: Cts prob}. 

The main objective of our work is the construction and rigorous error analysis of a practical finite element scheme based on the methodology of localized orthogonal decomposition for the accurate numerical approximation of the multiscale problem \eqref{eqn-mixed-form-var-intro}, which is also referred to as numerical homogenization. The underlying coarse discretization uses Raviart--Thomas finite element spaces \cite{RT77}. 

To date, there are a wide variety of methodologies for the construction of finite element based numerical homogenization schemes, among the most popular being the multiscale finite element method (MsFEM) \cite{HW97,EH09} and its generalizations (GMsFEM) \cite{ChEfHo23,EfGaHo13}, the heterogeneous multiscale method (HMM) \cite{EE03,AEE12}, the variational multiscale method (VMS) \cite{Hug95,HFM98}, the multiscale spectral generalized finite element method (MS-GFEM) \cite{BaLi11,BaLiSiSt20,MaSc22,MaScDo22}, and the localized orthogonal decomposition (LOD) \cite{MP14,MP21,LODActa21}. This work is following the methodology of the LOD which has been the subject of extensive research in the last decade; see, e.g., \cite{HeP13,KPY18,gallistl2018numerical,Maier21,MaVe22,Ver22,HP23,FGP24,DHW24} and the references therein for some recent developments. 

Regarding mixed finite element methods for multiscale problems involving heterogeneous coefficients, MsFEM- and GMsFEM-type schemes utilizing Raviart--Thomas finite elements have been suggested in \cite{Aar04,AB06,CH03,ChEfLe15,CCE22,WaChZh21}, homogenization-based schemes have been studied in \cite{Arb11} for divergence-form equations and in \cite{GSS21} for nondivergence-form equations and VMS-type schemes have been discussed in \cite{Arb04,LM09,Mal11}. In a recent article \cite{AMS25}, a MS-GFEM realization for the mixed problem \eqref{eqn-mixed-form-var-intro} is provided based on optimal local approximation spaces obtained by solving local eigenvalue problems. Finally, LOD-type schemes for mixed problems have been proposed in \cite{hellman2016multiscale,HL24} for Darcy and Stokes problems, respectively. 

The main goal of this paper is to generalize the previous work \cite{hellman2016multiscale} in various ways. The main issue faced in the original work is that the LOD approximation space, used in practical calculations, is not based on a stable decomposition. To be precise, the LOD space is obtained by enriching Raviart--Thomas finite element functions on a coarse mesh by divergence-free functions that are in the kernel of the nodal Raviart--Thomas interpolation $\operatorname{I}_H$. However, this interpolation is not stable on $\Hdiv$ which in turn implies that the kernel of $\operatorname{I}_H$ is not closed in $\Hdiv$. This issue is formally resolved in the analysis of \cite{hellman2016multiscale} by replacing the exact solution space $\Hdiv$ by a finite-dimensional approximation space on a sufficiently fine mesh. However, this in turn results in a logarithmic pollution factor (depending on the coarse-to-fine mesh size ratio) in the error estimates, which causes a blow-up to infinity if the fine mesh size tends to zero. Hence, the numerical method becomes logarithmically instable on fine meshes, which was also confirmed numerically. Furthermore, the use of the Raviart--Thomas interpolation and the specific proof technique in \cite{hellman2016multiscale} to quantify the localization error restricted the analysis to dimension $d=2$. Our new approach not only eliminates the pollution factor via a stable LOD in Raviart--Thomas spaces, but it is also valid for dimensions $d\in\{2,3\}$. Finally, we provide a construction of arbitrary polynomial order, whereas the original construction was restricted to lowest-order Raviart--Thomas elements. With this, we also generalize the concept of higher-order LOD methods \cite{Maier21,DoHaMa23} to a new problem class.

Let us briefly outline the main ideas of this paper. As a first step, we introduce the (low-dimensional) coarse-scale space $V_H^k :=  \mathcal{RT}_{\hspace{-1pt}k}(\mathcal{T}_{H}) \cap \Hzerodiv$ using the Raviart--Thomas finite element space of order $k\in\mathbb{N}_0$ for the velocity, and the coarse-scale space $Q_H^k$ consisting of $\mathcal{T}_H$-piecewise polynomials of degree at most $k$ for the pressure. In the spirit of the LOD methodology, we decompose the solution space $\Hzerodivomega{\Omega}:=\left\{\boldsymbol{u} \in \Hdivomega{\Omega}: \left.\boldsymbol{u} \cdot \mathbf{n}\right\rvert_{\partial\Omega}=0\right\}$ for the velocity into
\begin{align}\label{orig dec intro}
    \Hzerodivomega{\Omega} = V_H^k \oplus W, 
\end{align}
where $W := \operatorname{ker}\left(\pi_H\right)$ is a fine-scale space, or detail space, defined via a (computable and local) stable quasi-interpolation operator $\pi_H: \Hzerodiv\rightarrow V_H^k$ that is a projection and satisfies the commuting property $\div\circ\, \pi_H = P_H\circ \div$, with $P_H:L^2(\Omega)\rightarrow Q_H^k$ denoting the $L^2$-orthogonal projection onto $Q_H^k$. A possible choice for such a mapping $\pi_H$ is given in \cite{ern2022equivalence}. See Sections \ref{Sec: RT} and \ref{Sec: interp} for more details.

As a second step, we use \eqref{orig dec intro} to construct a modified decomposition
\begin{align}\label{modified dec intro}
    \Hzerodivomega{\Omega} = V_H^{k,\mathrm{ms}} \oplus W,\qquad V_H^{k,\mathrm{ms}} := (\mathrm{id}-\mathcal{C})V_H^k,
\end{align}
where $\mathcal{C}:\Hzerodiv\rightarrow \Wdivzero := \{\mathbf{w}\in W:\div \mathbf{w}=0\}$ is a linear correction operator that uses fine-scale information from the coefficient $\mathbf{A}$ to enrich the coarse-scale space $V_H^k$ to an ideal multiscale space $V_H^{k,\mathrm{ms}}$ of the same dimension, and such that we have the crucial orthogonality property
\begin{align*}
    \left(\mathbf{A}^{-1}\mathbf{v}_H^{\mathrm{ms}},\mathbf{w}\right)_{L^2(\Omega)} = 0\qquad\mathrm{for}\; \mathrm{all}\;\; \mathbf{v}_H^{\mathrm{ms}} \in \VHkms, \; \mathbf{w} \in \Wdivzero.
\end{align*}
It is important to note that adding an element in $\Wdivzero$ to a given function does not change its coarse-scale behavior or its divergence. See Section \ref{Sec: Corr op and ideal space} for more details.

With the modified decomposition \eqref{modified dec intro} at hand, the ideal numerical homogenization scheme seeks a pair $(\mathbf{u}_H^{\mathrm{ms}},p_H)\in \VHkms\times Q_H^k$ with $\int_\Omega p_H\,\mathrm{d}x = 0$, such that
\begin{align}\label{ideal method intro}
\begin{split}
\left(\mathbf{A}^{-1} \mathbf{u}_H^{\mathrm{ms}}, \mathbf{v}_H^{\mathrm{ms}}\right)_{L^2(\Omega)}+(\div \mathbf{v}_H^{\mathrm{ms}}, p_H)_{L^2(\Omega)} & =0 \qquad\qquad\qquad\;\;\, \mbox{for all }\; \mathbf{v}_H^{\mathrm{ms}} \in \VHkms, \\
(\div \mathbf{u}_H^{\mathrm{ms}}, q_H)_{L^2(\Omega)} & = -(f, q_H)_{\Ltwo} \quad\mbox{ for all }\; q_H \in Q_H^k.    
\end{split}
\end{align}
A rigorous error analysis for the ideal scheme can be found in Section \ref{Sec: error ideal}.

Finally, since the ideal method, as the name suggests, is not yet practical due to the corrector problems involved in the definition of $\mathcal{C}$ being global problems, we construct a practical numerical homogenization scheme by localizing the corrector problems to small regions while preserving their approximation quality. The resulting method is stated and rigorously analyzed in Section \ref{Sec: localized}.

This paper is structured as follows. 

In Section \ref{Sec: prelim}, we discuss some preliminaries. After introducing notations used throughout this work (Section \ref{Sec: notation}), we review the well-posedness of the continuous problem \eqref{eqn-mixed-form-var-intro} (Section \ref{Sec: Cts prob}), its coarse discretization with the Raviart--Thomas finite element (Section \ref{Sec: RT}), and we state the framework and an explicit example for the stable quasi-interpolation operator $\pi_H$ (Section \ref{Sec: interp}). 

In Section \ref{Sec: Ideal}, we state and rigorously analyze the ideal multiscale method. The correction operator $\mathcal{C}$ and the ideal approximation space $V_H^{k,\mathrm{ms}}$ are constructed in Section \ref{Sec: Corr op and ideal space}, and the error analysis of the ideal method \eqref{ideal method intro} is given in \ref{Sec: error ideal}.

In Section \ref{Sec: localized}, we state and rigorously analyze the localized multiscale method, that is, the practical numerical homogenization scheme for the approximation of the multiscale problem \eqref{eqn-mixed-form-var-intro}. The localized version $\mathcal{C}^m$ of the correction operator $\mathcal{C}$ and the localized approximation space $\VHkmsloc$ are constructed in Section \ref{Sec: loc mult app}, a crucial exponential decay estimate for the localization error for dimension $d = 3$ is given in Section \ref{Sec: exp decay 3d}, and the error analysis of the localized multiscale method is given in Section \ref{sec-proof-main-thm}. 

In Section \ref{Sec: num exp}, we provide various numerical experiments that illustrate the theoretical results.

Finally, the appendix includes a proof of the well-known inf-sup stability in classical Raviart--Thomas spaces (Appendix \ref{appendix:A}), and a proof of the exponential decay estimate for the localization error for dimension $d = 2$ (Appendix \ref{appendix:section:decay-2D}).

\section{Preliminaries}\label{Sec: prelim}

\subsection{Notations}\label{Sec: notation}
We consider a bounded Lipschitz domain $\Omega \subset \mathbb{R}^d$ in dimension $d \in \{ 2,3\}$ with a piecewise polygonal boundary $\partial \Omega$. 
For any subdomain $\omega \subset \Omega$ with a Lipschitz boundary, we let $\mathbf{n}_{\omega}$ denote the outward unit normal vector on $\partial \omega$.

In the following, we use standard notation for Lebesgue and Sobolev spaces. In particular, for scalar functions $u$ and $v$, the $L^2$-inner product over $\omega$ is defined as $(u, v)_{L^2(\omega)}:=\int_\omega u v \dmu$. For $d$-dimensional vector-valued functions $\mathbf{u}$ and $\mathbf{v}$, we use the same notation for the $L^2$-inner product, but the definition formally changes to $(\mathbf{u}, \mathbf{v})_{L^2(\omega)}:=$ $\int_\omega \mathbf{u} \cdot \mathbf{v} \hspace{1pt}\dmu$. Similarly, the same notation is used for $L^2$-norms of scalar- and vector-valued functions, as the context makes the distinction clear. Vector-valued quantities, however, are denoted using boldface symbols. 

We denote the space of zero-mean $L^2$-functions by 
\begin{align*}
    L_0^2(\Omega):=\left\{ q \in L^2(\Omega): \int_{\Omega} q\, \dmu=0\right\}.
\end{align*}
Further, we introduce the following spaces of functions with a weak divergence on subdomains $\omega\subset \Omega$:
\begin{align*}
\Hdivomega{\omega}&:=\left\{\boldsymbol{u} \in L^2(\omega,\mathbb{R}^d): \div \boldsymbol{u} \in L^2(\omega)\right\}, \quad 
\Hzerodivomega{\omega}:=\left\{\boldsymbol{u} \in \Hdivomega{\omega}: \left.\boldsymbol{u} \cdot \mathbf{n}_\omega\right\rvert_{\partial\omega}=0\right\}, \\
\Hdivzeroomega{\omega}&:=\{\boldsymbol{u} \in \Hdivomega{\omega}: \div \boldsymbol{u}=0\}, 
\hspace{26pt}
 \Hzerodivzeroomega{\omega}:= \Hdivzeroomega{\omega} \cap \Hzerodivomega{\omega}. 
\end{align*}
The above spaces are equipped with the inner product $(\cdot,\cdot)_{\Hdivomega{\omega}}$ and the norm $\|\cdot\|_{\Hdivomega{\omega}}$ given by
$$
(\boldsymbol{u}, \boldsymbol{v})_{\Hdivomega{\omega}}:=(\boldsymbol{u}, \boldsymbol{v})_{L^2(\omega)}+(\div \boldsymbol{u}, \div \boldsymbol{v})_{L^2(\omega)},\qquad \|\boldsymbol{u}\|_{\Hdivomega{\omega}} := \sqrt{(\boldsymbol{u}, \boldsymbol{u})_{\Hdivomega{\omega}}}.
$$ 
The continuous dual of a Banach space $X$ is denoted by $X^{\prime}$.

Throughout this paper, the notation $a \lesssim b$ indicates that $a \leq C \, b$, where $C>0$ is a generic constant that can depend on $d$, $\Omega$, the lower and upper spectral bounds of $\mathbf{A}$, and the regularity constants of the meshes, but does not depend on the mesh size $H$ itself. In particular, $C$ does not depend on the potentially rapid oscillations in $\mathbf{A}$ or its regularity.

\subsection{Continuous problem}\label{Sec: Cts prob}

We consider the Neumann problem \eqref{elliptic-equation-strong-form} rewritten in mixed form as follows:
\begin{align}\label{eqn-mixed-form}
\begin{split}
\mathbf{A}^{-1} \mathbf{u}-\nabla p & =0 \qquad \text { in } \Omega, \\
\div \mathbf{u} & =-f \quad\,\text { in } \Omega, \\
\mathbf{u} \cdot \mathbf{n} & =0 \qquad \text { on } \partial \Omega.
\end{split}
\end{align}
We assume that $\mathbf{A} \in L^{\infty}(\Omega,\mathbb{R}^{d\times d})$ is a diffusion coefficient, possibly with rapid fine scale variations. Its value is an almost everywhere symmetric positive definite matrix, and we assume uniform ellipticity in the sense that there exist real numbers $\alpha$ and $\beta$ such that for almost every $x\in \Omega$ and for every $\boldsymbol{\xi} \in \mathbb{R}^d \backslash\{0\}$ it holds
$$
0<\alpha \leq \frac{\left(\mathbf{A}(x)^{-1} \boldsymbol{\xi} \right) \cdot \boldsymbol{\xi} }{\boldsymbol{\xi} \cdot \boldsymbol{\xi}} \leq \beta<\infty.
$$
Further, we suppose that $f \in L^2(\Omega)$ satisfies the compatibility condition
\begin{equation}\label{comp cond}
\int_{\Omega} f \,\dmu=0,
\end{equation}
i.e., $f\in L^2_0(\Omega)$. Note that the compatibility condition \eqref{comp cond} is necessary for the existence of a function $\mathbf{u}\in \Hzerodiv$ satisfying $\div \mathbf{u} = -f$ since then, by the divergence theorem,
\begin{align*}
\int_{\Omega} f \,\dmu = -\int_{\Omega}\div \mathbf{u} \,\dmu = -\int_{\partial\Omega}  \mathbf{u} \cdot \mathbf{n}\,\mbox{d}s = 0,
\end{align*}
which corresponds to the conservation of mass.
In order to state the variational (mixed) formulation of \eqref{eqn-mixed-form}, we introduce the following two bilinear forms:
\begin{align*}
\begin{split}
    a: \Hzerodiv \times \Hzerodiv \rightarrow \mathbb{R},\qquad a(\mathbf{u}, \mathbf{v})&:=\left(\mathbf{A}^{-1} \mathbf{u}, \mathbf{v}\right)_{L^2(\Omega)},\\
    b: \Hzerodiv \times L^2_0(\Omega)\;\;\; \rightarrow \mathbb{R},\qquad \;b(\mathbf{v}, q)&:=(\div \mathbf{v}, q)_{L^2(\Omega)}.
    \end{split}
\end{align*}
With this, we seek the velocity $\mathbf{u} \in \Hzerodiv$ and the pressure $p \in L_0^2(\Omega)$ such that
\begin{align}\label{eqn-mixed-form-var}
\begin{split}
a(\mathbf{u}, \mathbf{v})+b(\mathbf{v}, p) & =0 \hspace{70pt} \mbox{for all }\; \mathbf{v} \in \Hzerodiv, \\
b(\mathbf{u}, q) & = -(f, q)_{\Ltwo} \hspace{23pt}\mbox{for all }\; q \in L^2_0(\Omega).
\end{split}
\end{align}
Note that due to the compatibility condition \eqref{comp cond} and the fact that $\div \mathbf{u}\in L^2_0(\Omega)$ when $\mathbf{u}\in \Hzerodiv$, we can equivalently replace the test function space $L^2_0(\Omega)$ in the second equation of \eqref{eqn-mixed-form-var} by $L^2(\Omega)$.

For our error analysis, we also require the energy norm induced by $a(\cdot,\cdot)$, which we denote by  
$$
\vertiii{\mathbf{v}}:=\|\mathbf{A}^{-1/2} \mathbf{v}\|_{L^2(\Omega)}= \sqrt{a(\mathbf{v}, \mathbf{v})},\qquad \mathbf{v}\in \Hzerodiv,
$$ 
as well as its local version $\vertiii{\mathbf{v}}_{\omega} := \|A^{-1/2}\mathbf{v}\|_{L^2(\omega)}$ for any subdomain $\omega\subset \Omega$. We conclude this subsection with a classical well-posedness result, which guarantees the existence of a unique solution $(\mathbf{u}, p) \in \Hzerodiv\times L_0^2(\Omega)$ to the mixed problem \eqref{eqn-mixed-form-var}, as well as the well-posedness of all the discrete problems that follow later in this paper. In the latter case, the lemma below is applied with finite-dimensional subspaces $\mathcal{V} \subset \Hzerodiv$ and $\mathcal{Q} \subset L_0^2(\Omega)$. A proof of the following lemma can be found, e.g., in \cite[Theorem 4.2.3]{boffi2013mixed}.

\begin{lemma}[Well-posedness of the mixed formulation]\label{lem-inf-sup}
Let $\mathcal{V} \subset \Hzerodiv$ and $\mathcal{Q} \subset L^2_0(\Omega)$ denote respective closed subspaces. Introducing $\mathcal{V}_{\operatorname{div}\hspace{-1pt}0}:=\{\mathbf{v} \in \mathcal{V}: b(\mathbf{v}, q)=0 \;\;\mathrm{for}\; \mathrm{all}\;  q \in \mathcal{Q}\},$
suppose that
\begin{itemize}
    \item[$\mathrm{(A1)}$] $a(\cdot,\cdot)$ is coercive on $\mathcal{V}_{\operatorname{div}\hspace{-1pt}0}$ with constant $\tilde{\alpha}>0$, i.e., $a(\mathbf{v}, \mathbf{v}) \geq \tilde{\alpha}\|\mathbf{v}\|_{\Hdiv}^2$ for all $\mathbf{v} \in \mathcal{V}_{\operatorname{div}\hspace{-1pt}0}$,
    \item[$\mathrm{(A2)}$] $a(\cdot,\cdot)$ is bounded with constant $\tilde{\beta}>0$, i.e., $|a(\mathbf{v}, \mathbf{w})| \leq \tilde{\beta}\|\mathbf{v}\|_{\Hdiv}\|\mathbf{w}\|_{\Hdiv}$ for all $\mathbf{v}, \mathbf{w} \in \mathcal{V}$,
    \item[$\mathrm{(A3)}$] $b(\cdot,\cdot)$ is bounded, i.e., $\lvert b(\mathbf{v}, q)\rvert \lesssim \|\mathbf{v}\|_{\Hdiv}\|q\|_{\Ltwo}$ for all $\mathbf{v} \in \mathcal{V}$ and $q\in\mathcal{Q}$, and
    \item[$\mathrm{(A4)}$] $b(\cdot,\cdot)$ is inf-sup stable with constant $\tilde{\gamma}>0$, i.e., $\inf\limits _{q \in \mathcal{Q}\backslash\{0\}} \sup\limits_{\mathbf{v} \in \mathcal{V}\backslash\{0\}} \frac{b(\mathbf{v}, q)}{\|\mathbf{v}\|_{\Hdiv}\|q\|_{\Ltwo}} \geq \tilde{\gamma}$.
\end{itemize}
Then, there exists a unique $(\mathbf{u}_{\ast}, p_{\ast}) \in \mathcal{V} \times \mathcal{Q}$ such that 
\begin{align*}
a(\mathbf{u}_{\ast}, \mathbf{v})+b(\mathbf{v}, p_{\ast}) & =0 \hspace{70pt} \mathrm{for}\; \mathrm{all}\;\; \mathbf{v} \in \mathcal{V}, \\
b(\mathbf{u}_{\ast}, q) & = -(f, q)_{\Ltwo} \hspace{23pt}\mathrm{for}\; \mathrm{all}\;\; q \in \mathcal{Q}, 
\end{align*}
and we have the stability bounds $\|\mathbf{u}_{\ast} \|_{\Hdiv} \leq \frac{2}{ \tilde{\gamma}}\sqrt{\frac{\tilde \beta}{\tilde \alpha}}\,\|f\|_{\Ltwo}$ and $\|p_{\ast} \|_{\Ltwo} \leq \frac{\tilde{\beta}}{\tilde{\gamma}^2}\|f\|_{\Ltwo}$.
\end{lemma}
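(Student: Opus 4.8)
The statement is the classical Brezzi saddle-point theorem, and the plan is to follow the standard three-step argument: produce a divergence-constrained particular velocity from the inf-sup condition, correct it inside the kernel $\mathcal{K}:=\mathcal{V}_{\operatorname{div}\hspace{-1pt}0}$ by Lax--Milgram, and then recover the pressure as the associated multiplier. Write $B\colon\mathcal{V}\to\mathcal{Q}'$, $\langle B\mathbf{v},q\rangle:=b(\mathbf{v},q)$, which is bounded by (A3), so that $\mathcal{K}=\ker B$ is closed and hence a Hilbert space; identify $\mathcal{Q}'$ with $\mathcal{Q}$ via the $L^2$-Riesz map, under which the load $q\mapsto-(f,q)_{\Ltwo}$ becomes an element $\ell\in\mathcal{Q}'$ with $\|\ell\|_{\mathcal{Q}'}\le\|f\|_{\Ltwo}$ (Cauchy--Schwarz in $L^2$). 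Condition (A4) says exactly that the adjoint $B'\colon\mathcal{Q}\to\mathcal{V}'$ is bounded below by $\tilde{\gamma}$; by the closed range theorem (the standard characterization of inf-sup stability in terms of a bounded right inverse of $B$) this makes $B$ surjective and its restriction to $\mathcal{K}^{\perp}$, the orthogonal complement in $\Hdiv$, an isomorphism onto $\mathcal{Q}'$ whose inverse has norm at most $\tilde{\gamma}^{-1}$. I would therefore fix $\mathbf{u}_0\in\mathcal{K}^{\perp}$ with $b(\mathbf{u}_0,q)=\ell(q)$ for all $q\in\mathcal{Q}$ and $\|\mathbf{u}_0\|_{\Hdiv}\le\tilde{\gamma}^{-1}\|f\|_{\Ltwo}$.

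Next, by (A1) and (A2) the form $a(\cdot,\cdot)$ is coercive and bounded on the Hilbert space $\mathcal{K}$, so Lax--Milgram yields a unique $\mathbf{z}\in\mathcal{K}$ with $a(\mathbf{z},\mathbf{v})=-a(\mathbf{u}_0,\mathbf{v})$ for all $\mathbf{v}\in\mathcal{K}$. Set $\mathbf{u}_{\ast}:=\mathbf{u}_0+\mathbf{z}$. Since $\mathbf{z}\in\mathcal{K}$, the second equation $b(\mathbf{u}_{\ast},q)=-(f,q)_{\Ltwo}$ holds for all $q\in\mathcal{Q}$, and by construction $a(\mathbf{u}_{\ast},\mathbf{v})=0$ for all $\mathbf{v}\in\mathcal{K}$; hence the functional $\mathbf{v}\mapsto-a(\mathbf{u}_{\ast},\mathbf{v})$ annihilates $\mathcal{K}=\ker B$, so it lies in the annihilator of $\ker B$, which coincides with the (closed) range of $B'$. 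Thus there is a unique $p_{\ast}\in\mathcal{Q}$ with $b(\mathbf{v},p_{\ast})=-a(\mathbf{u}_{\ast},\mathbf{v})$ for all $\mathbf{v}\in\mathcal{V}$, and $(\mathbf{u}_{\ast},p_{\ast})$ solves the system. Uniqueness is immediate: a homogeneous solution $(\mathbf{u},p)$ has $\mathbf{u}\in\mathcal{K}$ by the second equation, hence $a(\mathbf{u},\mathbf{u})=-b(\mathbf{u},p)=0$ and $\mathbf{u}=0$ by (A1), and then $b(\mathbf{v},p)=0$ for all $\mathbf{v}$ forces $p=0$ by (A4).

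For the quantitative bounds as stated I would additionally use that $a(\cdot,\cdot)$ is symmetric and positive semidefinite, as it is in every application of this lemma in the paper (where $a(\mathbf{v},\mathbf{v})=\vertiii{\mathbf{v}}^2$): then $\sqrt{a(\cdot,\cdot)}$ is a seminorm obeying the Cauchy--Schwarz inequality, and $\mathbf{u}_{\ast}$ is $a$-orthogonal to $\mathcal{K}$. From $\mathbf{z}=\mathbf{u}_{\ast}-\mathbf{u}_0$ and $a(\mathbf{u}_{\ast},\mathbf{z})=0$ one gets $a(\mathbf{z},\mathbf{z})=-a(\mathbf{u}_0,\mathbf{z})\le\sqrt{a(\mathbf{u}_0,\mathbf{u}_0)}\,\sqrt{a(\mathbf{z},\mathbf{z})}$ and $a(\mathbf{u}_{\ast},\mathbf{u}_{\ast})=a(\mathbf{u}_{\ast},\mathbf{u}_0)\le\sqrt{a(\mathbf{u}_{\ast},\mathbf{u}_{\ast})}\,\sqrt{a(\mathbf{u}_0,\mathbf{u}_0)}$, hence $\sqrt{a(\mathbf{z},\mathbf{z})}\le\sqrt{a(\mathbf{u}_0,\mathbf{u}_0)}$ and $\sqrt{a(\mathbf{u}_{\ast},\mathbf{u}_{\ast})}\le\sqrt{a(\mathbf{u}_0,\mathbf{u}_0)}$. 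Converting with (A1) on $\mathcal{K}$ and (A2) gives $\|\mathbf{z}\|_{\Hdiv}\le\tilde{\alpha}^{-1/2}\sqrt{a(\mathbf{z},\mathbf{z})}\le(\tilde{\beta}/\tilde{\alpha})^{1/2}\|\mathbf{u}_0\|_{\Hdiv}$, so by the triangle inequality and $\tilde{\alpha}\le\tilde{\beta}$, $\|\mathbf{u}_{\ast}\|_{\Hdiv}\le(1+(\tilde{\beta}/\tilde{\alpha})^{1/2})\|\mathbf{u}_0\|_{\Hdiv}\le\tfrac{2}{\tilde{\gamma}}\sqrt{\tilde{\beta}/\tilde{\alpha}}\,\|f\|_{\Ltwo}$. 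For the pressure, (A4) applied to $p_{\ast}$ together with $b(\mathbf{v},p_{\ast})=-a(\mathbf{u}_{\ast},\mathbf{v})$, Cauchy--Schwarz for $a$, and (A2) yields $\|p_{\ast}\|_{\Ltwo}\le\tilde{\gamma}^{-1}\sqrt{\tilde{\beta}}\,\sqrt{a(\mathbf{u}_{\ast},\mathbf{u}_{\ast})}\le\tilde{\gamma}^{-1}\tilde{\beta}\,\|\mathbf{u}_0\|_{\Hdiv}\le\tfrac{\tilde{\beta}}{\tilde{\gamma}^2}\|f\|_{\Ltwo}$.

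The existence/uniqueness part is the textbook Brezzi argument, so the real obstacle is hitting exactly the constants $\tfrac{2}{\tilde{\gamma}}\sqrt{\tilde{\beta}/\tilde{\alpha}}$ and $\tfrac{\tilde{\beta}}{\tilde{\gamma}^2}$: the naive Galerkin estimate $\|\mathbf{z}\|_{\Hdiv}\lesssim(\tilde{\beta}/\tilde{\alpha})\|\mathbf{u}_0\|_{\Hdiv}$ only produces $\tilde{\beta}/\tilde{\alpha}$ factors, and one must instead measure the kernel correction in the energy (semi)norm $\sqrt{a(\cdot,\cdot)}$ — where it is non-expansive precisely because $\mathbf{u}_{\ast}$ is $a$-orthogonal to $\mathcal{K}$ — and convert back to the $\Hdiv$-norm only at the end, which is the step that uses the symmetry of $a$. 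A secondary technical point is the clean passage from (A4) to a bounded right inverse of $B$ via the closed range theorem, which sidesteps any explicit construction of a Fortin operator.
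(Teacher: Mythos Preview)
The paper does not supply its own proof of this lemma; it simply refers to \cite[Theorem 4.2.3]{boffi2013mixed}. Your argument is the standard Brezzi splitting (lift the constraint via a bounded right inverse of $B$ coming from the inf-sup condition, correct inside the kernel by Lax--Milgram, recover the multiplier via the closed range theorem), and it is correct for existence and uniqueness.

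Your handling of the quantitative constants is also right, and you identify the genuine subtlety: the naive estimate $\|\mathbf{z}\|_{\Hdiv}\le(\tilde\beta/\tilde\alpha)\|\mathbf{u}_0\|_{\Hdiv}$ only yields $\|\mathbf{u}_\ast\|_{\Hdiv}\le\tfrac{1}{\tilde\gamma}(1+\tilde\beta/\tilde\alpha)\|f\|_{\Ltwo}$ and correspondingly a worse pressure bound, whereas the sharper constants $\tfrac{2}{\tilde\gamma}\sqrt{\tilde\beta/\tilde\alpha}$ and $\tilde\beta/\tilde\gamma^2$ stated here come from measuring the kernel correction in the energy seminorm $\sqrt{a(\cdot,\cdot)}$ and exploiting the $a$-orthogonality of $\mathbf{u}_\ast$ to $\mathcal{K}$, which needs $a$ to be symmetric and positive semidefinite. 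This extra hypothesis is not listed among (A1)--(A4) but is satisfied in every use of the lemma in the paper, since there $a(\mathbf{v},\mathbf{w})=(\mathbf{A}^{-1}\mathbf{v},\mathbf{w})_{L^2(\Omega)}$ with symmetric $\mathbf{A}$; so your observation that the stated constants are really those of the symmetric case is a fair reading of the lemma as used.
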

For the choice $(\mathcal{V},\mathcal{Q}) = (\Hzerodiv, L^2_0(\Omega))$, it is quickly checked that (A1)--(A3) hold with $\tilde \alpha = \alpha$ and $\tilde \beta = \beta$. To see (A4), note that for any $q\in L^2_0(\Omega)\backslash\{0\}$ there exists a unique $\varphi_q\in H^1(\Omega)\cap L^2_0(\Omega)$ such that $(\nabla \varphi_q,\nabla \psi)_{L^2(\Omega)} = -(q,\psi)_{L^2(\Omega)}$ for all $\psi\in H^1(\Omega)$. Observing that $\mathbf{v}_q :=\nabla \varphi_q\in \Hzerodiv \backslash\{\mathbf{0}\}$ with $\,\div \mathbf{v}_q = q$, we find that
\begin{align*}
    \sup_{\mathbf{v} \in \Hzerodiv\backslash\{\mathbf{0}\}} \frac{b(\mathbf{v}, q)}{\|\mathbf{v}\|_{\Hdiv}} \geq  \frac{b(\mathbf{v}_q, q)}{\|\mathbf{v}_q\|_{\Hdiv}} = \frac{\|q\|_{L^2(\Omega)}^2}{\|\mathbf{v}_q\|_{\Hdiv}}\geq \left(1+C_{\Omega}^2\right)^{-\frac{1}{2}}\|q\|_{\Ltwo}
\end{align*}
for any $q\in L^2_0(\Omega)\backslash\{0\}$, where we have used the fact that $\|\div \mathbf{v}_q\|_{L^2(\Omega)} = \|q\|_{L^2(\Omega)}$ and the bound $\|\mathbf{v}_q\|_{L^2(\Omega)} = - \frac{(q,\varphi_q)_{L^2(\Omega)}}{\|\mathbf{v}_q\|_{L^2(\Omega)}} \leq C_{\Omega}\|q\|_{L^2(\Omega)}$ with $C_{\Omega}> 0$ denoting the optimal constant for the Poincar\'{e}--Wirtinger inequality
\begin{align}\label{P-Wirtinger}
    \|\varphi\|_{L^2(\Omega)}\leq C_{\Omega} \|\nabla \varphi\|_{L^2(\Omega)}\qquad \mathrm{for}\; \mathrm{all}\;\; \varphi\in H^1(\Omega)\cap L^2_0(\Omega).
\end{align}
We summarize these observations and their consequences in a short remark.
\begin{remark}
For $(\mathcal{V},\mathcal{Q}) = (\Hzerodiv, L^2_0(\Omega))$, we have that the assumptions $\mathrm{(A1)}$--$\mathrm{(A4)}$ of Lemma \ref{lem-inf-sup} are satisfied with $(\tilde \alpha,\tilde \beta,\tilde \gamma) = (\alpha,\beta,(1+C_{\Omega}^{2})^{-\frac{1}{2}})$. Hence, by Lemma \ref{lem-inf-sup}, there exists a unique solution $(\mathbf{u}, p) \in \Hzerodiv \times L^2_0(\Omega)$ to \eqref{eqn-mixed-form-var}, and we have the stability bounds
\begin{align}\label{u stabbd}
    \|\mathbf{u}\|_{\Hdiv} \leq 2\sqrt{1+C_{\Omega}^2}\sqrt{\frac{\beta}{\alpha}} \,\|f\|_{L^2(\Omega)},\qquad \|p\|_{L^2(\Omega)}\leq (1+C_{\Omega}^2)\beta\|f\|_{L^2(\Omega)}.
\end{align}
In particular, the stability constants are independent of the variations/regularity of $\mathbf{A}$, which is crucial for multiscale problems.
\end{remark}

It is also quickly seen that for the unique solution $(\mathbf{u}, p) \in \Hzerodiv \times L^2_0(\Omega)$ to the mixed formulation \eqref{eqn-mixed-form-var}, we even have that
\begin{align*}
    p \in H^1(\Omega),\qquad \nabla p=\mathbf{A}^{-1} \mathbf{u}.
\end{align*}
Indeed, this follows from the existence and uniqueness of a solution $p \in H^1(\Omega) \cap L^2_0(\Omega)$ to the Neumann problem $(\mathbf{A} \nabla p ,\nabla v )_{\Ltwo} = (f,v)_{\Ltwo}$ for all $v\in H^1(\Omega)$. Setting $\mathbf{u}=\mathbf{A} \nabla p$, we see that $\mathbf{u}$ has a (weak) divergence in $L^2(\Omega)$, namely \,$\div \mathbf{u} = -f$, which in turn guarantees the existence of a normal trace $\mathbf{u} \cdot \mathbf{n}\in L^2(\partial\Omega)$. The normal trace vanishes since $f$ has zero mean. Hence, we see that $(\mathbf{u},p) \in \Hzerodiv \times (H^1(\Omega) \cap L^2_0(\Omega))$ solves \eqref{eqn-mixed-form-var}, and uniqueness concludes the argument. In particular, we have that
\begin{align}
\label{H1-estimate-pressure}
\| \nabla p \|_{\Ltwo} \,\,=\,\, \| \mathbf{A}^{-1} \mathbf{u} \|_{\Ltwo} \,\,\le\,\, \beta\, \| \mathbf{u} \|_{\Ltwo}  \,\,\leq\,\, C \| f\|_{\Ltwo}
\end{align}
for some constant $C=C(\alpha,\beta,C_{\Omega})>0$ independent of the multiscale variations of $\mathbf{A}$.

\subsection{Discretization with the Raviart--Thomas element}\label{Sec: RT}

The basis for our multiscale discretization is an underlying coarse discretization based on finite elements of Raviart--Thomas-type. For that, we consider a conforming simplicial mesh over $\Omega$, denoted by $\left\{\mathcal{T}_H\right\}$, where $H:=\underset{{T \in \mathcal{T}_H}}{\max} H_T$ and $H_T := \operatorname{diam}(T)$. 
Note that we think of $\mathcal{T}_H$ as a coarse mesh that does not necessarily resolve the variations of the multiscale coefficient $\mathbf{A}$. Throughout, we utilize $T$ to represent elements of $\mathcal{T}_H$ and $E$ to denote edges (for $d=2$) or faces (for $d=3$) of the elements of $\mathcal{T}_H$, with $\mathbf{n}_E$ denoting the outward normal vectors on edges and faces, respectively (where we silently assign a fixed global orientation/sign to each $\mathbf{n}_E$). 
The corresponding set of all such $(d-1)$-dimensional subsimplices, i.e., all edges for $d=2$ or all faces for $d=3$, is denoted by $\mathcal{E}_H$.
We also make the following assumptions on the coarse mesh. 
\begin{assumption}[Assumptions on mesh]\label{assu-mesh}\hfill
\begin{enumerate}
\item The mesh $\mathcal{T}_H$ is quasi-uniform in the sense that there exists a generic constant $\cuniformity > 0$ such that $\underset{{T \in \mathcal{T}_{H}}}{\min} H_T \geq \cuniformity H$.
\item The mesh $\mathcal{T}_H$ is shape-regular. In particular, we write $\cshape:=\underset{{T \in \mathcal{T}_{H}}}{\max} \frac{H_T}{\operatorname{diam}(B_T)} > 1$ to denote the shape-regularity constant, where $B_T$ denotes the largest ball contained in $T \in \mathcal{T}_{H}$.
\item The elements of $\mathcal{T}_H$ are (closed) simplices and are such that any two distinct elements $T, T^{\prime} \in \mathcal{T}_H$ are either disjoint or share a common vertex, edge, or face.
\end{enumerate}
\end{assumption}

We denote the space of all polynomials of degree at most $k$ on a subdomain $\omega\subset \Omega$ by $\mathbb{P}_k(\omega)$, and the space of $d$-dimensional vector-valued polynomials by $[\mathbb{P}_k(\omega)]^d$. With this we introduce the $\Hdiv$-conforming Raviart--Thomas finite element of order $k\in \mathbb{N}_{0}$. For each element $T \in \mathcal{T}_H$, the space of Raviart--Thomas shape functions is defined as:
$$
\begin{aligned}
\mathcal{RT}_{\hspace{-1pt}k}(T)  := \left[\mathbb{P}_k(T)\right]^d+\boldsymbol{x} \, \mathbb{P}_k(T),
\end{aligned}
$$
where $\boldsymbol{x}=\left(x_1, \ldots, x_d\right)$ is the spatial coordinate vector. 
We define the piecewise Raviart--Thomas space as
$$
\begin{aligned}
\mathcal{RT}_{\hspace{-1pt}k}(\mathcal{T}_{H}) :=\left\{\mathbf{v} \in L^2(\Omega,\mathbb{R}^d): \left.\mathbf{v}\right|_T \in \mathcal{RT}_{\hspace{-1pt}k}(T)\text{ for all } T\in \mathcal{T}_{H}\right\}
\end{aligned}.
$$
Since functions in $\mathcal{RT}_{\hspace{-1pt}k}(\mathcal{T}_{H})$ do not necessarily have a weak divergence, we have to consider the intersection $\mathcal{RT}_{\hspace{-1pt}k}(\mathcal{T}_{H}) \cap \Hdiv$ to obtain an $\Hdiv$-conforming subspace. In fact, this space can be equivalently characterized as the set of functions in $\mathcal{RT}_{\hspace{-1pt}k}(\mathcal{T}_{H})$ that have a continuous normal trace across the interior edges ($d=2$) or faces ($d=3$) of the mesh. Accordingly, we define the Raviart--Thomas finite element space of order $k\in\mathbb{N}_0$ on $\mathcal{T}_H$ (and with a vanishing normal trace on $\partial \Omega$) as
$$
\begin{aligned}
V_H^k :=\mathcal{RT}_{\hspace{-1pt}k}(\mathcal{T}_{H}) \cap \,\Hzerodiv.
\end{aligned}
$$
For the construction of a projection operator $\Pi_H : \Hzerodiv \rightarrow V_H^k$, we will also require the restriction of $V_H^k$ to vertex patches. For this, let $\mathcal{N}_H$ denote the set of vertices of $\mathcal{T}_H$ (that is, the set of all corners of mesh elements). For each vertex $z \in \mathcal{N}_H$, we define 
\begin{align*}
\TauHz :=\{T \in \mathcal{T}_H: \, z \text{ is a vertex of } T\},\qquad \omega_{z}:=\underset{{T \in \TauHz }}{\bigcup} T
\end{align*}
to denote the restriction of $\mathcal{T}_H$ to the element neighborhood of $z$, and the corresponding vertex patch, respectively. We can now define the local mixed finite element space $V_H^k\left(\omega_{z}\right)$ by 
\begin{equation}
\label{def-VHk-loc}
V_H^k(\omega_{z})\,\,:=\,\, \mathcal{RT}_{\hspace{-1pt}k}\left(\TauHz\right) \,\cap\, \Hzerodivomega{\omega_z}.
\end{equation}
The above definition directly implies that any function $\mathbf{v}_{z} \in V_H^k\left(\omega_{z}\right)$ can be canonically extended by zero on $\Omega \setminus \omega_{z}$ such that the extension is an element of $V_H^k = \mathcal{RT}_{\hspace{-1pt}k}(\mathcal{T}_H) \cap \Hzerodiv$. Whenever evaluating a function $\mathbf{v}_{z} \in V_H^k\left(\omega_{z}\right)$ outside of $\omega_z$, we will silently assume that we consider its extension by zero.

In addition to the approximation space for the velocity $\mathbf{u}$, we also require an approximation space for the pressure $p$. For that, we define the space of $\mathcal{T}_{H}$-piecewise polynomials of degree at most $k$ by
$$Q_H^k :=\left\{q \in L^2(\Omega) : \left.q\right|_T\in \mathbb{P}_k(T)\text{ for all } T\in \mathcal{T}_{H} \right\}$$
and its localized version to $\omega_{z}$ by
\begin{align}
\label{def-QHk-loc}
Q_H^k(\omega_{z})\,:=\,\
\{ q \in L^2(\omega_z):\, q \vert_{T} \in \mathbb{P}_k(T) \text{ for all } T \in \TauHz \}.
\end{align} 
In the following, we write $\operatorname{P}_H: L^2(\Omega) \rightarrow Q_H^k$ to denote the corresponding $L^2$-orthogonal projection onto the piecewise polynomials of degree at most $k$. For the construction of the multiscale method and for the derivation of error estimates, we also require a suitable (stable) projection from $\Hzerodiv$ into the Raviart--Thomas space $V_H^k \subset \Hzerodiv$. Although the classical nodal Raviart--Thomas interpolant is not sufficient for our purposes as it lacks $\Hdiv$-stability, it will appear as an ingredient in the Ern--Gudi--Smears--Vohral\'ik projection operator \cite{ern2022equivalence} which has the desired feature. For that reason, we briefly recall the canonical Raviart--Thomas interpolant
$$
\operatorname{I}_H\,:\, \Hdiv \cap L^s(\Omega) \, \rightarrow \,
\mathcal{RT}_{\hspace{-1pt}k}\left(\mathcal{T}_H\right) \cap \Hdiv,
$$
for some fixed (but arbitrary) $s>2$ which is crucial for well-posedness of the interpolant, cf. \cite[Section 2.5]{boffi2013mixed}. Note that $\Hdiv \cap L^s(\Omega) \subsetneq \Hdiv$. For any $\mathbf{v} \in \Hdiv \cap L^s(\Omega)$, the interpolant $\operatorname{I}_H \mathbf{v}$ is uniquely defined by the following element-wise conditions for each $T \in \mathcal{T}_H$:
\begin{equation}\label{def-RT-inte}
\begin{aligned}
\left(\left.\left(\operatorname{I}_H \mathbf{v} - \mathbf{v}\right)\right|_T \cdot \mathbf{n}_T, q_E \right)_{L^2(E)}  &= 0  \qquad  
\mbox{for all }\;
q_E \in \mathbb{P}_k(E),
\,\,
E \in \mathcal{E}_H, \, E\subset \partial T,\\
\left( \operatorname{I}_H\hspace{-1pt}\mathbf{v} - \mathbf{v} , \mathbf{q}_T \right)_{L^2(T)}  &= 0  \qquad  
\mbox{for all }\;
\mathbf{q}_T \in [\mathbb{P}_{k-1}(T)]^d.
\end{aligned}
\end{equation}
Here, $\left.\mathbf{v}\right|_T \cdot \mathbf{n}_T$ denotes the normal trace of $\mathbf{v}$ on $\partial T$. The second condition in \eqref{def-RT-inte} is only needed for $k\ge1$. The first condition states that the normal trace of $\operatorname{I}_H \mathbf{v}$ on $E$ coincides with the $L^2$-orthogonal projection of $\mathbf{v} \cdot \mathbf{n}$ on $\mathbb{P}_k(E)$. In particular, despite being a local construction, this ensures that $\operatorname{I}_H \mathbf{v}$ has a continuous normal trace across element interfaces. Together with the trivial observation that $\operatorname{I}_H \mathbf{v}$ has a piecewise divergence, we conclude that the image of $\operatorname{I}_H$ is in fact a subset of $
\Hdiv$. Also note that if $\mathbf{v}$ has a vanishing trace on $\partial \Omega$, so does $\operatorname{I}_H \mathbf{v}$.
 
One of the most important properties of interpolation operators defined on the space $\Hdiv$ (or $\Hcurl$) is the so-called \quotes{commuting diagram property} that ensures compatibility between differential operators and interpolation operators; cf. \cite{arnold2006finite, arnold2010finite}. This property is typically crucial for the convergence analysis of mixed finite element methods. In our setting, the relevant commuting identity is
$$
\div\left(\operatorname{I}_H \mathbf{v}\right)=\operatorname{P}_{\hspace{-1pt}H}\left(\div \mathbf{v}\right) \qquad \mbox{for all }\; \mathbf{v} \in \Hdiv \cap L^s(\Omega),
$$
i.e., we have an explicit relation between the divergence of the interpolation and the $L^2$-projection of the divergence.

The pair of spaces $(V_H^k,Q_H^k)$ for velocity and pressure is known to yield inf-sup-stability for $b(\cdot,\cdot)$, i.e., there is a constant $\rho>0$ that does not depend on the mesh size $H$, such that 
\begin{align}
\label{inf-sup-classical-spaces}
\inf_{q_H \in Q_H^k \cap L^2_0(\Omega)\backslash\{0\}} \sup _{\mathbf{v}_H \in V_H^k\backslash\{\mathbf{0}\} } \frac{b(\mathbf{v}_H, q_H)}{\|q_H\|_{\Ltwo}\|\mathbf{v}_H\|_{\Hdiv}} \geq \rho.
\end{align}
The result is classical (cf. \cite[Section 7.1.2]{boffi2013mixed} for the case of a boundary piece with Dirichlet values), but we could not find a good reference for the pure Neumann case, which is why we added a short proof for completeness in the appendix.

\subsection{Stable quasi-interpolation}\label{Sec: interp}

The construction of the multiscale method as well as its error analysis require a computable, local, $\Hdiv$-stable projection 
\begin{align*}
    \pi_H: \Hzerodiv \rightarrow V_H^k =\mathcal{RT}_{\hspace{-1pt}k}(\mathcal{T}_{H}) \cap \,\Hzerodiv
\end{align*}
that commutes with the local $L^2$-projection on $Q_H^k$ the same way as the canonical Raviart--Thomas interpolant. A suitable construction is provided in \cite{ern2022equivalence}, which is also the construction that we use later in our numerical experiments and which is summarized in Section \ref{sec-example-stable-int} below. However, since our analysis does not depend on that specific choice but rather on a set of axiomatic properties, it is sufficient to work with an abstract set of assumptions on the projection $\pi_H$. To be precise, we assume that the following properties are satisfied:

\begin{assumption}[Assumptions on the quasi-interpolation operator]\label{assu-stab-inte}\hfill
\begin{enumerate}
\item[(i)] Minimal regularity: The mapping $\pi_H : \Hzerodiv \rightarrow \mathcal{RT}_{\hspace{-1pt}k}(\mathcal{T}_{H}) \cap \,\Hzerodiv$ is well-defined.
\item[(ii)] Stability and locality: There exists a constant $C>0$, depending only on the spatial dimension $d$ and the shape-regularity parameter $\cshape$ of the mesh $\mathcal{T}_{H}$, such that for any $\mathbf{v} \in \Hzerodiv$ it holds
\begin{equation}\label{h-div-inte-stab}
\|\pi_H\mathbf{v}\|_{L^2(T)}^2 + H^2\|\div \pi_H\mathbf{v}\|_{L^2(T)}^2 \leq C \left(\|\mathbf{v}\|_{L^2(N^1(T))}^2 + H^2\|\div \mathbf{v}\|_{L^2(N^1(T))}^2\right)
\end{equation}
for all \( T \in \mathcal{T}_H \), where $N^1(T):= \bigcup \hspace{1pt}\{ K \in \mathcal{T}_H  \, : \, K \cap T \not= \emptyset \}$ denotes the element patch (consisting of $T$ and all its neighbors). In particular, the constant $C$ is independent of the mesh size $H$.
\item[(iii)] Projection property: The mapping
\( \pi_H : \Hzerodiv \rightarrow \mathcal{RT}_{\hspace{-1pt}k}(\mathcal{T}_{H}) \cap \,\Hzerodiv \) is a projection, that is, $\pi_H$ is surjective and fulfills $\pi_H \circ \pi_H = \pi_H$.
\item[(iv)] Commuting property: It holds $$
\div (\pi_H \mathbf{v})=\mathrm{P}_H (\div \mathbf{v}) \qquad \mathrm{for}\; \mathrm{all}\;\; \mathbf{v} \in \Hzerodiv,
$$
where we recall that $\mathrm{P}_H : L^2(\Omega) \rightarrow Q_H^k$ denotes the $L^2$-orthogonal projection onto $Q_H^k$.
\end{enumerate}
\end{assumption}

\subsubsection{Construction of a stable quasi-interpolation operator}\label{sec-example-stable-int}

In this subsection, we briefly describe the construction of the stable locally-defined quasi-interpolation operator $\Pi_H$ from $\Hzerodiv$ to $V_H^k= \mathcal{RT}_{\hspace{-1pt}k}(\mathcal{T}_H) \cap \Hzerodiv$ from \cite{ern2022equivalence}. Before we start, note that the element-wise Raviart--Thomas interpolant given by \eqref{def-RT-inte} is still well-defined for piecewise smooth functions $\mathbf{v} \in C^1(\mathcal{T}_H) := \{ \mathbf{v} \in L^2(\Omega)\, : \, \mathbf{v}\vert_{T} \in C^1(T,\mathbb{R}^d) \mbox{ for all } T \in \mathcal{T}_H\}$. However, in this case, the interpolation is not necessarily $\Hdiv$-conforming. To distinguish it from $\operatorname{I}_H$, we shall denote it by 
$$\operatorname{I}_H^{\operatorname{nc}} : C^1(\mathcal{T}_H) \rightarrow \mathcal{RT}_{\hspace{-1pt}k}(\mathcal{T}_{H}).$$
The interpolation $\operatorname{I}_H^{\operatorname{nc}} \mathbf{v}$ still admits a $\mathcal{T}_H$-piecewise divergence.

\begin{definition}\label{def-quas-inte}
Let $\mathbf{v} \in \Hzerodiv$ be arbitrary. We start with defining the element-wise $L^2$-best approximations $\boldsymbol{\tau}_H(\mathbf{v}) \in \mathcal{RT}_{\hspace{-1pt}k}(\mathcal{T}_{H})$ under the commuting property constraint by 
\begin{equation}\label{quas-inte-step1}
\left.\boldsymbol{\tau}_H(\mathbf{v})\right|_T\,\,:=\,\,\mathbf{v}_T^*\,\,:=\,\,\underset{\substack{\mathbf{v}_T \in \mathcal{RT}_k(T) \\ \div \mathbf{v}_T=\operatorname{P}_H(\div \mathbf{v})}}{\arg \min }\left\|\mathbf{v}-\mathbf{v}_T\right\|_{L^2(T)}\qquad \mathrm{for}\; \mathrm{all}\;\; T \in \mathcal{T}_H.
\end{equation}
Since $\boldsymbol{\tau}_H(\mathbf{v})$ is still non-conforming, we first localize it to vertex patches $\omega_z$ (for $z \in \mathcal{N}_H$) by forming $\psi_{z}\hspace{1pt} \boldsymbol{\tau}_H(\mathbf{v})$. Here, $\psi_{z}\in  \{ v_H \in C^0(\Omega)  :  \left.v_H\right\rvert_{T} \in \mathbb{P}_1(T) \mbox{ for all } T \in \mathcal{T}_H\}$ with $\mbox{\normalfont supp}\hspace{1pt}\psi_z =\omega_z$ denote the piecewise linear Lagrange nodal basis functions uniquely defined via $\psi_z(z^{\prime})=\delta_{zz^{\prime}}$ for $z,z^{\prime} \in \mathcal{N}_H$. This is an admissible localization since the $\psi_z$ form a partition of unity, i.e., $\sum_{z\in \mathcal{N}_H} \psi_z \equiv 1$.

Afterwards, the localized approximations $\psi_{z}\hspace{1pt} \boldsymbol{\tau}_H(\mathbf{v})$ are smoothed on each patch in the following way. For each $z \in \mathcal{N}_H$, we define the (local) smoothed interpolation $\boldsymbol{\sigma}_{z}(\mathbf{v}) \in V_H^k\left(\omega_{z}\right)$ by
\begin{equation}\label{quas-inte-step2}
\boldsymbol{\sigma}_{z}(\mathbf{v}):=\underset{\substack{\mathbf{v}_{z} \in V_H^k\left(\omega_{z}\right) \\ \div \mathbf{v}_{z}=\operatorname{P}_H\left(\psi_{z} \div \mathbf{v}+\nabla \psi_{z} \cdot \boldsymbol{\tau}_H(\mathbf{v})\right)}}{\arg \min }\left\|\mathbf{v}_{z}-\operatorname{I}_H^{\operatorname{nc}}\hspace{-1pt}\left(\, \psi_{z}\hspace{1pt} \boldsymbol{\tau}_H(\mathbf{v})\,\right)\right\|_{L^2(\omega_{z})}.
\end{equation}
Recall here the local $\Hdiv$-conforming Raviart--Thomas space $V_H^k\left(\omega_{z}\right)$ from \eqref{def-VHk-loc}. Since $\boldsymbol{\sigma}_{z}(\mathbf{v})$ admits an $\Hdiv$-conforming extension by zero to $\Omega$, we can define $\operatorname{\Pi}_H(\mathbf{v}) \in V_H^k$ by
\begin{equation*}
\operatorname{\Pi}_H(\mathbf{v}):=\sum_{z \in \mathcal{N}_H} \boldsymbol{\sigma}_{z}(\mathbf{v}).
\end{equation*}
Note that the vanishing normal trace of $\operatorname{\Pi}_H(\mathbf{v})$ on $\partial \Omega$ is already incorporated in $\boldsymbol{\sigma}_{z}(\mathbf{v})$ through the definition of $V_H^k\left(\omega_{z}\right)$.
\end{definition}
The justification that the construction of $\operatorname{\Pi}_H(\mathbf{v})$ is well-defined is provided in \cite[Section 4.1]{ern2022equivalence}. Furthermore, the quasi-interpolation operator satisfies the following stability properties: 
\begin{lemma} 
The interpolation operator $\operatorname{\Pi}_H : \Hzerodiv \rightarrow V_H^k$ from Definition \ref{def-quas-inte} is well-defined, it is a projection (i.e., $\Pi_H = \Pi_H \circ \Pi_H$ and $\mbox{\normalfont im}\hspace{1pt}(\Pi_H)=V_H^k$), and it has the commuting property 
$$
\begin{aligned}
\div \operatorname{\Pi}_H(\mathbf{v}) & =\operatorname{P}_H(\div \mathbf{v}) & & \quad\mathrm{for}\; \mathrm{all}\;\; \mathbf{v} \in \Hzerodiv
\end{aligned}
$$
with the $L^2$-projection $\operatorname{P}_H : L^2(\Omega) \rightarrow Q_H^k$. 
Furthermore, for any $\mathbf{v} \in \Hzerodiv$ and any $T \in \mathcal{T}_H$, we have the stability bound
\begin{equation*}
\left\|\operatorname{\Pi}_H(\mathbf{v})\right\|_{L^2(T)}^2+H^2\left\|\div \operatorname{\Pi}_H(\mathbf{v})\right\|_{L^2(T)}^2 \leq C \left(\|\mathbf{v}\|_{L^2(N^1(T))}^2+H^2\|\div \mathbf{v}\|_{L^2(N^1(T))}^2\right),
\end{equation*}
and the error estimate
\begin{eqnarray}
\label{loc-error-estimate-Ern-interpol}
\lefteqn{ \left\| \mathbf{v} - \operatorname{\Pi}_H(\mathbf{v}) \right\|_{L^2(T)}^2+H^2\left\| \div \mathbf{v} - \div \operatorname{\Pi}_H(\mathbf{v}) \right\|_{L^2(T)}^2 }\\
\nonumber&\leq& C  \underset{K\subset N^1(T)}{\sum_{K \in \mathcal{T}_H}}
\left(\min_{\mathbf{v}_K \in \mathcal{RT}_{\hspace{-1pt}k}(K)} \|\mathbf{v} - \mathbf{v}_K \|_{L^2(K)}^2+H^2\|\div \mathbf{v} - \operatorname{P}_H(\div \mathbf{v})\|_{L^2(K)}^2 \right).
\end{eqnarray}
In both bounds, $C>0$ is a generic constant depending only on the spatial dimension $d$, the shape-regularity parameter $\cshape$ of the mesh $\mathcal{T}_H$, and the polynomial degree $k$.
\end{lemma}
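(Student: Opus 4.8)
The plan is to treat the four assertions separately, in the order \emph{well-definedness} $\to$ \emph{commuting property} $\to$ \emph{projection property} $\to$ \emph{quantitative bounds}, following the construction's source \cite{ern2022equivalence} but isolating the short structural arguments from the technical ones. For \textbf{well-definedness}, I would note that both \eqref{quas-inte-step1} and \eqref{quas-inte-step2} minimize the distance to a fixed vector over an affine subspace of a finite-dimensional inner-product space, so it is enough to show that these affine subspaces are nonempty; the minimizer is then automatically unique. For \eqref{quas-inte-step1} this reduces to the elementary surjectivity of $\div:\mathcal{RT}_{\hspace{-1pt}k}(T)\to\mathbb{P}_k(T)$. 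For \eqref{quas-inte-step2}, setting $g_z:=\operatorname{P}_H(\psi_z\div\mathbf{v}+\nabla\psi_z\cdot\boldsymbol{\tau}_H(\mathbf{v}))$ and using that $\div$ maps $V_H^k(\omega_z)$ into $Q_H^k(\omega_z)\cap L^2_0(\omega_z)$ (divergence theorem plus the vanishing normal trace on $\partial\omega_z$), nonemptiness follows from the inf-sup stability of $b(\cdot,\cdot)$ on the patch (i.e.\ \eqref{inf-sup-classical-spaces} with $\Omega$ replaced by $\omega_z$) together with the compatibility $\int_{\omega_z}g_z=0$. For the latter I would use that $\operatorname{P}_H$ preserves elementwise integrals, integrate by parts (the boundary term vanishes because $\psi_z$ vanishes on $\partial\omega_z\setminus\partial\Omega$ and $\mathbf{v}\cdot\mathbf{n}=0$ on $\partial\Omega$), reaching $\int_{\omega_z}g_z=\int_{\omega_z}\nabla\psi_z\cdot(\boldsymbol{\tau}_H(\mathbf{v})-\mathbf{v})$, and then observe that $\nabla\psi_z|_T$ is an elementwise constant, hence a divergence-free element of $\mathcal{RT}_{\hspace{-1pt}k}(T)$, so the optimality condition defining $\boldsymbol{\tau}_H(\mathbf{v})|_T$ makes $\boldsymbol{\tau}_H(\mathbf{v})-\mathbf{v}$ locally $L^2$-orthogonal to $\nabla\psi_z$; this is essentially the argument of \cite[Section 4.1]{ern2022equivalence}.

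For the \textbf{commuting property}, by construction and linearity of $\operatorname{P}_H$ one has $\div\operatorname{\Pi}_H(\mathbf{v})=\sum_{z\in\mathcal{N}_H}\operatorname{P}_H(\psi_z\div\mathbf{v}+\nabla\psi_z\cdot\boldsymbol{\tau}_H(\mathbf{v}))=\operatorname{P}_H((\sum_z\psi_z)\div\mathbf{v}+(\sum_z\nabla\psi_z)\cdot\boldsymbol{\tau}_H(\mathbf{v}))$, and the partition-of-unity identities $\sum_z\psi_z\equiv1$, $\sum_z\nabla\psi_z\equiv0$ collapse this to $\operatorname{P}_H(\div\mathbf{v})$. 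For the \textbf{projection property}, since $\operatorname{im}(\operatorname{\Pi}_H)\subseteq V_H^k$ by construction it suffices to prove $\operatorname{\Pi}_H\mathbf{v}_H=\mathbf{v}_H$ for $\mathbf{v}_H\in V_H^k$ (idempotence and $\operatorname{im}(\operatorname{\Pi}_H)=V_H^k$ then follow). Here $\div\mathbf{v}_H\in Q_H^k$, so $\operatorname{P}_H(\div\mathbf{v}_H)=\div\mathbf{v}_H$ and $\mathbf{v}_H|_T$ is admissible in \eqref{quas-inte-step1}, giving $\boldsymbol{\tau}_H(\mathbf{v}_H)=\mathbf{v}_H$. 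Next, $\psi_z\mathbf{v}_H\in\Hdivomega{\omega_z}$ with vanishing normal trace on $\partial\omega_z$ (the product of a continuous piecewise-linear scalar and an $\Hdivomega{\omega_z}$-field has continuous normal trace across interior faces), so $\operatorname{I}_H^{\operatorname{nc}}(\psi_z\mathbf{v}_H)=\operatorname{I}_H(\psi_z\mathbf{v}_H)\in V_H^k(\omega_z)$, and by the elementwise commuting property of the canonical interpolant its divergence equals $\operatorname{P}_H(\psi_z\div\mathbf{v}_H+\nabla\psi_z\cdot\mathbf{v}_H)$, which is exactly the divergence constraint in \eqref{quas-inte-step2} after inserting $\boldsymbol{\tau}_H(\mathbf{v}_H)=\mathbf{v}_H$; hence $\operatorname{I}_H^{\operatorname{nc}}(\psi_z\mathbf{v}_H)$ is admissible and realizes zero distance, so $\boldsymbol{\sigma}_z(\mathbf{v}_H)=\operatorname{I}_H^{\operatorname{nc}}(\psi_z\mathbf{v}_H)$. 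Summing over $z$ and using elementwise linearity of $\operatorname{I}_H^{\operatorname{nc}}$, $\sum_z\psi_z\equiv1$, and reproduction of $\mathcal{RT}_{\hspace{-1pt}k}(T)$ on each $T$ yields $\operatorname{\Pi}_H(\mathbf{v}_H)=\operatorname{I}_H^{\operatorname{nc}}(\mathbf{v}_H)=\mathbf{v}_H$.

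For the \textbf{stability bound} \eqref{h-div-inte-stab} and the \textbf{error estimate} \eqref{loc-error-estimate-Ern-interpol}, I would follow the local analysis of \cite[Section 4]{ern2022equivalence}: transform each vertex patch $\omega_z$ to a reference configuration (admissible by shape-regularity, Assumption \ref{assu-mesh}), establish the $H$-weighted stability of each building block --- (a) $\|\boldsymbol{\tau}_H(\mathbf{v})\|_{L^2(T)}+H_T\|\div\boldsymbol{\tau}_H(\mathbf{v})\|_{L^2(T)}\lesssim\|\mathbf{v}\|_{L^2(T)}+H_T\|\div\mathbf{v}\|_{L^2(T)}$; (b) $L^2$-stability of $\operatorname{I}_H^{\operatorname{nc}}$ on $\mathcal{T}_H$-piecewise polynomials plus an inverse estimate for the divergence term; (c) the analogous stability of the constrained least-squares problem \eqref{quas-inte-step2} --- and then sum over the finitely overlapping patches. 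The error estimate \eqref{loc-error-estimate-Ern-interpol} then follows by a Bramble--Hilbert/best-approximation argument: subtract an elementwise Raviart--Thomas best approximation on each $K\subset N^1(T)$, use that $\boldsymbol{\tau}_H$, $\operatorname{I}_H^{\operatorname{nc}}$, and $\boldsymbol{\sigma}_z$ all reproduce local Raviart--Thomas fields on their respective domains, apply the stability bound to the remainder, and control the divergence mismatch by $\|\div\mathbf{v}-\operatorname{P}_H(\div\mathbf{v})\|_{L^2(K)}$. I expect the main obstacle to be steps (a) and (c): turning the \emph{affine} divergence constraints into stability estimates with constants independent of $H$ requires an $H$-uniform stable right inverse of the divergence on $\mathcal{RT}_{\hspace{-1pt}k}(T)$ and on $V_H^k(\omega_z)$, and one must carefully track the $H$-weights when scaling back from the reference patch; the remaining pieces are routine.
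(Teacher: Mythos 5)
The paper does not reproduce a proof of this lemma; it only cites \cite[Theorem 3.2]{ern2022equivalence}, which is exactly the source you reconstruct. Your outline is correct and faithful to that reference: the structural parts (well-definedness via the patchwise compatibility $\int_{\omega_z}g_z=0$ combined with inf-sup surjectivity of $\div$ on $V_H^k(\omega_z)$, the partition-of-unity collapse $\sum_z\psi_z\equiv 1$, $\sum_z\nabla\psi_z\equiv 0$ for the commuting property, and the $\mathcal{RT}_{\hspace{-1pt}k}$-reproduction argument giving $\boldsymbol{\tau}_H(\mathbf{v}_H)=\mathbf{v}_H$, $\boldsymbol{\sigma}_z(\mathbf{v}_H)=\operatorname{I}_H^{\operatorname{nc}}(\psi_z\mathbf{v}_H)$, and hence idempotence) are handled correctly, and your identification of the $H$-uniform stable right inverse of the divergence, together with scaling to a reference patch and a Bramble--Hilbert argument, as the technical core of the stability and error bounds matches the analysis in \cite[Section~4]{ern2022equivalence}.
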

For a proof, we refer to \cite[Theorem 3.2]{ern2022equivalence}.

\begin{remark}[Local interpolation error bound for $\pi_H$]
\label{remark-error-estimates-piH}
Note that the error estimate \eqref{loc-error-estimate-Ern-interpol} for $\Pi_H$ implies that the same estimate must hold (up to slightly increased locality) for any $\pi_H$ satisfying Assumption \ref{assu-stab-inte}. Indeed, for any $\mathbf{v}\in \Hzerodiv$, we have by the projection properties that
\begin{eqnarray*}
\lefteqn{ \left\| \mathbf{v} - \operatorname{\pi}_H(\mathbf{v}) \right\|_{L^2(T)}^2 
+H^2\left\| \div \mathbf{v} - \div \operatorname{\pi}_H(\mathbf{v}) \right\|_{L^2(T)}^2 } \\
&=&
\left\| (\id - \operatorname{\pi}_H) (\mathbf{v} - \operatorname{\Pi}_H (\mathbf{v})) \right\|_{L^2(T)}^2 
+H^2\left\| \div\; (\id - \operatorname{\pi}_H) (\mathbf{v} - \operatorname{\Pi}_H (\mathbf{v})) \right\|_{L^2(T)}^2
\\
&\overset{\eqref{h-div-inte-stab}}{\lesssim}& \left\| \mathbf{v} - \operatorname{\Pi}_H (\mathbf{v}) \right\|_{L^2(N^1(T))}^2 + H^2 \| \div (\mathbf{v} - \operatorname{\Pi}_H (\mathbf{v})) \|_{L^2(N^1(T))}^2\\
&\overset{\eqref{loc-error-estimate-Ern-interpol}}\lesssim& 
\underset{K\subset N^2(T)}{\sum_{K \in \mathcal{T}_H}} \left(
\min_{\mathbf{v}_K \in \mathcal{RT}_{\hspace{-1pt}k}(K)} \|\mathbf{v} - \mathbf{v}_K \|_{L^2(K)}^2+H^2\|\div \mathbf{v} - \operatorname{P}_H(\div \mathbf{v})\|_{L^2(K)}^2\right),
\end{eqnarray*}
where $N^2(T):=\bigcup_{K \subset N^1(T)} N^1(K)$.
\end{remark}

\subsubsection{Implementation of a stable quasi-interpolation operator}\label{sec-example-int-impl}

Since the stable quasi-interpolation operator is not only required as an analytical tool, but also as an explicit component of the numerical method, we briefly want to discuss the practical implementation of the operator $\Pi_H$ from Definition \ref{def-quas-inte}.

To this end, we observe that the minimization problems \eqref{quas-inte-step1} and \eqref{quas-inte-step2} are equivalent to (low-dimensional) mixed finite element problems on the element $T$ and the vertex patch  $\omega_{z}$, respectively. Specifically, by introducing a Lagrange multiplier $p_T \in \mathbb{P}_k(T)$, problem \eqref{quas-inte-step1} is reformulated as finding the unique saddle point $\left(\mathbf{v}_T^*, p_T^*\right)\in V_H^k(T) \times Q_H^k(T) := \mathcal{RT}_{\hspace{-1pt}k}(T) \times \mathbb{P}_k(T)$ of the Lagrangian
\begin{equation*}
\mathcal{L}\left(\mathbf{v}_T, p_T\right):=\frac{1}{2}\left(\mathbf{v}-\mathbf{v}_T, \mathbf{v}-\mathbf{v}_T\right)_{L^2(T)}+\left(p_T, \div \mathbf{v}_T-\operatorname{P}_H(\div \mathbf{v})\right)_{L^2(T)},
\end{equation*}
which is equivalent to solving the following problem: Find $\mathbf{v}_T^* \in V_H^k(T)$ and $p_T^* \in Q_H^k(T)$ such that
\begin{align}\label{quas-inte-step1-reformulation}
\begin{split}
\left(\mathbf{v}_T^*, \boldsymbol{\varsigma}_H\right)_{L^2(T)} + \left(\div \boldsymbol{\varsigma}_H, p_T^*\right)_{L^2(T)}&=(\mathbf{v}, \boldsymbol{\varsigma}_H)_{L^2(T)} \qquad\qquad\;\;\;\mbox{for all }\; \boldsymbol{\varsigma}_H \in V_H^k(T), \\ 
\left(\div \mathbf{v}_T^*, q_H\right)_{L^2(T)} &= \left(\operatorname{P}_H(\div \mathbf{v}), q_H\right)_{L^2(T)}  \quad\mbox{for all }\; q_H \in Q_H^k(T).
\end{split}
\end{align}
We set $\left.\boldsymbol{\tau}_H(\mathbf{v})\right|_T\,:=\,\mathbf{v}_T^*$ for $T\in \mathcal{T}_H$.

For the second problem \eqref{quas-inte-step2}, recall the definitions of $V_H^k(\omega_{z})$ and $Q_H^k(\omega_{z})$ from \eqref{def-VHk-loc} and \eqref{def-QHk-loc} respectively. 
Similar as in the derivation of \eqref{quas-inte-step1-reformulation}, problem \eqref{quas-inte-step2} is now equivalent to solving the following problem: Find  $\mathbf{v}_{z}^* \in V_H^k(\omega_{z})$ and a Lagrange multiplier $p_z^* \in$ $Q_H^k(\omega_{z})$ such that
\begin{align}\label{quas-inte-step2-reformulation}
\begin{split}
\left(\mathbf{v}_{z}^*, \boldsymbol{\varsigma}_H\right)_{L^2(\omega_{z})} + \left(\div \boldsymbol{\varsigma}_H, p_z^*\right)_{L^2(\omega_{z})}=(\operatorname{I}_H^{\operatorname{nc}}\left(\psi_{z} \boldsymbol{\tau}_H(\mathbf{v}) \right), \boldsymbol{\varsigma}_H)_{L^2(\omega_{z})} & \quad\mbox{for all }\; \boldsymbol{\varsigma}_H \in V_H^k(\omega_{z}), \\ 
\left(\div \mathbf{v}_{z}^*, q_H\right)_{L^2(\omega_{z})} = \left(\operatorname{P}_H\left(\psi_z \div \mathbf{v}+\nabla \psi_z \cdot \boldsymbol{\tau}_H(\mathbf{v}) \right), q_H\right)_{L^2(\omega_{z})} & \quad\mbox{for all }\; q_H \in Q_H^k(\omega_{z}).
\end{split}
\end{align}
With $\boldsymbol{\sigma}_{z}(\mathbf{v})=\mathbf{v}_{z}^*$ and its extension by zero to $\Omega$, the quasi-interpolation $\operatorname{\Pi}_H(\mathbf{v})$ is obtained by $\operatorname{\Pi}_H(\mathbf{v})=\sum\limits_{z \in \mathcal{N}_H} \boldsymbol{\sigma}_{z}(\mathbf{v})$. Note that \eqref{quas-inte-step1-reformulation} and \eqref{quas-inte-step2-reformulation} are very small local problems that are inexpensive to solve.

\section{Ideal multiscale method}\label{Sec: Ideal}

We are now prepared to introduce the ideal multiscale method, i.e., we present the construction of a Localized Orthogonal Decomposition (LOD) space which is obtained by enriching the coarse Raviart--Thomas approximation space $V_H^k$ by fine-scale details from the kernel of the quasi-interpolation operator. The resulting low-dimensional LOD space is designed so that it captures the multiscale features of the true multiscale solution, ultimately serving as a substitute for the classical approximation space $V_H^k$.

Let $\pi_H$ be a stable interpolation operator satisfying Assumption \ref{assu-stab-inte}, e.g., the particular construction from Definition \ref{def-quas-inte}. Using $\pi_H$, we can decompose the space $\Hzerodiv$ into the finite-, low-dimensional coarse space $V_H^k=\operatorname{im}\left(\pi_H\right)$ and the detail space defined as the kernel of $\pi_H$, that is, 
\begin{align*}
    W:=\operatorname{ker}\left(\pi_H\right) \subset \Hzerodiv.
\end{align*}
This results in the direct sum decomposition 
\begin{align}
\label{initial-decomposition}
\Hzerodiv=V_H^k \oplus W.
\end{align}
Note that $W$ is closed, as it is the kernel of a continuous operator. The continuity of $\pi_H$ is guaranteed by the second property in Assumption \ref{assu-stab-inte}. 

Starting from the decomposition $\Hzerodiv=V_H^k \oplus W$, we construct a modified splitting, where $V_H^k$ is replaced by an ideal multiscale space that incorporates fine-scale features. The ideal multiscale space will be defined as $\left(\id -\operatorname{\mathcal{C}}\right)V_H^k$, where $\id$  denotes the identity operator and $\operatorname{\mathcal{C}}$ is a linear correction operator. The correction operator is constructed using information from the coefficient $\mathbf{A}$ and has a divergence-free range to ensure the resulting method is still mass-conserving.

\subsection{Correction operator and ideal multiscale space}\label{Sec: Corr op and ideal space}

Recalling $\Hzerodivzero = \Hdivzero \cap \Hzerodiv$ as the subspace of $\Hdiv$ consisting of divergence-free functions with a vanishing normal trace on $\partial \Omega$, we start by restricting the decomposition \eqref{initial-decomposition} to $\Hzerodivzero$. To this end, we define the spaces
\begin{align}
\label{def-Wdivzero}
\VHdivzero :=\operatorname{im}\left(\left.\operatorname{\pi}_H\right|_{\Hzerodivzero}\right), \qquad  \Wdivzero:=\operatorname{ker}\left(\left.\operatorname{\pi}_H\right|_{\Hzerodivzero}\right).
\end{align}
Note here that $\VHdivzero\subset \Hzerodivzero$ because of the commuting property (Assumption \ref{assu-stab-inte}) which implies for any $\mathbf{v} \in \Hzerodivzero$ that
$$
\div (\pi_H \mathbf{v})=\mathrm{P}_H (\div \mathbf{v}) = \mathrm{P}_H (0) = 0.
$$
Hence, $\pi_H \mathbf{v}$ is divergence-free if $\mathbf{v}$ is divergence-free. Consequently, since $\pi_H$ is a projection, we obtain that
\begin{align*}
\Hzerodivzero=\VHdivzero \oplus \Wdivzero.
\end{align*}
With the above decomposition, we can now introduce the ideal correction operator. First, we define local (element-wise) correctors, followed by the construction of a global corrector based on these local correctors.

\begin{definition}[Ideal correction operators] 
\label{def:ideal-corrector}
Let $\mathbf{v} \in \Hzerodiv$ be given. For each $T \in \mathcal{T}_H$, we define the restriction of $a(\cdot,\cdot)$ to $T$ by
$$
a_T(\mathbf{v}, \mathbf{w}):=(\mathbf{A}^{-1} \mathbf{v}, \mathbf{w})_{L^2(T)}
\qquad \mathrm{for}\;\; \mathbf{w} \in \Wdivzero.
$$
With this, the ideal element correction operator $\operatorname{\mathcal{C}}_{T}: \Hzerodiv \rightarrow \Wdivzero$ is defined by
\begin{equation}\label{ideal-correction-operator}
a\left(\operatorname{\mathcal{C}}_{T} \hspace{-1pt}\mathbf{v}, \mathbf{w}\right)=a_T\left(\mathbf{v}, \mathbf{w}\right)\qquad \mathrm{for}\; \mathrm{all}\;\; \mathbf{w} \in \Wdivzero. 
\end{equation}
The ideal global correction operator $\operatorname{\mathcal{C}}: \Hzerodiv \rightarrow \Wdivzero$ is then defined as the sum of the local contributions, i.e., $$\operatorname{\mathcal{C}}:=\underset{T \in \mathcal{T}_H}{\sum} \operatorname{\mathcal{C}}_{T}.$$ Note that the global corrector satisfies
\begin{equation}\label{ideal-global-correction-operator}
a\left(\operatorname{\mathcal{C}}\hspace{-1pt}\mathbf{v}, \mathbf{w}\right)=a\left(\mathbf{v}, \mathbf{w}\right)\qquad \mathrm{for}\; \mathrm{all}\;\; \mathbf{w} \in \Wdivzero. 
\end{equation}
\end{definition}
By the Lax--Milgram theorem and the coercivity and boundedness of $a(\cdot, \cdot)$ on the (closed) space $\Wdivzero$, the ideal correction operators are well-defined. 

Using the corrector $\operatorname{\mathcal{C}}$, we construct the LOD multiscale space $\VHkms$ by enriching the coarse space $V_H^k$ with \quotes{details} from $\Wdivzero$. Observe that functions $\mathbf{w}\in\Wdivzero$ cannot change the coarse scale behaviour due to $\pi_H\mathbf{w}=\mathbf{0}$ and they cannot change the divergence due to $\div \mathbf{w} =0$. Having both properties simultaneously is key to a reasonable multiscale space with suitable approximation properties. In fact, as we will see later, the property $\div \mathbf{w}=0$ is crucial to guarantee that the resulting multiscale approximation $\mathbf{u}_H^{\mathrm{ms}}$ preserves local mass conservation on the coarse mesh, i.e., the property $\mathrm{div}\,\mathbf{u}_H^{\mathrm{ms}} = -P_H(f)$. In other words, this condition ensures that the divergence of the numerical flux remains consistent with the given data, which is an essential qualitative feature of the exact solution.

With this, we define the LOD space by
\begin{equation}\label{VHkms defn} \VHkms := \left(\id - \operatorname{\mathcal{C}}\right)V_H^k, \end{equation}
where we recall $\id$ as the identity operator. Since it holds
\begin{equation*} 
\operatorname{\pi}_H\hspace{-2pt}\left((\id - \operatorname{\mathcal{C}})\mathbf{v}_H\right) = \mathbf{v}_H \qquad \mbox{for all } \;\mathbf{v}_H \in V_H^k,
\end{equation*}
we conclude that $(\id-\mathcal{C}): V_H^k \rightarrow \VHkms$ is bijective with inverse $\pi_H: \VHkms \rightarrow V_H^k$. Consequently, the ideal multiscale space $\VHkms$ has the same dimension as $V_H^k$. We summarize some important properties of $\VHkms$ in the following corollary. 

\begin{corollary}\label{corollary-decompositions}
Let $\operatorname{\mathcal{C}}: \Hzerodiv \rightarrow \Wdivzero$ denote the correction operator defined in Definition \ref{def:ideal-corrector}, and let $\VHkms$ be the LOD space defined in \eqref{VHkms defn}. It holds $\dim\VHkms=\dim V_H^k$ and we have the direct decomposition 
\begin{align}
\label{multiscale-decomposition-1}
\VHkms \oplus W \,\, =\,\, \Hzerodiv.
\end{align}
Furthermore, it holds
\begin{align*}
\label{multiscale-decomposition-2}
\VHkms \oplus \Wdivzero \,\,\subsetneq\,\, \Hzerodiv,
\end{align*}
where $\VHkms$ and $\Wdivzero$ are $a$-orthogonal in the sense that
\begin{equation}\label{a-orthogonality}
a\left(\mathbf{v}_H^{\mathrm{ms}}, \mathbf{w}\right)=0 \qquad \mathrm{for}\; \mathrm{all}\;\; \mathbf{v}_H^{\mathrm{ms}} \in \VHkms, \; \mathbf{w} \in \Wdivzero.
\end{equation}
\end{corollary}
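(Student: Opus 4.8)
The statement bundles three facts, and I would treat them in order: the dimension count together with the splitting $\VHkms \oplus W = \Hzerodiv$ in \eqref{multiscale-decomposition-1}, then the $a$-orthogonality \eqref{a-orthogonality}, and finally the directness and strictness of $\VHkms \oplus \Wdivzero \subsetneq \Hzerodiv$. Everything hinges on one elementary observation, already recorded before the corollary: since $\mathcal{C}\mathbf{v} = \sum_{T\in\mathcal{T}_H}\mathcal{C}_T\mathbf{v}$ lies in $\Wdivzero\subset W = \ker\pi_H$ and $\pi_H$ fixes its image $V_H^k$ pointwise (being a projection onto $V_H^k$), we have $\pi_H\big((\id-\mathcal{C})\mathbf{v}_H\big) = \mathbf{v}_H$ for all $\mathbf{v}_H\in V_H^k$. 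Hence $\id-\mathcal{C}\colon V_H^k\to\VHkms$ is injective with left inverse $\pi_H|_{\VHkms}$ and, being surjective by definition of $\VHkms$, is a bijection; in particular $\dim\VHkms = \dim V_H^k$.

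For the decomposition $\VHkms\oplus W = \Hzerodiv$ I would argue purely at the level of the direct sum \eqref{initial-decomposition}: for any linear map $\mathcal{C}$ with range inside $W$, the subspace $(\id-\mathcal{C})V_H^k$ is again complementary to $W$. For triviality of the intersection: if $(\id-\mathcal{C})\mathbf{v}_H\in W$ then applying $\pi_H$ forces $\mathbf{v}_H=\mathbf{0}$. For spanning: an arbitrary $\mathbf{v}\in\Hzerodiv$ decomposes as $\mathbf{v} = (\id-\mathcal{C})\pi_H\mathbf{v} + \big(\mathbf{v}-(\id-\mathcal{C})\pi_H\mathbf{v}\big)$, whose first term is in $\VHkms$ and whose remainder lies in $W$ because applying $\pi_H$ to it gives $\pi_H\mathbf{v}-\pi_H\mathbf{v}=\mathbf{0}$ (using $\pi_H\circ\pi_H=\pi_H$ and $\mathcal{C}\pi_H\mathbf{v}\in\ker\pi_H$). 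This is \eqref{multiscale-decomposition-1}.

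The $a$-orthogonality \eqref{a-orthogonality} is then immediate: for $\mathbf{v}_H^{\mathrm{ms}}=(\id-\mathcal{C})\mathbf{v}_H\in\VHkms$ and $\mathbf{w}\in\Wdivzero$, the defining relation \eqref{ideal-global-correction-operator} of the global corrector gives $a(\mathbf{v}_H^{\mathrm{ms}},\mathbf{w}) = a(\mathbf{v}_H,\mathbf{w}) - a(\mathcal{C}\mathbf{v}_H,\mathbf{w}) = 0$. Directness of the sum $\VHkms\oplus\Wdivzero$ follows either from $\Wdivzero\subset W$ and the intersection triviality just shown, or, more intrinsically, from coercivity of $a$ on $\Wdivzero$: a common element $\mathbf{v}$ would satisfy $a(\mathbf{v},\mathbf{v})=0$, hence $\mathbf{v}=\mathbf{0}$.

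It remains to show the inclusion is strict, and this is the one point requiring a genuine argument rather than bookkeeping. The idea is that divergences of elements of $\VHkms\oplus\Wdivzero$ are forced into $Q_H^k$: indeed $\div\mathbf{w}=0$ for $\mathbf{w}\in\Wdivzero$, while $\div\big((\id-\mathcal{C})\mathbf{v}_H\big)=\div\mathbf{v}_H\in Q_H^k$ since $\mathcal{C}\mathbf{v}_H\in\Wdivzero$ is divergence-free and the divergence of an $\mathcal{RT}_k(\mathcal{T}_H)$-function is piecewise polynomial of degree at most $k$. On the other hand, $\div\colon\Hzerodiv\to L^2_0(\Omega)$ is surjective (via $\mathbf{v}_q=\nabla\varphi_q$, as recalled in the well-posedness discussion), and $Q_H^k\cap L^2_0(\Omega)$ is a proper, finite-dimensional subspace of the infinite-dimensional $L^2_0(\Omega)$; hence there is $\mathbf{v}\in\Hzerodiv$ with $\div\mathbf{v}\notin Q_H^k$, which therefore cannot lie in $\VHkms\oplus\Wdivzero$. (Equivalently one can exhibit $\mathbf{w}\in W$ with $\div\mathbf{w}\neq 0$, so that $\Wdivzero\subsetneq W$, and then derive a contradiction from $\Hzerodiv=\VHkms\oplus W$.) The only subtlety throughout is making this non-containment concrete; the rest follows directly from the projection and commuting properties of $\pi_H$ and the definition of $\mathcal{C}$.
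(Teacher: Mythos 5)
Your proposal is correct and follows essentially the same route as the paper: the same injectivity argument via $\pi_H\circ(\id-\mathcal{C})=\id$ on $V_H^k$ for the dimension count, the identical splitting of $\mathbf{v}$ into $(\id-\mathcal{C})\pi_H\mathbf{v}$ plus a $W$-remainder, $a$-orthogonality directly from \eqref{ideal-global-correction-operator}, and strictness via comparing $\div(\VHkms\oplus\Wdivzero)=\div V_H^k = Q_H^k\cap L^2_0(\Omega)$ with the surjectivity of $\div$ onto $L^2_0(\Omega)$. You merely spell out a few details (trivial intersection, the infinite-dimensionality argument) that the paper leaves implicit.
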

\begin{proof}
We already verified that the dimensions of $\VHkms$ and $V_H^k$ are the same. To verify the decomposition \eqref{multiscale-decomposition-1}, note that an arbitrary function $\mathbf{v} \in \Hzerodiv$ can be written as
\begin{eqnarray*}  
  \mathbf{v} &=& \pi_H \mathbf{v} + (\mathbf{v} - \pi_H \mathbf{v} )  - \mathcal{C}(\pi_H \mathbf{v}) +  \mathcal{C}(\pi_H \mathbf{v}) \\
  &\overset{\pi_H\circ \mathcal{C} = 0}{=}& \underbrace{\pi_H \mathbf{v} - \mathcal{C}(\pi_H \mathbf{v})}_{\in \VHkms} \,\,+\,\, \underbrace{(\mathbf{v} + \mathcal{C}(\pi_H \mathbf{v}) - \pi_H ( \mathbf{v} + \mathcal{C}(\pi_H \mathbf{v}) )}_{\in W}.
\end{eqnarray*}
The $a$-orthogonality of $\VHkms$ and $\Wdivzero$ follows readily from \eqref{ideal-global-correction-operator}. Finally, since 
\begin{align*}
    \div(\VHkms \oplus \Wdivzero)= \div( \left(\id - \operatorname{\mathcal{C}}\right)V_H^k \oplus \Wdivzero) = \div V_H^k \subsetneq \div \Hzerodiv,
\end{align*}
the decomposition $\VHkms \oplus \Wdivzero$ must be a strict subset of $\Hzerodiv$.
\end{proof}
Using the ideal multiscale space $\VHkms$, the corresponding ideal multiscale approximation is given as follows.

\begin{definition}[Ideal multiscale approximation]
The ideal multiscale approximation is the solution $(\mathbf{u}_H^{\mathrm{ms}},p_H) \in \VHkms\times\left( Q_H^k \cap L^2_0(\Omega)\right)$ to the problem
\begin{equation}\label{ideal-mult-prob}
    \begin{aligned}
a\left(\mathbf{u}_{H}^{\mathrm{ms}}, \mathbf{v}_{H}^{\mathrm{ms}}\right)+b\left( \mathbf{v}_{H}^{\mathrm{ms}}, p_H\right) &=0 \hspace{65pt} \mathrm{for}\; \mathrm{all}\;\; \mathbf{v}_{H}^{\mathrm{ms}} \in \VHkms,\\ 
b\left(\mathbf{u}_{H}^{\mathrm{ms}}, q_H\right) &= -\left(f, q_H\right) \hspace{30pt} \mathrm{for}\; \mathrm{all}\;\; q_H \in Q_H^k.
\end{aligned}
\end{equation}
\end{definition}
 
The existence and uniqueness of a solution to the ideal multiscale problem \eqref{ideal-mult-prob} are established by the following lemma, which can be proved analogously to the corresponding result in \cite[Lemma 8]{hellman2016multiscale}. It is an application of Lemma \ref{lem-inf-sup} and follows from the inf-sup stability of $b(\cdot,\cdot)$ on the classical pair $V_H^k \times Q_H^k$ together with the $\Hdiv$-continuity of the corrector operator $\mathcal{C}$.

\begin{lemma}[Well-posedness of the ideal multiscale problem]\label{lemma:inf-sup-multiscale}
For the ideal problem \eqref{ideal-mult-prob}, we have the inf-sup stability
$$
\sup _{\mathbf{v}_{H}^{\mathrm{ms}} \in \VHkms \backslash\{\mathbf{0}\}} \frac{b(\mathbf{v}_{H}^{\mathrm{ms}}, q_H)}{\|q_H\|_{\Ltwo}\|\mathbf{v}_{H}^{\mathrm{ms}}\|_{\Hdiv}}\geq \rho\left(1+\alpha^{-1} \beta\right)^{-1}  \qquad\mathrm{for}\; \mathrm{all}\;\;q_H\in Q_H^k \cap L^2_0(\Omega)\backslash\{0\},
$$
where $\rho>0$ denotes the inf-sup constant from \eqref{inf-sup-classical-spaces}. In particular, there exists a unique solution $(\mathbf{u}_H^{\mathrm{ms}}, p_H) \in \VHkms\times \left( Q_H^k \cap L^2_0(\Omega)\right)$ to \eqref{ideal-mult-prob}.
\end{lemma}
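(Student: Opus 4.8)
The plan is to verify the four hypotheses of Lemma~\ref{lem-inf-sup} for the pair $(\mathcal{V},\mathcal{Q}) = (\VHkms, Q_H^k \cap L^2_0(\Omega))$ and then invoke that lemma. Conditions (A2) and (A3) are immediate: boundedness of $a(\cdot,\cdot)$ follows from the upper spectral bound $\beta$ of $\mathbf{A}^{-1}$ (indeed $|a(\mathbf{v},\mathbf{w})| \le \beta \|\mathbf{v}\|_{L^2(\Omega)}\|\mathbf{w}\|_{L^2(\Omega)}$), and boundedness of $b(\cdot,\cdot)$ is the Cauchy--Schwarz inequality. The crux of the argument is to transfer the classical inf-sup stability \eqref{inf-sup-classical-spaces} on $V_H^k \times Q_H^k$ to the multiscale pair; this is exactly what yields the stated constant $\rho(1+\alpha^{-1}\beta)^{-1}$, and I expect it to be the main obstacle.

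For the inf-sup condition (A4), I would proceed as follows. Fix $q_H \in Q_H^k \cap L^2_0(\Omega)\setminus\{0\}$. By \eqref{inf-sup-classical-spaces}, there exists $\mathbf{v}_H \in V_H^k\setminus\{\mathbf{0}\}$ with $b(\mathbf{v}_H,q_H) \ge \rho \|q_H\|_{\Ltwo}\|\mathbf{v}_H\|_{\Hdiv}$. Now set $\mathbf{v}_H^{\mathrm{ms}} := (\id - \mathcal{C})\mathbf{v}_H \in \VHkms$. The key observation is that $\mathcal{C}\mathbf{v}_H \in \Wdivzero$ is divergence-free, so $\div \mathbf{v}_H^{\mathrm{ms}} = \div \mathbf{v}_H$ and hence $b(\mathbf{v}_H^{\mathrm{ms}}, q_H) = b(\mathbf{v}_H, q_H)$. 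It remains to control $\|\mathbf{v}_H^{\mathrm{ms}}\|_{\Hdiv}$ from above by a constant multiple of $\|\mathbf{v}_H\|_{\Hdiv}$. Here I would use the $\Hdiv$-continuity of the corrector $\mathcal{C}$: from the corrector equation \eqref{ideal-global-correction-operator} (or rather the element-wise definition) together with coercivity and boundedness of $a(\cdot,\cdot)$ on $\Wdivzero$, one gets $\vertiii{\mathcal{C}\mathbf{v}_H} \le \vertiii{\mathbf{v}_H}$, and since the range of $\mathcal{C}$ is divergence-free, $\|\mathcal{C}\mathbf{v}_H\|_{\Hdiv}^2 = \|\mathcal{C}\mathbf{v}_H\|_{L^2(\Omega)}^2 \le \alpha^{-1}\vertiii{\mathcal{C}\mathbf{v}_H}^2 \le \alpha^{-1}\vertiii{\mathbf{v}_H}^2 \le \alpha^{-1}\beta\|\mathbf{v}_H\|_{L^2(\Omega)}^2 \le \alpha^{-1}\beta\|\mathbf{v}_H\|_{\Hdiv}^2$. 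By the triangle inequality, $\|\mathbf{v}_H^{\mathrm{ms}}\|_{\Hdiv} \le (1 + \sqrt{\alpha^{-1}\beta})\|\mathbf{v}_H\|_{\Hdiv}$; combining this with $b(\mathbf{v}_H^{\mathrm{ms}},q_H) = b(\mathbf{v}_H,q_H) \ge \rho\|q_H\|_{\Ltwo}\|\mathbf{v}_H\|_{\Hdiv}$ gives the claimed bound (possibly after replacing $(1+\sqrt{\alpha^{-1}\beta})$ by the slightly larger $(1+\alpha^{-1}\beta)$ to match the stated constant, which is harmless since $\mathbf{v}_H^{\mathrm{ms}} \ne \mathbf{0}$ because $\pi_H \mathbf{v}_H^{\mathrm{ms}} = \mathbf{v}_H \ne \mathbf{0}$).

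For coercivity (A1), I would identify the space $\mathcal{V}_{\operatorname{div}0}$ associated to the multiscale pair. A function $\mathbf{v}_H^{\mathrm{ms}} = (\id-\mathcal{C})\mathbf{v}_H \in \VHkms$ satisfies $b(\mathbf{v}_H^{\mathrm{ms}}, q_H) = 0$ for all $q_H \in Q_H^k$ iff $\div \mathbf{v}_H = \PH(\div \mathbf{v}_H) = 0$, i.e., iff $\mathbf{v}_H^{\mathrm{ms}}$ is itself divergence-free; for such $\mathbf{v}_H^{\mathrm{ms}}$ we have $a(\mathbf{v}_H^{\mathrm{ms}},\mathbf{v}_H^{\mathrm{ms}}) = \vertiii{\mathbf{v}_H^{\mathrm{ms}}}^2 \ge \alpha \|\mathbf{v}_H^{\mathrm{ms}}\|_{L^2(\Omega)}^2 = \alpha\|\mathbf{v}_H^{\mathrm{ms}}\|_{\Hdiv}^2$, so (A1) holds with $\tilde\alpha = \alpha$. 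Finally I apply Lemma~\ref{lem-inf-sup} with $(\tilde\alpha,\tilde\beta,\tilde\gamma) = (\alpha,\beta,\rho(1+\alpha^{-1}\beta)^{-1})$ to conclude existence and uniqueness of $(\mathbf{u}_H^{\mathrm{ms}},p_H) \in \VHkms \times (Q_H^k\cap L^2_0(\Omega))$; note that $\VHkms$ and $Q_H^k\cap L^2_0(\Omega)$ are finite-dimensional, hence closed, and that the sign conventions in the second equation of \eqref{ideal-mult-prob} match those of Lemma~\ref{lem-inf-sup} after recalling (from Section~\ref{Sec: Cts prob}) that testing against $Q_H^k$ is equivalent to testing against $Q_H^k\cap L^2_0(\Omega)$ since $\div \mathbf{u}_H^{\mathrm{ms}} \in L^2_0(\Omega)$ and $f \in L^2_0(\Omega)$. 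The only subtlety worth a sentence of care is the passage between the constants $(1+\sqrt{\alpha^{-1}\beta})$ and $(1+\alpha^{-1}\beta)$, and the observation that $\mathbf{v}_H^{\mathrm{ms}}$ constructed above is genuinely nonzero so that it is an admissible test function in the supremum.
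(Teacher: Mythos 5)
Your proof is correct and follows exactly the approach the paper indicates: verify Brezzi's conditions (Lemma~\ref{lem-inf-sup}) via the classical inf-sup stability \eqref{inf-sup-classical-spaces} on $V_H^k\times Q_H^k$ and the $\Hdiv$-stability of the corrector $\mathcal{C}$, using that $\div\mathcal{C}\mathbf{v}_H=0$ and $\vertiii{\mathcal{C}\mathbf{v}_H}\le\vertiii{\mathbf{v}_H}$. You in fact obtain the slightly sharper constant $\rho\bigl(1+\sqrt{\alpha^{-1}\beta}\bigr)^{-1}$, which implies the stated bound since $\alpha\le\beta$.
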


\subsection{Error estimate for ideal multiscale problem}\label{Sec: error ideal}

In this subsection, we demonstrate that the flux solution of the ideal multiscale problem converges to the reference solution in the energy norm at least with a linear rate with respect to $H$, but possibly higher order depending on the regularity of the source term $f$. Notably, this convergence is independent of the variations in $\mathbf{A}$, indicating the absence of a pre-asymptotic regime caused by the multiscale features.

\begin{theorem}[Error estimate for the ideal multiscale problem]\label{thm-error-ideal-problem}
Let $(\mathbf{u}, p) \in \Hzerodiv \times L^2_0(\Omega)$ and $(\mathbf{u}_{H}^{\mathrm{ms}}, p_H) \in \VHkms \times \left(Q_H^k \cap L^2_0(\Omega)\right)$ denote the solutions to \eqref{eqn-mixed-form-var} and \eqref{ideal-mult-prob}, respectively. Introducing 
$$\VHkms(f) \,:=\,\{\mathbf{v} \in \VHkms \,:\, \div \mathbf{v}=-\operatorname{P}_H \hspace{-2pt} f \},$$
we have the error bounds
\begin{align}
\vertiii{\mathbf{u}-\mathbf{u}_{H}^{\mathrm{ms}}} &\;=\; \inf_{\mathbf{v}_f \in \VHkms(f)} \vertiii{\mathbf{u}-\mathbf{v}_f}
\qquad\lesssim\;
H\left\| f-\operatorname{P}_H \hspace{-2pt}f \right\|_{\Ltwo}, \label{main-estimate-ideal-method} \\
\|\div \,(\mathbf{u}-\mathbf{u}_{H}^{\mathrm{ms}})\hspace{1pt}\|_{\Ltwo}  &\;=\; \inf_{q_H \in Q_H^k}\|\div \mathbf{u}-q_H\|_{\Ltwo} 
\;=\quad\;\left\| f-\operatorname{P}_H \hspace{-2pt}f \right\|_{\Ltwo}, \label{sub-estimate-2-ideal-method}
\end{align}
and
\begin{eqnarray}
\label{sub-estimate-1-ideal-method}
\|p- p_H\|_{\Ltwo} 
&\lesssim& H \left\| f-\operatorname{P}_H \hspace{-2pt}f \right\|_{\Ltwo} \,+\,
\left\| p-\operatorname{P}_H \hspace{-1pt}p \right\|_{\Ltwo}.
\end{eqnarray}
Further, if $f\in Q_H^k$, then the method is exact for the velocity and it holds $\mathbf{u} =  (\id- \mathcal{C})\hspace{1pt}\pi_H(\,\mathbf{u}\,) = \mathbf{u}_{H}^{\mathrm{ms}}$.
\end{theorem}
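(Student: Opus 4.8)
The plan is to exploit the LOD orthogonality and the commuting property of $\pi_H$ to reduce the mixed error analysis to an interpolation error bound. First I would establish the identity $\vertiii{\mathbf{u}-\mathbf{u}_H^{\mathrm{ms}}} = \inf_{\mathbf{v}_f \in \VHkms(f)} \vertiii{\mathbf{u}-\mathbf{v}_f}$, which is a C\'ea-type lemma: since $\div \mathbf{u} = -f$ and, by Assumption~\ref{assu-stab-inte}(iv), $\div \mathbf{u}_H^{\mathrm{ms}} = -\operatorname{P}_H f$ (the second equation of \eqref{ideal-mult-prob} with $q_H$ ranging over all of $Q_H^k$ forces this), the difference $\mathbf{u} - \mathbf{v}_f$ for any $\mathbf{v}_f\in\VHkms(f)$ lies in $\Hzerodivzero$, and one checks $\mathbf{u}_H^{\mathrm{ms}} - \mathbf{v}_f \in \Wdivzero$ using the decomposition $\Hzerodivzero = \VHdivzero\oplus\Wdivzero$ together with the fact that $\mathbf{v}_f \in \VHkms(f)$ is the unique element of $\VHkms$ with the prescribed divergence (again by \eqref{assu-stab-inte}(iv) and bijectivity of $(\id-\mathcal{C})$). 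Then the $a$-orthogonality \eqref{a-orthogonality} gives $a(\mathbf{u}-\mathbf{u}_H^{\mathrm{ms}}, \mathbf{u}_H^{\mathrm{ms}} - \mathbf{v}_f) = a(\mathbf{u} - \mathbf{v}_f, \mathbf{u}_H^{\mathrm{ms}} - \mathbf{v}_f) - a(\mathbf{u}_H^{\mathrm{ms}} - \mathbf{v}_f, \mathbf{u}_H^{\mathrm{ms}} - \mathbf{v}_f)$ — rather, one shows directly that $\mathbf{u}_H^{\mathrm{ms}}$ is the $a$-orthogonal projection of $\mathbf{u}$ onto the affine space $\VHkms(f)$, whence the Pythagorean identity yields the equality of the error with the best approximation.

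Next I would bound the best-approximation term. The natural candidate is $\mathbf{v}_f := (\id - \mathcal{C})\pi_H \mathbf{u}$. One must verify $\mathbf{v}_f \in \VHkms(f)$: clearly $\mathbf{v}_f \in \VHkms$, and $\div \mathbf{v}_f = \div(\id-\mathcal{C})\pi_H\mathbf{u} = \div \pi_H \mathbf{u} = \operatorname{P}_H(\div\mathbf{u}) = -\operatorname{P}_H f$ since $\mathcal{C}$ has divergence-free range and by the commuting property. Then $\mathbf{u} - \mathbf{v}_f = (\mathbf{u} - \pi_H\mathbf{u}) + \mathcal{C}\pi_H\mathbf{u}$. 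The key trick, standard in LOD analysis, is that $\mathbf{u} - \pi_H\mathbf{u} \in W$, and one relates $\mathcal{C}\pi_H\mathbf{u}$ to the correction of $\mathbf{u}-\pi_H\mathbf{u}$: since $\pi_H\mathbf{u} - \mathbf{u} = (\pi_H\mathbf{u} - \mathbf{u}) \in \ker\pi_H$, and $\mathcal{C}$ only "sees" the $a$-inner product against $\Wdivzero$, one can instead bound $\vertiii{\mathbf{u}-\mathbf{v}_f}$ by testing against a cleverly chosen $\mathbf{w}$. Concretely, writing $\mathbf{e} := \mathbf{u} - \mathbf{v}_f \in \Hzerodivzero$ (divergence-free since both have divergence $-\operatorname{P}_H f$... wait, $\div\mathbf{u}=-f\neq -\operatorname{P}_H f$ in general), so actually $\mathbf{e}$ has divergence $-(f - \operatorname{P}_H f)$; one splits $\mathbf{e} = \mathbf{e}_0 + \mathbf{z}$ where $\mathbf{z}\in V_H^k$ (or a suitable fine-scale lift) carries the divergence and $\mathbf{e}_0$ is divergence-free, then uses the defining equation of $\mathcal{C}$ to cancel the $\VHdivzero$-component. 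The cleanest route: show $a(\mathbf{u}-\mathbf{v}_f, \mathbf{w}) = 0$ for all $\mathbf{w}\in\VHdivzero$ — this holds because $a(\mathbf{u},\mathbf{w}) = -b(\mathbf{w},p) = 0$ (as $\div\mathbf{w}=0$) from \eqref{eqn-mixed-form-var}, and $a(\mathbf{v}_f,\mathbf{w}) = a((\id-\mathcal{C})\pi_H\mathbf{u},\mathbf{w})$, and by definition of $\mathcal{C}$, $a(\mathcal{C}\pi_H\mathbf{u},\mathbf{w}) = a(\pi_H\mathbf{u},\mathbf{w})$ for $\mathbf{w}\in\Wdivzero$, but here $\mathbf{w}\in\VHdivzero$ so this needs the full $\Hzerodivzero$ orthogonality — so instead I'd pick $\mathbf{v}_f$ so that $\mathbf{u}-\mathbf{v}_f$ is $a$-orthogonal to $\Wdivzero$, which is automatic since $\mathbf{u}-\mathbf{u}_H^{\mathrm{ms}}$ is and $\mathbf{u}_H^{\mathrm{ms}}-\mathbf{v}_f\in\Wdivzero$. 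Thus $\vertiii{\mathbf{u}-\mathbf{v}_f}^2 = a(\mathbf{u}-\mathbf{v}_f, \mathbf{u}-\pi_H\mathbf{u})$ after discarding the $\Wdivzero$-part, and Cauchy--Schwarz plus the interpolation error estimate of Remark~\ref{remark-error-estimates-piH} (with $\operatorname{P}_H(\div\mathbf{u}) = \operatorname{P}_H(-f)$, and the best $\mathbf{v}_K$-approximation term absorbed since $\mathbf{u}$ is only $\Hdiv$; actually the relevant term is $H\|\div\mathbf{u}-\operatorname{P}_H\div\mathbf{u}\|=H\|f-\operatorname{P}_H f\|$) delivers $\vertiii{\mathbf{u}-\mathbf{u}_H^{\mathrm{ms}}}\lesssim H\|f-\operatorname{P}_H f\|_{\Ltwo}$. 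I would need to be careful that the $L^2$-best-approximation-by-$\mathcal{RT}_k$ term in Remark~\ref{remark-error-estimates-piH} does not spoil the estimate; it does not, because after projecting onto the $a$-orthogonal complement of $\Wdivzero$ the estimate only involves the divergence mismatch $f - \operatorname{P}_H f$ — this is the one delicate point and is precisely where the $\Hdiv$-stable $\pi_H$ (rather than the unstable nodal $\operatorname{I}_H$) earns its keep.

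For \eqref{sub-estimate-2-ideal-method}, the divergence error is immediate: $\div(\mathbf{u}-\mathbf{u}_H^{\mathrm{ms}}) = -f + \operatorname{P}_H f = -(f - \operatorname{P}_H f)$, and since $\div\mathbf{u}_H^{\mathrm{ms}}\in Q_H^k$ while $\operatorname{P}_H f$ is the $L^2$-best approximation of $f=-\div\mathbf{u}$ in $Q_H^k$, both the equality with $\inf_{q_H}\|\div\mathbf{u}-q_H\|$ and with $\|f-\operatorname{P}_H f\|$ follow. For \eqref{sub-estimate-1-ideal-method}, I would use the inf-sup stability of Lemma~\ref{lemma:inf-sup-multiscale}: for any $q_H\in Q_H^k\cap L^2_0(\Omega)$, the first equation of \eqref{ideal-mult-prob} and \eqref{eqn-mixed-form-var} give $b(\mathbf{v}_H^{\mathrm{ms}}, q_H - p_H) = b(\mathbf{v}_H^{\mathrm{ms}}, q_H - p) - a(\mathbf{u}-\mathbf{u}_H^{\mathrm{ms}},\mathbf{v}_H^{\mathrm{ms}})$ for all $\mathbf{v}_H^{\mathrm{ms}}\in\VHkms$ (using that $b(\mathbf{v}_H^{\mathrm{ms}}, p) = b(\mathbf{v}_H^{\mathrm{ms}}, \operatorname{P}_H p)$ since $\div\mathbf{v}_H^{\mathrm{ms}}\in Q_H^k$), so dividing by $\|\mathbf{v}_H^{\mathrm{ms}}\|_{\Hdiv}$, taking the supremum, and invoking inf-sup yields $\|p_H - \operatorname{P}_H p\|_{\Ltwo}\lesssim \vertiii{\mathbf{u}-\mathbf{u}_H^{\mathrm{ms}}} + \|\operatorname{P}_H p - p\|_{\Ltwo}$... more precisely $\|q_H - p_H\|$ with $q_H = \operatorname{P}_H p$ (noting $\operatorname{P}_H p\in L^2_0(\Omega)$ since $p\in L^2_0(\Omega)$), then the triangle inequality with \eqref{main-estimate-ideal-method} gives \eqref{sub-estimate-1-ideal-method}. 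Finally, if $f\in Q_H^k$ then $f - \operatorname{P}_H f = 0$, so $\vertiii{\mathbf{u}-\mathbf{u}_H^{\mathrm{ms}}} = 0$ by \eqref{main-estimate-ideal-method}, i.e. $\mathbf{u} = \mathbf{u}_H^{\mathrm{ms}}$; and since in this case $\div(\id-\mathcal{C})\pi_H\mathbf{u} = \operatorname{P}_H(\div\mathbf{u}) = \operatorname{P}_H(-f) = -f = \div\mathbf{u}$ and $(\id-\mathcal{C})\pi_H\mathbf{u}\in\VHkms\subset\Hzerodiv$ with $\mathbf{u}-(\id-\mathcal{C})\pi_H\mathbf{u}$ then divergence-free and $a$-orthogonal to $\Wdivzero$ — and one must also check it is in $\VHdivzero$, which follows from $a$-orthogonality to $\Wdivzero$ forcing it to be the zero element once we know it lies in $\Hzerodivzero = \VHdivzero\oplus\Wdivzero$ and is $a$-orthogonal to the second summand while the best approximation is exact — we get $\mathbf{u} = (\id-\mathcal{C})\pi_H\mathbf{u} = \mathbf{u}_H^{\mathrm{ms}}$. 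The main obstacle I anticipate is the bookkeeping in the C\'ea-type argument: correctly tracking which differences are divergence-free versus carry the divergence $-(f-\operatorname{P}_H f)$, and ensuring the interpolation estimate from Remark~\ref{remark-error-estimates-piH} is applied to a quantity whose $\mathcal{RT}_k$-best-approximation term genuinely vanishes or is controlled, so that only the $H\|f-\operatorname{P}_H f\|$ term survives.
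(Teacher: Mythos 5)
Your treatment of \eqref{sub-estimate-2-ideal-method}, \eqref{sub-estimate-1-ideal-method}, and the exactness statement for $f\in Q_H^k$ matches the paper's, and you correctly identify the C\'ea-type equality $\vertiii{\mathbf{u}-\mathbf{u}_H^{\mathrm{ms}}}=\inf_{\mathbf{v}_f}\vertiii{\mathbf{u}-\mathbf{v}_f}$ via the Galerkin orthogonality from the second mixed equation. (One side remark: your repeated claim that $\mathbf{u}_H^{\mathrm{ms}}-\mathbf{v}_f\in\Wdivzero$ is false --- it is a divergence-free element of $\VHkms$, not of $W=\ker\pi_H$ --- but your fallback argument, that $a(\mathbf{u}-\mathbf{u}_H^{\mathrm{ms}},\mathbf{u}_H^{\mathrm{ms}}-\mathbf{v}_f)=0$ follows from the Galerkin orthogonality together with $\div(\mathbf{u}_H^{\mathrm{ms}}-\mathbf{v}_f)=0$, does the job.)

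The genuine gap is in the estimate of the infimum. Your candidate $\mathbf{v}_f:=(\id-\mathcal{C})\pi_H\mathbf{u}$ is indeed in $\VHkms(f)$, and the reduction $\vertiii{\mathbf{u}-\mathbf{v}_f}^2=a(\mathbf{u}-\mathbf{v}_f,(\id-\pi_H)\mathbf{u})$ is correct because $\mathbf{u}-\mathbf{v}_f$ is $a$-orthogonal to $\Wdivzero$ and $\mathcal{C}\pi_H\mathbf{u}\in\Wdivzero$. But the next step fails: Cauchy--Schwarz yields $\vertiii{\mathbf{u}-\mathbf{v}_f}\le\vertiii{(\id-\pi_H)\mathbf{u}}$, and the interpolation estimate of Remark~\ref{remark-error-estimates-piH} for $(\id-\pi_H)\mathbf{u}$ carries the element-wise $\mathcal{RT}_k$-best-approximation term $\min_{\mathbf{v}_K}\|\mathbf{u}-\mathbf{v}_K\|_{L^2(K)}$, which is \emph{not} $O(H\|f-\operatorname{P}_Hf\|)$ for a general $\mathbf{u}\in\Hdiv$ (here $\mathbf{u}=\mathbf{A}\nabla p$ is only $L^2$ in a true multiscale setting). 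Your proposed fix --- ``after projecting onto the $a$-orthogonal complement of $\Wdivzero$ the estimate only involves the divergence mismatch'' --- does not close this: projecting $(\id-\pi_H)\mathbf{u}$ onto the $a$-orthogonal complement of $\Wdivzero$ returns exactly $\mathbf{u}-\mathbf{v}_f$, so the argument becomes circular, and replacing $(\id-\pi_H)\mathbf{u}$ by any other $\boldsymbol\phi\in W$ with $\div\boldsymbol\phi=-(f-\operatorname{P}_Hf)$ still requires bounding $\vertiii{\boldsymbol\phi}$ by $H\|f-\operatorname{P}_Hf\|$, which is not an interpolation-error fact. The paper escapes this by choosing a different competitor $\mathbf{v}_f:=\mathbf{u}(\operatorname{P}_Hf)$, the exact solution of \eqref{eqn-mixed-form-var} with source $\operatorname{P}_Hf$: it first proves (via the $\VHkms\oplus W$ decomposition and the $a$-orthogonality \eqref{a-orthogonality}, showing the $W$-component $\mathbf{w}_f$ vanishes) that $\mathbf{u}(\operatorname{P}_Hf)=(\id-\mathcal{C})\pi_H\mathbf{u}(\operatorname{P}_Hf)\in\VHkms(f)$, and then estimates $\vertiii{\mathbf{u}-\mathbf{u}(\operatorname{P}_Hf)}\lesssim H\|f-\operatorname{P}_Hf\|$ by a duality-type argument: since $\mathbf{u}-\mathbf{u}(\operatorname{P}_Hf)$ solves the mixed problem with source $f-\operatorname{P}_Hf$, one gets $\vertiii{\mathbf{u}-\mathbf{u}(\operatorname{P}_Hf)}^2=(f-\operatorname{P}_Hf,\,p-p(\operatorname{P}_Hf))$, and inserting $\operatorname{P}_H$ against $f-\operatorname{P}_Hf$ plus the local Poincar\'e inequality and the identity $\nabla(p-p(\operatorname{P}_Hf))=\mathbf{A}^{-1}(\mathbf{u}-\mathbf{u}(\operatorname{P}_Hf))$ from \eqref{H1-estimate-pressure} produces the factor $H$. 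This duality step, exploiting the $H^1$-regularity of the pressure rather than any regularity of $\mathbf{u}$, is the idea your proposal is missing.
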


Before presenting the proof of the theorem, let us briefly discuss the results. First of all, estimate \eqref{main-estimate-ideal-method} guarantees that $\mathbf{u}_{H}^{\mathrm{ms}}$ converges at least with linear rate in $L^2(\Omega)$ to the exact solution $\mathbf{u}$. In particular, this convergence is independent of the regularity of $\mathbf{u}$ and the variations of $\mathbf{A}$. If $f$ admits higher regularity, e.g., $f \in H^{k+1}(T)$ for all $T\in \mathcal{T}_H$, then the rate of convergence increases to $O(H^{k+2})$, again, independent of the regularity of $\mathbf{u}$ or $\mathbf{A}$. The bound \eqref{sub-estimate-2-ideal-method} ensures that the convergence of the error in the $\Hdiv$-norm only drops by one order and we can still expect the convergence rate $O(H^{k+1})$ for regular $f$. If $f\in Q_H^k$, then the ideal multiscale method is exact for the velocity, that is, $\mathbf{u}=\mathbf{u}_{H}^{\mathrm{ms}}$.

In contrast, the flavor of the bound \eqref{sub-estimate-1-ideal-method} on the approximation error of the pressure is different, as it requires additional regularity of the exact solution $p$ to obtain higher rates of convergence. In the minimal regularity setting, that is, $p\in H^1(\Omega)$, the estimate \eqref{sub-estimate-1-ideal-method} implies a linear convergence in the mesh size $H$. Since $\| \nabla p \|_{L^2(\Omega)} = \| \mathbf{A}^{-1} \mathbf{u} \|_{L^2(\Omega)} \lesssim \| f \|_{L^2(\Omega)} $, this linear convergence is independent of the variations of $\mathbf{A}$. Hence, for $p\in H^1(\Omega)$ we have $\|p- p_H\|_{L^2(\Omega)} \lesssim H \| f \|_{L^2(\Omega)} $. However, in a realistic multiscale setting, this estimate cannot be improved since higher derivatives of $p$ (if they exist) would scale with the variations of $\mathbf{A}$ in a negative way. For example, if $\mathbf{A}$ oscillates at a scale of order $\varepsilon \ll H$, then we would expect $\| p \|_{H^{1+m}(\Omega)} \lesssim \varepsilon^{-m}$ and there is formally nothing to gain for $m> 0$. Yet, this error can still appear small, if there is a smooth homogenized solution nearby, say some $p^0 \in H^{k+1}(\Omega)$ with $\| p - p^0 \|_{L^2(\Omega)} \lesssim \varepsilon \ll H$. In this case, the error can behave as $\| p -\operatorname{P}_H \hspace{-1pt}p \|_{\Ltwo} \lesssim \varepsilon + H^{k+1}$.

The proof of Theorem \ref{thm-error-ideal-problem} given below loosely adopts and extends the arguments from \cite[Lemma 9]{hellman2016multiscale}. 
\begin{proof}
\textbf{Step 1.} We start by proving \eqref{sub-estimate-2-ideal-method}.

First, we note that the second equation in \eqref{ideal-mult-prob} implies that
\begin{align*}
(\div \mathbf{u}_{H}^{\mathrm{ms}}, q_H )_{\Ltwo} &=(-f, q_H)_{\Ltwo} = ( -\operatorname{P}_H \hspace{-1pt}f , q_H)_{\Ltwo}\qquad \mbox{for all } \, q_H \in Q_H^k.
\end{align*}
Since $\div \mathbf{u}_{H}^{\mathrm{ms}} \in \div \VHkms \subset Q_H^k$ and $\operatorname{P}_H \hspace{-1pt}f \in Q_H^k$, we deduce that $\,\div \mathbf{u}_{H}^{\mathrm{ms}} = -\operatorname{P}_H \hspace{-2pt}f$. This yields
$$
\div \left(\mathbf{u}-\mathbf{u}_{H}^{\mathrm{ms}}\right) \,\,=\,\, \div \mathbf{u}- \div \mathbf{u}_{H}^{\mathrm{ms}} =  -f - (-P_Hf) \,\,=\,\, \operatorname{P}_H \hspace{-2pt}f-f ,
$$
which proves \eqref{sub-estimate-2-ideal-method} by the properties of the $L^2$-projection $\operatorname{P}_H:L^2(\Omega)\rightarrow Q_H^k$.

\textbf{Step 2.} Our next goal is to prove \eqref{main-estimate-ideal-method}. 

\textbf{Step 2a.} To this end, we begin by showing that
\begin{align}\label{Step2a}
    \vertiii{\mathbf{u}-\mathbf{u}_{H}^{\mathrm{ms}}} \leq \vertiii{\mathbf{u}-\mathbf{v}_f}\qquad \mbox{for all } \mathbf{v}_f \in \VHkms(f),
\end{align}
which then implies the first equality in \eqref{main-estimate-ideal-method} in view of the fact that $\mathbf{u}_{H}^{\mathrm{ms}} \in \VHkms(f)$.

Using again that $\,\div \mathbf{u}_{H}^{\mathrm{ms}} = -\operatorname{P}_H \hspace{-2pt}f$, we conclude for arbitrary  $\mathbf{v}_f \in \VHkms(f)$ that $\div (\mathbf{u}_{H}^{\mathrm{ms}}- \mathbf{v}_f)=0$, or equivalently,
\begin{align}
\label{error-in-kernel-B}
b(\mathbf{u}_{H}^{\mathrm{ms}}- \mathbf{v}_f, q) = 0 \qquad \mbox{for all } q\in L^2(\Omega). 
\end{align}
Observing the Galerkin orthogonality
\begin{align} \label{ideal-galerkin-orthogonality-1}
\begin{split}
a\left(\mathbf{u} - \mathbf{u}_{H}^{\mathrm{ms}}, \mathbf{v}\right)+b\left(\mathbf{v}, p - p_H\right) =0 & \qquad  \mbox{for all } \mathbf{v} \in \VHkms,\\ 
b\left(\mathbf{u} - \mathbf{u}_{H}^{\mathrm{ms}}, q_H\right) =0  & \qquad \mbox{for all } q_H \in Q_H^k,
\end{split}
\end{align}
we combine \eqref{error-in-kernel-B} with the first equation in \eqref{ideal-galerkin-orthogonality-1} for $\mathbf{v}= \mathbf{u}_{H}^{\mathrm{ms}}- \mathbf{v}_f$ to find that
\begin{align*}
a\left(\mathbf{u} - \mathbf{u}_{H}^{\mathrm{ms}}, \mathbf{u}_{H}^{\mathrm{ms}}- \mathbf{v}_f \right) \,\,=\,\, 0
\qquad \mbox{for all } \mathbf{v}_f \in \VHkms(f).
\end{align*}
Hence, for any $\mathbf{v}_f \in \VHkms(f)$, we have that
\begin{eqnarray*}
\vertiii{\mathbf{u}-\mathbf{u}_{H}^{\mathrm{ms}}}^2 &=&  a\left(\mathbf{u} - \mathbf{u}_{H}^{\mathrm{ms}}, \mathbf{u}-\mathbf{v}_f\right) - a\left(\mathbf{u} - \mathbf{u}_{H}^{\mathrm{ms}}, \mathbf{u}_{H}^{\mathrm{ms}}-\mathbf{v}_f\right) \\
&=&  a\left(\mathbf{u} - \mathbf{u}_{H}^{\mathrm{ms}}, \mathbf{u}-\mathbf{v}_f\right)\\
&\leq& \vertiii{\mathbf{u}-\mathbf{u}_{H}^{\mathrm{ms}}}\, \vertiii{\mathbf{u}-\mathbf{v}_f},
\end{eqnarray*}
and the claimed inequality \eqref{Step2a} follows. 

\textbf{Step 2b.} Introducing $(\,\mathbf{u}(\operatorname{P}_H \hspace{-1pt}f),p(\operatorname{P}_H \hspace{-1pt}f)\,)\in \Hzerodiv \times L^2_0(\Omega)$ to be the exact solution of problem \eqref{eqn-mixed-form-var} with the source function $f$ replaced by $\operatorname{P}_H\hspace{-2pt}f$, we want to verify that $\mathbf{u}(\operatorname{P}_H \hspace{-1pt}f)\in \VHkms(f)$. 

Clearly, it holds 
\begin{align}\label{divuPHf}
    \div \mathbf{u}(\operatorname{P}_H \hspace{-1pt}f) = -\operatorname{P}_H\hspace{-1pt} f
\end{align}
so that we only need to show that $\mathbf{u}(\operatorname{P}_H \hspace{-1pt}f)\in \VHkms$. For that, we define the auxiliary function
\begin{align*}
\mathbf{u}_{H}^{\mathrm{ms}}(\operatorname{P}_H \hspace{-1pt}f) := (\id- \mathcal{C})\hspace{1pt}\pi_H(\,\mathbf{u}(\operatorname{P}_H \hspace{-1pt}f)\,) \,\, \in \,\,\VHkms.
\end{align*}
Noting that $\div \circ \,\mathcal{C}=0$, and using the commuting property from Assumption \ref{assu-stab-inte}, we observe that
\begin{align}\label{divuHmsPHf}
\div \mathbf{u}_{H}^{\mathrm{ms}}(\operatorname{P}_H \hspace{-1pt}f) \, =\, \div \pi_H(\,\mathbf{u}(\operatorname{P}_H \hspace{-1pt}f)\,)
\,=\, \operatorname{P}_H(\div \mathbf{u}(\operatorname{P}_H \hspace{-1pt}f) ) \,=\, -(\operatorname{P}_H \circ \operatorname{P}_H)f\,=\, -\operatorname{P}_H \hspace{-2pt}f.
\end{align}
Since $\Hzerodiv = \VHkms \oplus W$, we can write  $\mathbf{u}(\operatorname{P}_H \hspace{-1pt}f)\in \Hzerodiv$ as
\begin{align*}
\mathbf{u}(\operatorname{P}_H \hspace{-1pt}f) = \mathbf{u}_{H}^{\mathrm{ms}}(\operatorname{P}_H \hspace{-1pt}f) + \mathbf{w}_f \qquad \mbox{for some } \mathbf{w}_f \in W.
\end{align*}
Noting that by \eqref{divuPHf} and \eqref{divuHmsPHf} it holds \,$\div \mathbf{w}_f =0$, we see that $\mathbf{w}_f = \mathbf{u}(\operatorname{P}_H \hspace{-1pt}f) - \mathbf{u}_{H}^{\mathrm{ms}}(\operatorname{P}_H \hspace{-1pt}f) \in \Wdivzero$ (recall the definition of $\Wdivzero$ from \eqref{def-Wdivzero}). Hence, using the definition of $\mathbf{u}(\operatorname{P}_H \hspace{-1pt}f)$ and the $a$-orthogonality of $\Wdivzero$ and $\VHkms$ (see Corollary \ref{corollary-decompositions}), we find that
\begin{eqnarray*}
a(\mathbf{w}_f,\mathbf{w}_f) \,\,\,=\,\,\, a(\mathbf{u}(\operatorname{P}_H \hspace{-1pt}f) - \mathbf{u}_{H}^{\mathrm{ms}}(\operatorname{P}_H \hspace{-1pt}f) ,\mathbf{w}_f )
&=& a(\mathbf{u}(\operatorname{P}_H \hspace{-1pt}f)  , \mathbf{w}_f )
\,\,\,=\,\,\, - b( \mathbf{w}_f  , p(\operatorname{P}_H \hspace{-1pt}f) )
\overset{\div \mathbf{w}_f = 0}{=} 0,
\end{eqnarray*}
i.e., $\mathbf{w}_f = \mathbf{u}(\operatorname{P}_H \hspace{-1pt}f) - \mathbf{u}_{H}^{\mathrm{ms}}(\operatorname{P}_H \hspace{-1pt}f) = \mathbf{0}$, and hence, $\mathbf{u}(\operatorname{P}_H \hspace{-1pt}f) = \mathbf{u}_{H}^{\mathrm{ms}}(\operatorname{P}_H \hspace{-1pt}f)\in \VHkms$. 

\textbf{Step 2c.} Combining the results from Steps 2a and 2b, we arrive at the bound
\begin{align}\label{umuHms bd1}
\vertiii{\mathbf{u}-\mathbf{u}_{H}^{\mathrm{ms}}} \le \vertiii{\mathbf{u}-\mathbf{u}(\operatorname{P}_H \hspace{-1pt}f) },
\end{align}
and it remains to show that 
\begin{align}\label{Step 2c}
    \vertiii{\mathbf{u}-\mathbf{u}(\operatorname{P}_H \hspace{-1pt}f) } \lesssim H \, \|  f - \operatorname{P}_H \hspace{-1pt}f \|_{\Ltwo}.
\end{align}
To this end, we observe that the pair $\left(\mathbf{u}-\mathbf{u}(\operatorname{P}_H \hspace{-1pt}f) ,p-p(\operatorname{P}_H \hspace{-1pt}f)\right) \in \Hzerodiv\times L^2_0(\Omega)$ is the unique solution to the mixed problem 
\begin{equation*}
    \begin{aligned}
a( \mathbf{u}-\mathbf{u}(\operatorname{P}_H \hspace{-1pt}f) , \mathbf{v} ) \,\,+\,\, b ( \mathbf{v}, p-p(\operatorname{P}_H \hspace{-1pt}f) ) &\,\,=\,\,0 \hspace{95pt} \mbox{for all } \, \mathbf{v} \in \Hzerodiv,\\ 
b( \mathbf{u}-\mathbf{u}(\operatorname{P}_H \hspace{-1pt}f) , q ) &\,\,=\,\, -(f - \operatorname{P}_H \hspace{-1pt}f, q )_{L^2(\Omega)} \hspace{15pt} \mbox{for all } \, q \in L^2(\Omega).
\end{aligned}
\end{equation*}
Consequently, we have with $\mathbf{v}=\mathbf{u}-\mathbf{u}(\operatorname{P}_H \hspace{-1pt}f)$ and $q=p-p(\operatorname{P}_H \hspace{-1pt}f)$ that
\begin{eqnarray*}
a( \, \mathbf{u}-\mathbf{u}(\operatorname{P}_H \hspace{-1pt}f),  \mathbf{u}-\mathbf{u}(\operatorname{P}_H \hspace{-1pt}f) \,)
&=& ( f - \operatorname{P}_H \hspace{-1pt}f , p-p(\operatorname{P}_H \hspace{-1pt}f) )_{L^2(\Omega)} \\
&=& ( f - \operatorname{P}_H \hspace{-1pt}f , \, p-p(\operatorname{P}_H \hspace{-1pt}f) \, - \, \operatorname{P}_H \hspace{-1pt}\left( p-p(\operatorname{P}_H \hspace{-1pt}f) \right) )_{L^2(\Omega)} \\
&\lesssim& H\,\|  f - \operatorname{P}_H \hspace{-1pt}f \|_{\Ltwo} \, 
  \| \nabla (p-p(\operatorname{P}_H \hspace{-1pt}f)) \|_{\Ltwo} \\
&\overset{\eqref{H1-estimate-pressure}}{=}&  H\, \|  f - \operatorname{P}_H \hspace{-1pt}f \|_{\Ltwo} \, 
 \| \mathbf{A}^{-1} (\mathbf{u}-\mathbf{u}(\operatorname{P}_H \hspace{-1pt}f)) \|_{\Ltwo}\\
 &\lesssim& H\,\|  f - \operatorname{P}_H \hspace{-1pt}f \|_{\Ltwo} \, 
  \vertiii{\mathbf{u}-\mathbf{u}(\operatorname{P}_H \hspace{-1pt}f) }.
\end{eqnarray*}
Dividing by $\vertiii{\mathbf{u}-\mathbf{u}(\operatorname{P}_H \hspace{-1pt}f) }$ yields \eqref{Step 2c}, which in combination with \eqref{umuHms bd1} concludes the proof of \eqref{main-estimate-ideal-method}.

\textbf{Step 3.} Finally, we prove the error bound \eqref{sub-estimate-1-ideal-method}.

To this end, we split the error as
\begin{align*}
\| p - p_H \|_{\Ltwo} \le \| p - \operatorname{P}_H\hspace{-1pt}p \|_{\Ltwo} + \| p_H - \operatorname{P}_H\hspace{-1pt}p \|_{\Ltwo}.
\end{align*}
To estimate the latter term on the right-hand side, we use the inf-sup stability in Lemma \ref{lemma:inf-sup-multiscale} to obtain that
\begin{align*}
    \| p_H - \operatorname{P}_H\hspace{-1pt}p \|_{\Ltwo} &\,\lesssim\, \sup_{\mathbf{v}_{H}^{\mathrm{ms}}\in V_H^{k,\mathrm{ms}}\backslash\{\mathbf{0}\}}\frac{\lvert b(\mathbf{v}_{H}^{\mathrm{ms}},p_H-\operatorname{P}_H\hspace{-1pt}p)\rvert}{\|\mathbf{v}_{H}^{\mathrm{ms}}\|_{\Hdiv}}\,\\ &\leq\, \sup_{\mathbf{v}_{H}^{\mathrm{ms}}\in V_H^{k,\mathrm{ms}}\backslash\{\mathbf{0}\}}\frac{\lvert a(\mathbf{u}-\mathbf{u}_{H}^{\mathrm{ms}},\mathbf{v}_{H}^{\mathrm{ms}})\rvert+\lvert b(\mathbf{v}_{H}^{\mathrm{ms}},p-\operatorname{P}_H\hspace{-1pt}p)\rvert}{\|\mathbf{v}_{H}^{\mathrm{ms}}\|_{\Hdiv}} \\
    &\,\lesssim\, \vertiii{\mathbf{u}-\mathbf{u}_{H}^{\mathrm{ms}}} + \|p -\operatorname{P}_H\hspace{-1pt}p\|_{\Ltwo},
\end{align*}
where we used that $\lvert a(\mathbf{w},\mathbf{z})\rvert \leq \vertiii{w}\vertiii{z} \lesssim \vertiii{w} \|z\|_{\Hdiv}$ for any $\mathbf{w},\mathbf{z}\in \Hzerodiv$. Finally, using \eqref{main-estimate-ideal-method} to bound the term $\vertiii{\mathbf{u}-\mathbf{u}_{H}^{\mathrm{ms}}}$ completes the proof. 
\end{proof}

\section{Localized multiscale method}\label{Sec: localized}

The optimal corrector problems \eqref{ideal-correction-operator} require nearly the same computational effort as the original multiscale problem \eqref{eqn-mixed-form} since they are formulated on the full domain $\Omega$ and are hence global problems. To address this issue, it is crucial to confine these problems to small regions while maintaining their high approximation quality. This strategy enables efficient, low-cost computations that can be executed entirely in parallel.

In the following, we will introduce the localized method as it can be used in practice. In particular, we state the localized orthogonal decomposition (LOD) in $\Hdiv$-spaces. In our setting, the LOD refers to a localized version of the $a$-orthogonal splitting $\VHkms \oplus \Wdivzero$ as stated in Corollary \ref{corollary-decompositions}. The localization is achieved by truncating the element corrector problems \eqref{ideal-correction-operator} to small neighborhoods of coarse elements $T \in \mathcal{T}_H$. As in the classical LOD for $H^1$-conforming spaces, the truncation will be justified by the exponential decay of the element correctors away from the coarse element $T$. Using this approach, we construct an approximate basis for the multiscale space $\VHkms$ that is spatially localized, relying on the localized correction operator as an analog to the ideal correction operator. This localization significantly reduces the computational effort required to assemble the multiscale space.

\begin{remark}[Contractible $\Omega$]
In order to ensure that all localized problems appearing in this section are well-defined, and that we have local regular decompositions of $\Hzerodivomega{\omega}$ for suitable patch neighborhoods $\omega$, we require that these patches are always simply-connected. To avoid complicated assumptions on the mesh geometry, we will simply assume in this section that, in addition to the previous assumptions, the domain $\Omega$ is contractible.
\end{remark}

For a proper definition of the localized correctors, we need to restrict the detail space $\Wdivzero$ to suitable local subdomains. These subdomains are formed by element patches $N^m(T)$. To be precise, for any coarse element $T \in \mathcal{T}_H$ (which we recall as closed sets), we define $N^m(T)$ for a given number of layers $m \in \mathbb{N}_0$ recursively as follows: 
$$
\begin{aligned}
N^0(T):=T, \qquad N^m(T):=\bigcup\left\{T^{\prime} \in \mathcal{T}_H: T^{\prime} \cap N^{m-1}(T) \neq \emptyset\right\}, \qquad m \in \mathbb{N}.
\end{aligned}
$$
For a given patch $N^m(T)$, the restriction of $W$ to $N^m(T)$ is given by
$$
W(N^m(T))\,\,:=\,\,\{\mathbf{w} \in W: \mathbf{w}=\mathbf{0} \text { in } \Omega \backslash N^m(T)\}
$$
and the restriction of $\Wdivzero$ by
$$
\Wdivzero(N^m(T))\,\,:=\,\,\{\mathbf{w} \in W(N^m(T)): \div \mathbf{w}=0\}.
$$

We also introduce the corresponding cut-off functions, which play a central role in the proof. For $T \in \mathcal{T}_H$ and $m\in \mathbb{N}$, let $\eta_T^m \in \{ v\in C^0(\overline{\Omega}) \, : \, \left. v\right\rvert_T \in \mathbb{P}_1(T) \mbox{ for all } T \in \mathcal{T}_H\}$ (i.e., $\eta_T^m$ is a $\mathbb{P}_1$ Lagrange finite element function) be the function that is uniquely determined by the properties
\begin{equation}\label{cut-off-func}
\eta_T^m(\mathbf{x}) = 
\begin{cases} 
0 &, \text{ if }\mathbf{x} \in N^{m}(T), \\ 
1 &, \text{ if }\mathbf{x} \in \Omega \setminus N^{m+1}(T). 
\end{cases}
\end{equation}
Note that $\eta_T^m\in W^{1,\infty}(\Omega)$ and $\| \nabla \eta_T^m \|_{L^{\infty}(\Omega)} \lesssim H^{-1}$.

\subsection{Localized multiscale approximation and main result}\label{Sec: loc mult app}

In this section, we introduce the localized multiscale method and present a corresponding a priori error estimate in the energy norm. 

As a first step, we define the localized correction operators.
\begin{definition}[Localized correction operators]
\label{definition:localized-corrector}
For given $T \in \mathcal{T}_H$ and a number of layers $m \in \mathbb{N}$, the localized element correction operator $\mathcal{C}_T^m: \Hzerodiv \rightarrow W_{\operatorname{div}\hspace{-1pt}0}(N^m(T)) \subset \Wdivzero$ is defined such that, for any $\mathbf{v}\in \Hzerodiv$, the function $\operatorname{\mathcal{C}}_T^m \mathbf{v} \in W_{\operatorname{div}\hspace{-1pt}0}(N^m(T))$ denotes the unique solution to the problem
\begin{equation}\label{Cmt prob}
a\left(\operatorname{\mathcal{C}}_T^m \mathbf{v}, \mathbf{w}\right)=a_T(\mathbf{v}, \mathbf{w})\qquad \mathrm{for}\; \mathrm{all}\;\; \mathbf{w} \in W_{\operatorname{div}\hspace{-1pt}0}(N^m(T)).
\end{equation}
The corresponding approximation of the global correction operator $\operatorname{\mathcal{C}}$ is given by the sum of local contributions, i.e.,
$$\operatorname{\mathcal{C}}^m:=\sum_{T \in \mathcal{T}_H} \operatorname{\mathcal{C}}_T^m.$$
\end{definition}

Just like the ideal correctors, the localized versions are still well-posed  by the Lax--Milgram theorem due to the coercivity and boundedness of $a(\cdot, \cdot)$ on the closed space $W_{\operatorname{div}\hspace{-1pt}0}(N^m(T))$. 

\begin{remark}
The implementation of \eqref{Cmt prob} can be achieved using Lagrange multipliers. Note that \eqref{Cmt prob} is equivalent to the constrained optimization problem
\begin{equation}
\min_{\mathbf{w} \in W_{\operatorname{div}\hspace{-1pt}0}(N^m(T))} \left\{\frac{1}{2}\,a( \mathbf{w}, \mathbf{w}) - a_T(\mathbf{v}, \mathbf{w})\right\}.
\end{equation}
By introducing Lagrange multipliers, we consider the Lagrangian functional
\begin{equation}
\mathcal{L}(\mathbf{w}, \lambda_1, \boldsymbol{\lambda_2}, \lambda_3): = \frac{1}{2}a( \mathbf{w}, \mathbf{w}) - a_T(\mathbf{v}, \mathbf{w}) + \left(\lambda_1, \div \mathbf{w} \right)_{L^2(N^m(T))} + \left(\boldsymbol{\lambda_2}, \operatorname{\pi}_H \mathbf{w} \right)_{L^2(N^m(T))}  + \left(\lambda_3, \lambda_1\right)_{L^2(N^m(T))}
\end{equation}
with $\mathbf{w} \in \Hzerodivomega{N^m(T)}$, $\lambda_1 \in L^2(N^m(T))$, $\boldsymbol{\lambda_2} \in V_H^k(N^m(T))$ and $\lambda_3 \in Q_H^k(N^m(T))$, where $\lambda_1$ enforces $\div \mathbf{w} = 0$, $\boldsymbol{\lambda_2}$ enforces $\operatorname{\pi}_H \mathbf{w} = 0$, and $\lambda_3$ ensures that $\lambda_1$ is orthogonal to $Q_H^k(N^m(T))$ to avoid a nontrivial kernel for the saddle point problem.  The optimality conditions for the saddle point of $\mathcal{L}(\mathbf{w}, \lambda_1, \boldsymbol{\lambda_2}, \lambda_3)$, i.e., $(\operatorname{\mathcal{C}}_T^m \mathbf{v}, \lambda_1^*, \boldsymbol{\lambda_2}^*, \lambda_3^*)$, are then given by
\begin{align*}
a(\operatorname{\mathcal{C}}_T^m \mathbf{v}, \mathbf{w})+(\lambda_1^*, \div \mathbf{w})_{L^2(N^m(T))} +(\boldsymbol{\lambda_2}^*, \operatorname{\pi}_H \mathbf{w})_{L^2(N^m(T))}& = a_T(\mathbf{v}, \mathbf{w})  \hspace{16pt} \mathrm{for}\; \mathrm{all}\;\; \mathbf{w} \in \Hzerodivomega{N^m(T)}, \\
(\div \operatorname{\mathcal{C}}_T^m \mathbf{v} + \lambda_3^*, \mu_1)_{L^2(N^m(T))} & = 0 \hspace{50pt} \mathrm{for}\; \mathrm{all}\;\; \mu_1\in L^2(N^m(T)), \\
(\operatorname{\pi}_H \operatorname{\mathcal{C}}_T^m \mathbf{v}, \boldsymbol{\mu}_2)_{L^2(N^m(T))} & = 0 \hspace{50pt} \mathrm{for}\; \mathrm{all}\;\; \boldsymbol{\mu}_2\in V_H^k(N^m(T)), \\
(\lambda_1^*, \mu_3)_{L^2(N^m(T))} & = 0 \hspace{50pt} \mathrm{for}\; \mathrm{all}\;\; \mu_3\in Q_H^k(N^m(T)).
\end{align*}
In practice, the spaces $\Hzerodivomega{N^m(T)}$ and $L^2(N^m(T))$ are discretized on a fine mesh (mesh size $h\ll H$) which resolves all fine-scale features, denoted as $V_h^k(N^m(T))$ and $Q_h^k(N^m(T))$, respectively.
\end{remark}

With the localized correctors, we can now define the localized multiscale space and the corresponding multiscale approximation.
\begin{definition}[Localized multiscale approximation]\label{def-loc-mul-ap}
Let $\operatorname{\mathcal{C}}^m: \Hzerodiv \rightarrow \Wdivzero$ denote the correction operator from Definition \ref{definition:localized-corrector}. We define the localized multiscale space by
$$
\VHkmsloc :=\left(\id - \hspace{1pt}\mathcal{C}^m\right) V_H^k.
$$
With this, the localized multiscale approximation (LOD approximation) is the solution $\left(\mathbf{u}_{H,m}^{\mathrm{ms}},p_{H,m}\right) \in \VHkmsloc \times \left(Q_H^k \cap L^2_0(\Omega) \right)$ to the problem
\begin{equation}\label{loca-mult-prob}
\begin{aligned}
a\left(\mathbf{u}_{H,m}^{\mathrm{ms}}, \mathbf{v}^{\mathrm{ms}}_H \right)+b\left( \mathbf{v}^{\mathrm{ms}}_H , p_{H,m}\right) &=0 \hspace{69pt} \mathrm{for}\; \mathrm{all}\;\; \mathbf{v}_H^{\mathrm{ms}} \in \VHkmsloc,\\ 
b\left(\mathbf{u}_{H,m}^{\mathrm{ms}}, q_H\right) &= -\left(f, q_H\right)_{\Ltwo} \quad\, \mathrm{for}\; \mathrm{all}\;\; q_H \in Q_H^k.
\end{aligned} 
\end{equation}
\end{definition}

Note that the second equation in \eqref{loca-mult-prob} still implies that $\div \mathbf{u}_{H,m}^{\mathrm{ms}} = -\operatorname{P}_{\hspace{-1pt}H} \hspace{-1pt}f$ as for the ideal method, because it holds \,\,$\div \mathbf{u}_{H,m}^{\mathrm{ms}} \in \div \VHkms \subset Q_H^k$ and
\begin{align*}
(\div \mathbf{u}_{H,m}^{\mathrm{ms}} + \operatorname{P}_{\hspace{-1pt}H} \hspace{-1pt}f, q_H )_{L^2(\Omega)} =0 \quad\;\;\, \mbox{for all } q_H \in Q_H^k.
\end{align*}

The discrete multiscale problem \eqref{loca-mult-prob} is well-posed, as shown by the following lemma that can be proved analogously to \cite[Lemma 12]{hellman2016multiscale} with the improvements suggested in \cite[Section 4.3]{hellman2016multiscale}.

\begin{lemma}[Existence and uniqueness of LOD approximations]
\label{lemma:inf-sup-VHmsloc}
The localized multiscale method \eqref{loca-mult-prob} has a unique solution. In particular, there exists an $H$-independent constant $\rho_0>0$ (which only depends on $\alpha$, $\beta$, $\rho$, and $m$) such that
$$
\inf_{q_H \in Q_H^k\cap L^2_0(\Omega)\backslash\{0\}}  \sup _{\mathbf{v}^{\mathrm{ms}}_H \in \VHkmsloc\backslash\{\mathbf{0}\}} \frac{b( \mathbf{v}^{\mathrm{ms}}_H , q_H)}{\|q_H \|_{\Ltwo}\|\mathbf{v}^{\mathrm{ms}}_H \|_{\Hdiv}} \,\,\ge \,\, \rho_0.
$$
\end{lemma}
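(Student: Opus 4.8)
The plan is to reduce the inf-sup stability of $b(\cdot,\cdot)$ on the localized pair $\VHkmsloc \times Q_H^k$ to the known inf-sup stability on the ideal pair $\VHkms \times Q_H^k$ from Lemma \ref{lemma:inf-sup-multiscale}, using the fact that the localized corrector $\mathcal{C}^m$ is a bounded perturbation of the ideal corrector $\mathcal{C}$. First I would recall that both $\id - \mathcal{C}$ and $\id - \mathcal{C}^m$ are right-inverted by $\pi_H$, i.e., $\pi_H (\id - \mathcal{C}^m) \mathbf{v}_H = \mathbf{v}_H$ for all $\mathbf{v}_H \in V_H^k$, so $\id - \mathcal{C}^m : V_H^k \to \VHkmsloc$ is a bijection and $\dim \VHkmsloc = \dim V_H^k$; moreover, since $\mathcal{C}$ and $\mathcal{C}^m$ both have divergence-free range, $\div (\id - \mathcal{C}^m) \mathbf{v}_H = \div \mathbf{v}_H$, so $b$ acts the same way on a corrected function as on its coarse part. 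Hence, given $q_H \in Q_H^k \cap L^2_0(\Omega) \setminus \{0\}$, I would pick a coarse test function $\mathbf{v}_H \in V_H^k$ realizing the classical inf-sup bound \eqref{inf-sup-classical-spaces} (up to a factor), and use $(\id - \mathcal{C}^m)\mathbf{v}_H \in \VHkmsloc$ as the test function for the localized problem; then $b((\id - \mathcal{C}^m)\mathbf{v}_H, q_H) = b(\mathbf{v}_H, q_H) \geq \rho \|q_H\|_{\Ltwo} \|\mathbf{v}_H\|_{\Hdiv}$, and it remains only to control $\|(\id - \mathcal{C}^m)\mathbf{v}_H\|_{\Hdiv}$ from above in terms of $\|\mathbf{v}_H\|_{\Hdiv}$.

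The core estimate is therefore a uniform (in $H$, though allowed to depend on $m$) bound $\|(\id - \mathcal{C}^m)\mathbf{v}_H\|_{\Hdiv} \lesssim \|\mathbf{v}_H\|_{\Hdiv}$. For the divergence part this is immediate since $\div(\id - \mathcal{C}^m)\mathbf{v}_H = \div \mathbf{v}_H$. For the $L^2$ part, it suffices to bound $\|\mathcal{C}^m \mathbf{v}_H\|_{\Ltwo}$. Testing \eqref{Cmt prob} with $\mathbf{w} = \mathcal{C}_T^m \mathbf{v}_H$ and using coercivity and boundedness of $a$ gives $\alpha \|\mathcal{C}_T^m \mathbf{v}_H\|_{L^2(\Omega)}^2 \lesssim a(\mathcal{C}_T^m \mathbf{v}_H, \mathcal{C}_T^m \mathbf{v}_H) = a_T(\mathbf{v}_H, \mathcal{C}_T^m \mathbf{v}_H) = (\mathbf{A}^{-1}\mathbf{v}_H, \mathcal{C}_T^m \mathbf{v}_H)_{L^2(T)} \leq \beta \|\mathbf{v}_H\|_{L^2(T)} \|\mathcal{C}_T^m \mathbf{v}_H\|_{L^2(\Omega)}$, hence $\|\mathcal{C}_T^m \mathbf{v}_H\|_{L^2(\Omega)} \lesssim \|\mathbf{v}_H\|_{L^2(T)}$ with a constant depending only on $\alpha,\beta$. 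Summing over $T$ and invoking the finite-overlap property of the patches $\{N^m(T)\}_{T \in \mathcal{T}_H}$ — each point of $\Omega$ lies in at most $C_{\mathrm{ol}}(d,\cshape,m)$ of them, so that $\sum_T \|\mathcal{C}_T^m \mathbf{v}_H\|_{L^2(\Omega)}^2 \lesssim \sum_T \|\mathbf{v}_H\|_{L^2(T)}^2 = \|\mathbf{v}_H\|_{L^2(\Omega)}^2$ while $\|\sum_T \mathcal{C}_T^m \mathbf{v}_H\|_{L^2(\Omega)}^2 \lesssim C_{\mathrm{ol}} \sum_T \|\mathcal{C}_T^m \mathbf{v}_H\|_{L^2(\Omega)}^2$ — yields $\|\mathcal{C}^m \mathbf{v}_H\|_{L^2(\Omega)} \lesssim \|\mathbf{v}_H\|_{L^2(\Omega)}$, with a hidden constant depending on $\alpha,\beta,d,\cshape,m$ but not on $H$. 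Combining, $\|(\id - \mathcal{C}^m)\mathbf{v}_H\|_{\Hdiv} \leq C_1(\alpha,\beta,d,\cshape,m)\,\|\mathbf{v}_H\|_{\Hdiv}$.

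Putting the pieces together, for any $q_H \in Q_H^k \cap L^2_0(\Omega)\setminus\{0\}$ we obtain
\[
\sup_{\mathbf{v}^{\mathrm{ms}}_H \in \VHkmsloc\setminus\{\mathbf{0}\}} \frac{b(\mathbf{v}^{\mathrm{ms}}_H, q_H)}{\|q_H\|_{\Ltwo}\|\mathbf{v}^{\mathrm{ms}}_H\|_{\Hdiv}} \;\geq\; \frac{b((\id-\mathcal{C}^m)\mathbf{v}_H, q_H)}{\|q_H\|_{\Ltwo}\|(\id-\mathcal{C}^m)\mathbf{v}_H\|_{\Hdiv}} \;\geq\; \frac{\rho}{C_1}\,=:\,\rho_0 \;>\;0,
\]
where $\mathbf{v}_H \in V_H^k \setminus \{\mathbf{0}\}$ is chosen so that $b(\mathbf{v}_H,q_H) \geq \rho \|q_H\|_{\Ltwo}\|\mathbf{v}_H\|_{\Hdiv}$, as furnished by \eqref{inf-sup-classical-spaces}. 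Well-posedness of \eqref{loca-mult-prob} then follows from Lemma \ref{lem-inf-sup} applied with $(\mathcal{V},\mathcal{Q}) = (\VHkmsloc, Q_H^k \cap L^2_0(\Omega))$: (A1) is coercivity of $a$ on $\VHkmsloc$ restricted to $\div$-free functions, which holds with constant $\alpha$ after noting that on $\{\mathbf{v} \in \VHkmsloc : b(\mathbf{v},q_H) = 0 \ \forall q_H\}$ one has $\div \mathbf{v} = 0$ so the $\Hdiv$-norm and $L^2$-norm coincide; (A2) and (A3) are the boundedness of $a$ and $b$ with constants $\beta$ and $1$; and (A4) is the inf-sup bound just established. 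I expect the only mildly technical point to be the finite-overlap bookkeeping for $\{N^m(T)\}$ and tracking that the resulting constant, while $m$-dependent, is genuinely $H$-independent — but this is standard in the LOD literature and presents no real obstacle; the substance of the argument is the one-line perturbation estimate for $\mathcal{C}^m$ combined with the coarse-space inf-sup.
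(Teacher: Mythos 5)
Your proof is correct and takes essentially the route the paper points to (it cites the analogous argument in \cite[Lemma 12, Section 4.3]{hellman2016multiscale}): pass the classical inf-sup test function $\mathbf{v}_H \in V_H^k$ through $(\id-\mathcal{C}^m)$, exploit that $\div$ and $b(\cdot,q_H)$ are unchanged, and control $\|(\id-\mathcal{C}^m)\mathbf{v}_H\|_{\Hdiv}$ by testing each local corrector problem \eqref{Cmt prob} with $\mathcal{C}_T^m\mathbf{v}_H$ and summing with a finite-overlap argument. The local bound $\|\mathcal{C}_T^m\mathbf{v}_H\|_{L^2(\Omega)}\leq(\beta/\alpha)\|\mathbf{v}_H\|_{L^2(T)}$, the disjoint-element sum $\sum_T\|\mathbf{v}_H\|_{L^2(T)}^2=\|\mathbf{v}_H\|_{L^2(\Omega)}^2$, and the overlap count $C_{\mathrm{ol}}\sim m^d$ are all correct and yield an $H$-independent constant, as required.

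One refinement worth flagging: your overlap argument gives $\rho_0\gtrsim(1+m^{d/2}\beta/\alpha)^{-1}$, which \emph{degrades} as $m\to\infty$. The lemma statement allows this (it only claims $m$-dependence), but the downstream error bounds in Theorem \ref{main-thm} suppress the inf-sup constant into an $m$-independent $\lesssim$, so a constant that decays in $m$ would silently pollute \eqref{sub-estimate-1-loc-method}. The cleaner route — and presumably what the \quotes{improvements suggested in Section 4.3} refer to — is to split $\mathcal{C}^m\mathbf{v}_H=\mathcal{C}\mathbf{v}_H-(\mathcal{C}-\mathcal{C}^m)\mathbf{v}_H$, bound the first term via the \emph{global} corrector identity \eqref{ideal-global-correction-operator} by $\|\mathcal{C}\mathbf{v}_H\|_{L^2}\leq(\beta/\alpha)\|\mathbf{v}_H\|_{L^2}$, and bound the second by Lemma \ref{lem-expo-deca-loca-erro}, whose factor $(m^{d/2}+1)\theta^m$ is uniformly bounded over all $m$. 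This gives $\|(\id-\mathcal{C}^m)\mathbf{v}_H\|_{\Hdiv}\lesssim\|\mathbf{v}_H\|_{\Hdiv}$ with a constant depending on $\alpha,\beta$ (and the suppressed mesh data) but \emph{not} on $m$, hence a genuinely $m$-uniform $\rho_0$. Your proof, combined with this observation for large $m$, settles the lemma in its sharpest useful form.
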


The main result of our contribution is the following a priori error estimate for the localized multiscale method.

\begin{theorem}[Error estimate for the localized multiscale approximation]\label{main-thm}
Let Assumptions \ref{assu-mesh} and \ref{assu-stab-inte} hold and let $\Omega$ be contractible. Denote by $(\mathbf{u},p)\in \Hzerodiv \times L^2_0(\Omega)$ the exact solution to the multiscale problem \eqref{eqn-mixed-form-var}. For a coarse mesh $\mathcal{T}_H$, a polynomial degree $k \in \mathbb{N}_0$, and a given number of layers $m \in \mathbb{N}$, we let $\VHkmsloc$ denote the corresponding LOD multiscale space given by Definition \ref{def-loc-mul-ap}, and $(\mathbf{u}_{H,m}^{\mathrm{ms}}, p_{H,m}) \in \VHkmsloc \times (Q_H^k \cap L^2_0(\Omega))$ the corresponding LOD approximation given by \eqref{loca-mult-prob}. Then, it holds
\begin{eqnarray}
\label{sub-estimate-2-loc-method}
\|\div \,(\mathbf{u}-\mathbf{u}_{H,m}^{\mathrm{ms}} )\hspace{1pt}\|_{\Ltwo}  &=&
\left\| f-\operatorname{P}_H \hspace{-2pt}f \right\|_{\Ltwo},
\end{eqnarray}
and there exists a decay rate $\theta\in(0,1)$ that depends on the contrast $\beta / \alpha$, but not on $m$ or $H$, such that
\begin{eqnarray}
\label{main-estimate-loc-method}
\vertiii{\mathbf{u} - \mathbf{u}_{H,m}^{\mathrm{ms}} } &\lesssim&  H\left\|f- \operatorname{P}_H \hspace{-2pt} f\right\|_{\Ltwo} \, + \, m^{d / 2} \, \theta^{m} \, \|f\|_{\Ltwo},
\end{eqnarray}
and
\begin{eqnarray}
\label{sub-estimate-1-loc-method}
    \|p_{H,m}-p\|_{L^2(\Omega)} &\lesssim& H\left\|f- \operatorname{P}_H \hspace{-2pt} f\right\|_{\Ltwo} \,+\, \|p - \operatorname{P}_H \hspace{-2pt}p \|_{L^2(\Omega)} \, + \, m^{d / 2} \, \theta^{m} \, \|f\|_{\Ltwo}.
\end{eqnarray}
In particular, if $f \in H^{k+1}(T)$ for all $T \in \mathcal{T}_H$ and if the
number of layers $m$ is sufficiently large so that $m>\tfrac{|\log(H)|}{|\log(\theta)|}(k+2)$, then we obtain the optimal-order error bound 
\begin{equation*}
\| \mathbf{u} - \mathbf{u}_{H,m}^{\mathrm{ms}} \|_{L^2(\Omega)} \,+\, H\, \| \mathbf{u} - \mathbf{u}_{H,m}^{\mathrm{ms}} \|_{\Hdiv}  \,\,\,\lesssim\,\,\,  H^{k+2} \,\, m^{d / 2} \, \left( \sum_{T \in \mathcal{T}_H} \|f \|_{H^{k+1}(T)}^2 \right)^{1/2}.
\end{equation*}
\end{theorem}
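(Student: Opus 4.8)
The plan is to derive \eqref{sub-estimate-2-loc-method} exactly as in the ideal case, and then to bound the flux error by splitting it into the ideal approximation error (controlled by Theorem \ref{thm-error-ideal-problem}) and a localization error (controlled by the exponential decay of the element correctors). The divergence identity is immediate: testing the second equation of \eqref{loca-mult-prob} with $q_H \in Q_H^k$ and using $\div \mathbf{u}_{H,m}^{\mathrm{ms}} \in \div \VHkmsloc \subset Q_H^k$ (recall $\div \circ\, \mathcal{C}^m = 0$ and $\div V_H^k \subset Q_H^k$) gives $\div \mathbf{u}_{H,m}^{\mathrm{ms}} = -\operatorname{P}_H f$, hence $\div(\mathbf{u} - \mathbf{u}_{H,m}^{\mathrm{ms}}) = \operatorname{P}_H f - f$. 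For the energy estimate, I would compare $\mathbf{u}_{H,m}^{\mathrm{ms}}$ with a localized version of the ideal object: let $\mathbf{u}(\operatorname{P}_H f) \in \Hzerodiv$ denote the exact solution of \eqref{eqn-mixed-form-var} with right-hand side $\operatorname{P}_H f$ and set $\mathbf{z}_H := \pi_H\bigl(\mathbf{u}(\operatorname{P}_H f)\bigr) \in V_H^k$; as established in the proof of Theorem \ref{thm-error-ideal-problem}, $\mathbf{u}(\operatorname{P}_H f) = (\id - \mathcal{C})\mathbf{z}_H$ and $\vertiii{\mathbf{u} - \mathbf{u}(\operatorname{P}_H f)} \lesssim H\|f - \operatorname{P}_H f\|_{\Ltwo}$. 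Define $\widetilde{\mathbf{u}} := (\id - \mathcal{C}^m)\mathbf{z}_H \in \VHkmsloc$. Since the commuting property gives $\div \mathbf{z}_H = \operatorname{P}_H(\div \mathbf{u}(\operatorname{P}_H f)) = -\operatorname{P}_H f$, we have $\div \widetilde{\mathbf{u}} = -\operatorname{P}_H f = \div \mathbf{u}_{H,m}^{\mathrm{ms}}$, so $\mathbf{u}_{H,m}^{\mathrm{ms}} - \widetilde{\mathbf{u}}$ is divergence-free; subtracting \eqref{loca-mult-prob} from \eqref{eqn-mixed-form-var} yields $a(\mathbf{u} - \mathbf{u}_{H,m}^{\mathrm{ms}}, \mathbf{v}) + b(\mathbf{v}, p - p_{H,m}) = 0$ for all $\mathbf{v} \in \VHkmsloc$, and testing with $\mathbf{v} = \mathbf{u}_{H,m}^{\mathrm{ms}} - \widetilde{\mathbf{u}}$ eliminates the $b$-term. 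A standard Céa argument then gives $\vertiii{\mathbf{u} - \mathbf{u}_{H,m}^{\mathrm{ms}}} \le \vertiii{\mathbf{u} - \widetilde{\mathbf{u}}} \le \vertiii{\mathbf{u} - \mathbf{u}(\operatorname{P}_H f)} + \vertiii{(\mathcal{C} - \mathcal{C}^m)\mathbf{z}_H}$.

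It then remains to estimate $\vertiii{(\mathcal{C} - \mathcal{C}^m)\mathbf{z}_H}$. Because $\mathcal{C}_T \mathbf{z}_H$ depends on $\mathbf{z}_H$ only through $\mathbf{z}_H|_T$ (via $a_T(\cdot,\cdot)$), the exponential decay estimate for the localized correctors — to be established in Section \ref{Sec: exp decay 3d} for $d = 3$ and in Appendix \ref{appendix:section:decay-2D} for $d = 2$ — provides a decay rate $\theta \in (0,1)$ depending only on $\beta/\alpha$ with $\vertiii{(\mathcal{C}_T - \mathcal{C}_T^m)\mathbf{z}_H} \lesssim \theta^m \vertiii{\mathbf{z}_H}_T$ for each $T \in \mathcal{T}_H$. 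Summing over $T$ and using the finite overlap of the patches $\{N^{m+1}(T)\}$ (a fixed point lies in at most $O(m^d)$ of them) together with the exponential decay of the correctors' tails, the standard LOD summation argument upgrades this to $\vertiii{(\mathcal{C} - \mathcal{C}^m)\mathbf{z}_H} \lesssim m^{d/2}\theta^m \vertiii{\mathbf{z}_H}$. Finally, the $\Hdiv$-stability of $\pi_H$ from Assumption \ref{assu-stab-inte} (summed over all elements) together with the $\mathbf{A}$-independent stability bound \eqref{u stabbd} give $\vertiii{\mathbf{z}_H} \le \sqrt{\beta}\,\|\mathbf{z}_H\|_{\Ltwo} \lesssim \|\mathbf{u}(\operatorname{P}_H f)\|_{\Hdiv} \lesssim \|\operatorname{P}_H f\|_{\Ltwo} \le \|f\|_{\Ltwo}$, which completes the proof of \eqref{main-estimate-loc-method}.

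For the pressure bound \eqref{sub-estimate-1-loc-method}, I would split $\|p - p_{H,m}\|_{\Ltwo} \le \|p - \operatorname{P}_H p\|_{\Ltwo} + \|\operatorname{P}_H p - p_{H,m}\|_{\Ltwo}$ and control the second summand via the inf-sup stability of $b(\cdot,\cdot)$ on $\VHkmsloc \times Q_H^k$ from Lemma \ref{lemma:inf-sup-VHmsloc}: for $\mathbf{v} \in \VHkmsloc$ one has $\div \mathbf{v} \in Q_H^k$, so $b(\mathbf{v}, p - \operatorname{P}_H p) = 0$, whence the Galerkin orthogonality yields $b(\mathbf{v}, \operatorname{P}_H p - p_{H,m}) = b(\mathbf{v}, p - p_{H,m}) = -a(\mathbf{u} - \mathbf{u}_{H,m}^{\mathrm{ms}}, \mathbf{v})$ and therefore $\|\operatorname{P}_H p - p_{H,m}\|_{\Ltwo} \lesssim \vertiii{\mathbf{u} - \mathbf{u}_{H,m}^{\mathrm{ms}}}$; inserting \eqref{main-estimate-loc-method} gives \eqref{sub-estimate-1-loc-method}. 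The concluding optimal-order estimate then follows by inserting the polynomial approximation bound $\|f - \operatorname{P}_H f\|_{\Ltwo} \lesssim H^{k+1}\bigl(\sum_{T \in \mathcal{T}_H}\|f\|_{H^{k+1}(T)}^2\bigr)^{1/2}$ (valid since $f|_T \in H^{k+1}(T)$), noting that $m > \tfrac{|\log H|}{|\log\theta|}(k+2)$ is equivalent to $\theta^m < H^{k+2}$ so that $m^{d/2}\theta^m\|f\|_{\Ltwo} \lesssim m^{d/2} H^{k+2}\bigl(\sum_{T}\|f\|_{H^{k+1}(T)}^2\bigr)^{1/2}$; the $L^2$-bound for $\mathbf{u} - \mathbf{u}_{H,m}^{\mathrm{ms}}$ then follows from $\alpha\|\cdot\|_{\Ltwo}^2 \le \vertiii{\cdot}^2$, and the $\Hdiv$-contribution from adding \eqref{sub-estimate-2-loc-method} and using $H\|f - \operatorname{P}_H f\|_{\Ltwo} \lesssim H^{k+2}(\cdots)^{1/2}$.

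The principal obstacle is the exponential decay estimate for the localized element correctors $\mathcal{C}_T^m$, which supplies the factor $\theta^m$ and is not proved in this section but deferred to Section \ref{Sec: exp decay 3d} (for $d = 3$) and Appendix \ref{appendix:section:decay-2D} (for $d = 2$). Its proof requires a cut-off function argument coupled with local regular decompositions of $\Hzerodivomega{\omega}$ on simply-connected patches $\omega = N^{j}(T)$ — which is precisely why $\Omega$, and hence each such patch, must be assumed contractible — and it is here that the genuine difficulty and the $d = 2$ versus $d = 3$ dichotomy reside. Within the proof of the theorem itself, the only points requiring care are verifying that $\widetilde{\mathbf{u}} = (\id - \mathcal{C}^m)\mathbf{z}_H$ lies in $\VHkmsloc$ with $\div \widetilde{\mathbf{u}} = -\operatorname{P}_H f$ (so that the flux error is $a$-orthogonal to the divergence-free difference $\mathbf{u}_{H,m}^{\mathrm{ms}} - \widetilde{\mathbf{u}}$), and the finite-overlap summation converting element-wise decay into the global factor $m^{d/2}\theta^m$.
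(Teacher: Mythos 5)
Your proof is correct and follows essentially the same route as the paper's: introduce the comparison function $\widetilde{\mathbf{u}} = (\id - \mathcal{C}^m)\,\pi_H\bigl(\mathbf{u}(\operatorname{P}_H f)\bigr) \in \VHkmsloc$ with matching divergence $-\operatorname{P}_H f$, use Galerkin/C\'ea on the divergence-free difference to reduce to $\vertiii{\mathbf{u} - \widetilde{\mathbf{u}}}$, split into the ideal error from Theorem~\ref{thm-error-ideal-problem} plus the localization error $(\mathcal{C} - \mathcal{C}^m)$ controlled by Lemma~\ref{lem-expo-deca-loca-erro}, and finish the pressure bound via the inf-sup stability of Lemma~\ref{lemma:inf-sup-VHmsloc}. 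The only cosmetic differences are that the paper writes $\pi_H(\mathbf{u}_H^{\mathrm{ms}})$ where you write $\pi_H(\mathbf{u}(\operatorname{P}_H f))$ (these coincide, as shown in the proof of Theorem~\ref{thm-error-ideal-problem}), and your observation that $b(\mathbf{v}, p - \operatorname{P}_H p) = 0$ for $\mathbf{v}\in\VHkmsloc$ is a slight tidying of the paper's pressure argument that does not change the final estimate.
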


The proof of the above theorem is postponed to Section \ref{sec-proof-main-thm}.\\[-0.8em]

The $a$-orthogonality between $\VHkms$ and $\Wdivzero$, which holds in the ideal case (cf. equation \eqref{a-orthogonality}), is no longer valid in the localized setting, i.e., we do not have $a$-orthogonality between $\VHkmsloc$ and $\Wdivzero$. Since the orthogonality was heavily exploited in the error analysis of the ideal method (cf. Theorem \ref{thm-error-ideal-problem}), the proof of Theorem \ref{main-thm} requires a different strategy based on decay estimates for functions of the form $\mathcal{C}^m_T \mathbf{v}$. The corresponding estimates are established in the next subsection.

\subsection{Exponential decay of element correctors for $d=3$}\label{Sec: exp decay 3d}

In this subsection, we establish the major step in the proof of Theorem \ref{main-thm}, that is, we show the exponential decay of correctors with decay rates measured in units of the coarse mesh size $H$. The results in this section rely fundamentally on the stability properties of the quasi-interpolation $\pi_H$ as specified in Assumption \ref{assu-stab-inte}.

The proof of the decay depends on the spatial dimension $d$ and is different for $d=2$ and $d=3$. In the following, we restrict our analysis to the more challenging case $d=3$, whereas the proof for $d=2$ is given in Appendix \ref{appendix:section:decay-2D}.

Our analysis takes inspiration from \cite{gallistl2018numerical, henning2020computational} and requires $\Hcurl$-spaces. We define
$$
\Hcurl \,\, :=\,\, \{ \, \mathbf{v} \in L^2(\Omega,\mathbb{R}^3) \, : \, \curlbf \mathbf{v} \in L^2(\Omega,\mathbb{R}^3) \, \}
$$
and the subspace of $\Hcurl$-functions with a vanishing tangential trace by
$$
\Hzerocurl \,\, := \,\, \{ \, \mathbf{v} \in \Hcurl \, : \, \mathbf{v} \times \mathbf{n} \vert_{\partial \Omega} =\mathbf{0}\,\}.
$$
Furthermore, we require the space of vector-valued $H^1$-functions with homogeneous Dirichlet boundary values. For a Lipschitz subdomain $\omega\subset \Omega$, we define
$$
\mathbf{H}^1_0(\omega)\,\, := \,\, \{ \, \mathbf{v} \in H^1(\omega,\mathbb{R}^3) \, : \, \mathbf{v}\vert_{\partial \omega} = \mathbf{0} \, \}.
$$
The Jacobian of a function $\mathbf{v} \in \mathbf{H}^1_0(\omega)$ will be denoted by $\nabla \mathbf{v}$.

With these spaces at hand, we can formulate an important auxiliary result that establishes a regular decomposition of the interpolation error $\mathbf{v}-\pi_H(\mathbf{v})$ together with corresponding local stability estimates.

\begin{lemma}[Decomposition of $(\mathrm{id}-\pi_H)$]\label{lem-stable-decomposition}
Let $d=3$, and let $\pi_H : \Hzerodiv \rightarrow \mathcal{RT}_{\hspace{-1pt}k}(\mathcal{T}_{H}) \cap \,\Hzerodiv$ be a stable quasi-interpolation operator satisfying Assumptions \ref{assu-stab-inte}. If $\Omega$ is contractible, then for any $\mathbf{v} \in \Hzerodiv$, there exist $\mathbf{r} \in \mathbf{H}_0^1(\Omega)$ and $\mathbf{q} \in \Hzerocurl$ such that
\begin{align}\label{v-piHv deco}
\mathbf{v}-\pi_H(\mathbf{v})=\mathbf{r}+\curlbf \mathbf{q},
\end{align}
and, for every $T \in \mathcal{T}_H$, it holds
\begin{align}\label{v-piHv bds}
\begin{split}
&\|\mathbf{r}\|_{L^2\left(T\right)} + H\|\nabla \mathbf{r}\|_{L^2\left(T\right)}
 + H^{-1}\|\mathbf{q}\|_{L^2\left(T\right)} + \|\curlbf \mathbf{q}\|_{L^2\left(T\right)}  \\[0.2em]
&\,\,\,\,\,\lesssim\,\,\, \|\mathbf{v}\|_{L^2\left(N^3(T)\right)}+H\|\operatorname{div} \mathbf{v}\|_{L^2\left(N^3(T)\right)},\hspace{60pt}
\end{split}
\end{align}
where the generic hidden constants  only depend on the shape of the coarse mesh.

If $\operatorname{div} \mathbf{v}=0$, then we have \eqref{v-piHv deco} and \eqref{v-piHv bds} with $\mathbf{r}=\mathbf{0}$, i.e.,
$$
\mathbf{v}-\pi_H(\mathbf{v})=\curlbf \mathbf{q}
\qquad\mathrm{with}
\qquad
\|\mathbf{q}\|_{L^2\left(T\right)} + H \|\curlbf\mathbf{q}\|_{L^2\left(T\right)} 
\,\lesssim\, H \, \|\mathbf{v}\|_{L^2\left(N^3(T)\right)} .
$$
\end{lemma}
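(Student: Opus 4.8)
The plan is to start from a \emph{global} regular decomposition of $\mathbf{v} - \pi_H(\mathbf{v}) \in \Hzerodiv$ and then \emph{localize} it using a partition of unity, controlling the commutator terms via the stability estimates for $\pi_H$. Concretely, since $\Omega$ is contractible and $\mathbf{v} - \pi_H(\mathbf{v}) \in \Hzerodivomega{\Omega}$, a classical regular decomposition result (of Birman--Solomyak / Costabel--McIntosh type, adapted to vanishing normal trace on a contractible Lipschitz domain) yields $\widetilde{\mathbf{r}} \in \mathbf{H}^1_0(\Omega)$ and $\widetilde{\mathbf{q}} \in \Hzerocurl$ with
\begin{align*}
\mathbf{v} - \pi_H(\mathbf{v}) = \widetilde{\mathbf{r}} + \curlbf \widetilde{\mathbf{q}},
\qquad
\|\widetilde{\mathbf{r}}\|_{H^1(\Omega)} + \|\widetilde{\mathbf{q}}\|_{\Hcurl} \lesssim \|\mathbf{v} - \pi_H(\mathbf{v})\|_{\Hdiv},
\end{align*}
and moreover $\div \widetilde{\mathbf{r}} = \div(\mathbf{v}-\pi_H(\mathbf{v}))$, so that if $\div\mathbf{v}=0$ (hence $\div\pi_H(\mathbf{v})=\mathrm{P}_H(\div\mathbf{v})=0$ by the commuting property) one may even take $\widetilde{\mathbf{r}}=\mathbf{0}$. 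The global estimate is not yet local: the right-hand side involves the full norm over $\Omega$ rather than over $N^3(T)$. This is the standard gap that the LOD analysis must close, and it is where the interpolation stability enters.

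The next step is the localization. One introduces the $\mathbb{P}_1$ Lagrange partition of unity $\{\psi_z\}_{z\in\mathcal{N}_H}$ (the same bump functions used in Definition \ref{def-quas-inte}), groups them into a macro partition of unity subordinate to element patches, and writes $\mathbf{r}$ and $\mathbf{q}$ as suitable modifications of $\psi\widetilde{\mathbf{r}}$ and $\psi\widetilde{\mathbf{q}}$. The subtlety is that multiplying $\curlbf\widetilde{\mathbf{q}}$ by a cut-off does not commute with $\curlbf$: one gets $\curlbf(\psi\widetilde{\mathbf{q}}) = \psi\,\curlbf\widetilde{\mathbf{q}} + \nabla\psi\times\widetilde{\mathbf{q}}$, and the extra gradient term has to be absorbed into $\mathbf{r}$ (it is an $\mathbf{H}^1_0$ perturbation but \emph{not} divergence-free, so when $\div\mathbf{v}=0$ this term must be handled differently — typically by further splitting $\nabla\psi\times\widetilde{\mathbf{q}}$ using a local Bogovskii/regular-decomposition operator on each patch, which is why contractibility/simple-connectedness of the patches is assumed). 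After reassembling, one obtains a genuinely local decomposition where $\mathbf{r}|_T$ and $\mathbf{q}|_T$ depend only on $\widetilde{\mathbf{r}}, \widetilde{\mathbf{q}}$ restricted to a bounded neighborhood of $T$.

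The final step is to replace the non-local bound $\|\mathbf{v}-\pi_H(\mathbf{v})\|_{\Hdiv(\Omega)}$ by the local quantity $\|\mathbf{v}\|_{L^2(N^3(T))} + H\|\div\mathbf{v}\|_{L^2(N^3(T))}$. This is precisely where Assumption \ref{assu-stab-inte}(ii) and Remark \ref{remark-error-estimates-piH} are used: on each patch, $\mathbf{v}-\pi_H(\mathbf{v})$ can be localized because $\pi_H$ reproduces Raviart--Thomas functions locally, so $\|\mathbf{v}-\pi_H(\mathbf{v})\|_{L^2(\omega)}$ is controlled by $\|\mathbf{v}-\mathbf{v}_K\|_{L^2}$ over a slightly enlarged patch for the best local RT-approximation $\mathbf{v}_K$ — and the latter is bounded by $\|\mathbf{v}\|_{L^2}$ on that patch (plus the $H\|\div\mathbf{v}\|$ term from the divergence part). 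Tracking the patch enlargements through the global-to-local passage, the multiplication by $\nabla\psi$ (costing one factor of $H^{-1}$, matching the weights in \eqref{v-piHv bds}), and the local Bogovskii step, one arrives at $N^3(T)$ as the final locality. I expect the \textbf{main obstacle} to be exactly this bookkeeping: making the global regular decomposition \emph{local with a quantitative, $H$-robust, interpolation-error-type bound}, in particular correctly routing the commutator term $\nabla\psi\times\widetilde{\mathbf{q}}$ so that the divergence-free case ($\mathbf{r}=\mathbf{0}$) still goes through cleanly. The $H$-weights in \eqref{v-piHv bds} ($H$ on $\nabla\mathbf{r}$, $H^{-1}$ on $\mathbf{q}$, none on $\curlbf\mathbf{q}$) are dictated by scaling and must be matched at each multiplication by a cut-off; this is routine once the structure is set up but is the part most prone to error.
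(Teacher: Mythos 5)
Your proposal inverts the order of operations relative to the paper, and the inversion creates a genuine gap. You propose to first perform a \emph{global} regular decomposition $\mathbf{v}-\pi_H(\mathbf{v})=\widetilde{\mathbf{r}}+\curlbf\widetilde{\mathbf{q}}$ on $\Omega$, and only then localize $\widetilde{\mathbf{r}},\widetilde{\mathbf{q}}$ by a partition of unity $\{\psi_z\}$. The problem is that the global regular decomposition is inherently nonlocal: the restriction $\widetilde{\mathbf{r}}\vert_T$ (typically obtained from a global elliptic solve) depends on the data $\mathbf{v}-\pi_H(\mathbf{v})$ on all of $\Omega$, not on $N^3(T)$. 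Multiplying by $\psi_z$ does not change this — $\psi_z\widetilde{\mathbf{r}}$ is supported on $\omega_z$, but its $H^1(\omega_z)$-norm is still controlled only by $\|\mathbf{v}-\pi_H\mathbf{v}\|_{\Hdiv(\Omega)}$, not by anything local. The final step of your plan, where you invoke Assumption~\ref{assu-stab-inte}(ii) and Remark~\ref{remark-error-estimates-piH} to trade the global norm for a local one, is where the argument breaks: these give local control on the \emph{interpolation error} $\mathbf{v}-\pi_H\mathbf{v}$, but \emph{not} on the pieces $\widetilde{\mathbf{r}},\widetilde{\mathbf{q}}$ of a global regular decomposition. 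There is no mechanism in your proposal that makes $\widetilde{\mathbf{r}}\vert_T$ small when $\mathbf{v}$ is small near $T$.

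The paper avoids this by reversing the order: it first obtains a \emph{local} decomposition of the interpolation error and only then regularly decomposes each local piece. Concretely, it invokes Sch\"oberl's result \cite[Theorem 11]{schoberl2008posteriori}, which writes $\mathbf{v}-I_H^S(\mathbf{v})=\sum_{z\in\mathcal{N}_H}\mathbf{v}_z$ with $\mathbf{v}_z\in\Hzerodivomega{\omega_z}$ satisfying the \emph{local} stability bounds \eqref{deco-vp-stab}; this is the ingredient you are missing. Using $\pi_H\circ I_H^S=I_H^S$ (a projection argument, since $V_H^0\subset V_H^k$), the interpolation error becomes $\mathbf{v}-\pi_H(\mathbf{v})=\sum_z(\id-\pi_H)\mathbf{v}_z$ with each summand supported in $N^1(\omega_z)$ and locally controlled. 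Only then does the paper apply a regular decomposition — but a \emph{local} one, on each contractible patch $N^1(\omega_z)$, via Hiptmair--Xu \cite[Lemma 3.8, Corollary 3.9]{hiptmair2007nodal} and \cite[Theorem 3.6]{girault2012finite} — and sums the local $\mathbf{r}_z,\mathbf{q}_z$. The locality of the decomposition at the very first step is what makes every subsequent estimate patch-local without any global-to-local passage. Your intuition about contractible patches, the commutator $\nabla\psi\times\widetilde{\mathbf{q}}$, the $H$-weights, and the divergence-free case is sound, but it should be applied \emph{after} a Sch\"oberl-type local splitting, not after a global Costabel--McIntosh decomposition.
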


\begin{proof}
Let $\mathbf{v} \in \Hzerodiv$, and let $I_H^S: \Hzerodiv \rightarrow V_H^0$ denote the quasi-interpolation operator introduced by Sch\"{o}berl in \cite{schoberl2008posteriori}. Note that the Sch\"{o}berl interpolation for $\Hdiv$ spaces is only defined for $d=3$ and for the lowest order Raviart--Thomas space $V_H^0=\mathcal{RT}_{\hspace{-1pt}0}(\mathcal{T}_{H}) \cap \,\Hzerodiv$. It is shown in \cite[Theorem 11]{schoberl2008posteriori} that there exists a decomposition
\begin{equation*}
\mathbf{v}-I_H^S(\mathbf{v})=\sum_{ z \in \mathcal{N}_H} \mathbf{v}_z
\end{equation*}
for some local patch functions $\mathbf{v}_z \in \Hzerodivomega{\omega_z}$ (extended by zero to $\Omega\setminus\omega_z$) that satisfy the local stability estimates
\begin{equation}\label{deco-vp-stab}
\|\mathbf{v}_z\|_{L^2\left(\omega_z\right)} \lesssim\|\mathbf{v}\|_{L^2\left(N^1\left(\omega_z\right)\right)},\qquad \|\operatorname{div} \mathbf{v}_z\|_{L^2\left(\omega_z\right)} \lesssim\|\operatorname{div} \mathbf{v}\|_{L^2\left(N^1\left(\omega_z\right)\right)},
\end{equation}
where $N^1(\omega_z):=\bigcup\left\{T \in \mathcal{T}_H: T \cap \omega_z\neq \emptyset\right\}$ denotes the patch neighborhood of $\omega_z$. Since the quasi-interpolation operator $\operatorname{\pi}_H: \Hzerodiv \rightarrow V_H^k$ is a projection and $V_H^0\subset V_H^k$, we have that $\pi_H \circ I_H^S = I_H^S$, and hence,
\begin{equation*}
\mathbf{v}-\operatorname{\pi}_H(\mathbf{v})=\mathbf{v}-I_H^S(\mathbf{v})-\operatorname{\pi}_H\left(\mathbf{v}-I_H^S \mathbf{v}\right)=\sum_{z \in \mathcal{N}_H}\mathbf{\tilde{v}}_z,\qquad\text{where}\quad \mathbf{\tilde{v}}_z := \left(\id-\operatorname{\pi}_H\right)\mathbf{v}_z.
\end{equation*}
Recalling from \eqref{h-div-inte-stab} that $\operatorname{\pi}_H$ is a local operator, we observe that $\mathbf{\tilde{v}}_z \in \Hzerodivomega{N^1(\omega_z)}$. Further, exploiting the local stability bound \eqref{h-div-inte-stab} for $\operatorname{\pi}_H$, the fact that $\left.\mathbf{v}_z\right\rvert_{\Omega\setminus \omega_z} = \mathbf{0}$, and the local stability estimate \eqref{deco-vp-stab}, we obtain that
$$
\begin{aligned}
\|\mathbf{\tilde{v}}_z\|_{L^2\left(\omega_z\right)} + 
H\|\operatorname{div}\mathbf{\tilde{v}}_z\|_{L^2\left(\omega_z\right)} 
& \lesssim\|\mathbf{v}\|_{L^2\left(N^1\left(\omega_z\right)\right)}+H\|\operatorname{div} \mathbf{v}\|_{L^2\left(N^1\left(\omega_z\right)\right)}.
\end{aligned}
$$

Next, in view of the fact that $N^1(\omega_z)$ is contractible, it follows from \cite[Lemma 3.8]{hiptmair2007nodal} that for $d=3$ there exists a continuous linear operator $\mathbf{R}:\Hzerodivomega{N^1(\omega_z)}\rightarrow \mathbf{H}_0^1\left(N^1\left(\omega_z\right)\right)$ such that $\div(\mathbf{R}(\mathbf{w})) = \div(\mathbf{w})$ for any $\mathbf{w}\in \Hzerodivomega{N^1(\omega_z)}$, and we have (cf. \cite[Corollary 3.9]{hiptmair2007nodal}) the bound 
\begin{align}\label{opR bd}
    \|\mathbf{R}(\mathbf{w})\|_{H^1(N^1(\omega_z),\mathbb{R}^3)} \lesssim \|\div \mathbf{w}\|_{L^2(N^1(\omega_z))}\qquad\text{for all }\mathbf{w}\in \Hzerodivomega{N^1(\omega_z)}.
\end{align}
We set $\mathbf{r}_z:= \mathbf{R}(\mathbf{\tilde{v}}_z)\in \mathbf{H}_0^1\left(N^1\left(\omega_z\right)\right)$. Since $N^1(\omega_z)$ is simply-connected and observing that $\left(\mathbf{\tilde{v}}_z - \mathbf{r}_z\right)\in \Hzerodivzeroomega{N^1(\omega_z)}$, we have by \cite[Theorem 3.6]{girault2012finite} that there exists a function $\mathbf{q}_z\in \Hzerocurlomega{N^1(\omega_z)}$ such that
\begin{align}\label{dec}
    \mathbf{\tilde{v}}_z=\mathbf{r}_z+\curlbf \mathbf{q}_z,\qquad \div \mathbf{q}_z = 0.
\end{align}
Using the Poincar\'e-inequality for functions in $\mathbf{H}^1_0(N^1(\omega_z))$ and the bound \eqref{opR bd}, we find that
\begin{align}\label{rbd}
    H^{-1}\|\mathbf{r}_z\|_{L^2\left(N^1\left(\omega_z\right)\right)}\,\,\lesssim\,\,\|\nabla \mathbf{r}_z\|_{L^2\left(N^1\left(\omega_z\right)\right)} \,\,\lesssim\,\, \|\operatorname{div}\mathbf{\tilde{v}}_z\|_{L^2\left(N^1\left(\omega_z\right)\right)}.
\end{align}
In view of the fact that $\mathbf{q}_z\in \Hzerocurlomega{N^1(\omega_z)}$ and $(\mathbf{q}_z,\nabla \phi )_{L^2(N^1(\omega_z))} = 0$ for all $\phi \in H^1_0(N^1(\omega_z))$, we use the Poincar\'e-type inequality from \cite[Lemma 44.4]{ern2021finite}, the decomposition \eqref{dec}, and the bound \eqref{rbd} to obtain that
\begin{align}\label{qbd}
    H^{-1}\|\mathbf{q}_z\|_{L^2(N^1(\omega_z))} \lesssim  \|\curlbf \mathbf{q}_z\|_{L^2(N^1(\omega_z))}  \lesssim \|\mathbf{\tilde{v}}_z\|_{L^2(N^1(\omega_z))} + H\|\operatorname{div}\mathbf{\tilde{v}}_z\|_{L^2\left(N^1\left(\omega_z\right)\right)}.
\end{align}

We can now collect the local contributions and define
\begin{align}\label{sum loc cont}
    \mathbf{r}\,\,:=\,\,\sum_{z \in \mathcal{N}_H} \mathbf{r}_z \qquad\mbox{and} \qquad \mathbf{q}:=\sum_{z \in \mathcal{N}_H} \mathbf{q}_z.
\end{align}
In view of \eqref{rbd} and \eqref{qbd}, we obtain the desired decomposition \eqref{v-piHv deco} of $\mathbf{v}-\operatorname{\pi}_H(\mathbf{v}) = \sum_{z \in \mathcal{N}_H}\mathbf{\tilde{v}}_z$ together with the bound
\begin{equation*}
\begin{aligned}
 & H^{-1}\|\mathbf{r}\|_{L^2\left(T\right)}+\|\nabla \mathbf{r}\|_{L^2\left(T\right)}
\leq \sum_{z \in\mathcal{N}_H\cap N^1(T)} \left(H^{-1}\|\mathbf{r}_z\|_{L^2(T)}+\|\nabla \mathbf{r}_z\|_{L^2(T)}\right) \\
&\leq \sum_{z \in\mathcal{N}_H\cap N^1(T)} \left(H^{-1}\|\mathbf{r}_z\|_{L^2(N^1(\omega_z))}+\|\nabla \mathbf{r}_z\|_{L^2(N^1(\omega_z))}\right) \\
& \lesssim \sum_{z \in\mathcal{N}_H\cap N^1(T)} \|\operatorname{div}\mathbf{\tilde{v}}_z\|_{L^2\left(N^1\left(\omega_z\right)\right)} \lesssim H^{-1}\|\mathbf{v}\|_{L^2\left(N^3(T)\right)}+\|\operatorname{div} \mathbf{v}\|_{L^2\left(N^3(T)\right)},
\end{aligned}
\end{equation*}
and the bound
\begin{equation*}
\begin{aligned}
& H^{-1}\|\mathbf{q}\|_{L^2(T)} + \|\curlbf\mathbf{q}\|_{L^2(T)} 
\leq \sum_{z \in\mathcal{N}_H\cap N^1(T)} \left( H^{-1}\| \mathbf{q}_z\|_{L^2(T)} + \|\curlbf \mathbf{q}_z\|_{L^2(T)}\right) \\
& \lesssim \sum_{z \in\mathcal{N}_H\cap N^1(T)}\left( \|\mathbf{\tilde{v}}_z\|_{L^2\left(N^1(\omega_z)\right)} + H\|\operatorname{div} \mathbf{\tilde{v}}_z \|_{L^2\left(N^1\left(\omega_z\right)\right)} \right) \lesssim \|\mathbf{v}\|_{L^2\left(N^3(T)\right)}+H\|\operatorname{div} \mathbf{v}\|_{L^2\left(N^3(T)\right)} .
\end{aligned}
\end{equation*}
Finally, if $\div \mathbf{v}=0$, then due to \eqref{deco-vp-stab} we have that $\operatorname{div} \mathbf{v}_z = 0$ for any $z \in\mathcal{N}_H$. In view of \eqref{rbd}, we then find that $\mathbf{r}_z = \mathbf{0}$ for any $z \in\mathcal{N}_H$, and hence, $\mathbf{r} = \mathbf{0}$ by \eqref{sum loc cont}.
\end{proof}

We are now prepared to establish the exponential decay property for solutions in $\Wdivzero$ to problems with local source terms. This result is applied to quantify the decay of the element correctors.

\begin{lemma}[Exponential decay of correctors in $\Wdivzero$]\label{lem-expo-deca-phi}
For $d\in \{2,3\}$ and contractible $\Omega$, let $T \in \mathcal{T}_H$ be a coarse element and $F_T \in \left( \Wdivzero \right)^{\prime}$ a given local source functional in the sense that $F_T(\mathbf{w})=0$ whenever $\mathbf{w} \in \Wdivzero$ with $\left.\mathbf{w}\right\rvert_T = \mathbf{0}$. In other words, $F_T$ is localized to the coarse element $T$. If $\boldsymbol{\varphi}_T \in \Wdivzero$ denotes the unique solution to the problem
\begin{align}
\label{def-sol-varphi-T}
a\left(\boldsymbol{\varphi}_T, \mathbf{w}\right) =F_T(\mathbf{w}) \qquad \mathrm{for}\; \mathrm{all}\;\; \mathbf{w} \in \Wdivzero,
\end{align}
then there exists a constant $\theta\in (0,1)$, independent of $H$, $T$, $m$, and $F_T$, such that we have the bound
$$
\vertiii{\boldsymbol{\varphi}_T}_{ \Omega \backslash \mathrm{N}^m(T)} \,\lesssim \,\theta^m\vertiii{\boldsymbol{\varphi}_T} \qquad\mathrm{for}\; \mathrm{all}\;\; m \in \mathbb{N}. 
$$ 
\end{lemma}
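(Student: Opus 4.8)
The plan is to mimic the Caccioppoli-type iteration used for $H^1$-conforming LOD, but to circumvent the fact that multiplying $\boldsymbol{\varphi}_T$ by a cut-off neither preserves the divergence constraint nor keeps the function in $\ker(\pi_H)$. The remedy in dimension $d=3$ is to cut off the \emph{vector potential} of $\boldsymbol{\varphi}_T$ supplied by Lemma~\ref{lem-stable-decomposition}; the case $d=2$ follows the same template with a scalar stream function in place of the vector potential (see the appendix). Throughout I abbreviate $\mathcal{E}(j):=\vertiii{\boldsymbol{\varphi}_T}_{\Omega\setminus N^j(T)}^2$, which is non-increasing in $j$, and I write $\eta$ for the $\mathbb{P}_1$ cut-off $\eta_T^{k}$ from \eqref{cut-off-func}, so that $\eta\equiv 0$ on $N^{k}(T)$, $\eta\equiv 1$ on $\Omega\setminus N^{k+1}(T)$, $\|\nabla\eta\|_{L^\infty}\lesssim H^{-1}$, and --- since $\eta$ is piecewise affine and vanishes on $N^k(T)$ --- both $\eta$ and $\nabla\eta$ vanish on $N^{k-1}(T)\supset N^1(T)$ as soon as $k\ge 2$.

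First I would reduce the claim to a one-step estimate: there are a fixed integer $M$ (depending only on the mesh) and a constant $C_1=C_1(\beta/\alpha,\text{mesh})$ with $\mathcal{E}(k+1)\le C_1\big(\mathcal{E}(k-M)-\mathcal{E}(k+M)\big)$ for all $k\ge 2$ (with the convention $\mathcal{E}(j):=\mathcal{E}(0)$ for $j<0$). Granting this, monotonicity of $\mathcal{E}$ gives $\mathcal{E}(k+M)\le\mathcal{E}(k+1)\le C_1(\mathcal{E}(k-M)-\mathcal{E}(k+M))$, hence $\mathcal{E}(k+M)\le\theta_0\,\mathcal{E}(k-M)$ with $\theta_0:=C_1/(1+C_1)\in(0,1)$; iterating this over steps of $2M$ layers and using the trivial bound $\mathcal{E}(j)\le\vertiii{\boldsymbol{\varphi}_T}^2$ for the base indices yields $\mathcal{E}(m)\lesssim\theta_0^{\,m/(2M)}\vertiii{\boldsymbol{\varphi}_T}^2$, which is the assertion with $\theta:=\theta_0^{1/(4M)}\in(0,1)$, depending only on the contrast $\beta/\alpha$ and the mesh (not on $H$, $m$, $T$, or $F_T$).

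To prove the one-step estimate, fix $k\ge 2$. Since $\eta\boldsymbol{\varphi}_T=\boldsymbol{\varphi}_T$ on $\Omega\setminus N^{k+1}(T)$ and $\mathbf{A}^{-1}\ge\alpha\,\id$ with $\eta\ge0$, we have $\mathcal{E}(k+1)\le a(\boldsymbol{\varphi}_T,\eta\boldsymbol{\varphi}_T)$. Now invoke Lemma~\ref{lem-stable-decomposition}: as $\boldsymbol{\varphi}_T\in\Wdivzero\subset\ker(\pi_H)$ is divergence-free, there is $\mathbf{q}\in\Hzerocurl$ with $\boldsymbol{\varphi}_T=\curlbf\mathbf{q}$ and the local bound $\|\mathbf{q}\|_{L^2(K)}\lesssim H\,\|\boldsymbol{\varphi}_T\|_{L^2(N^3(K))}$ for every $K\in\mathcal{T}_H$. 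Then $\eta\mathbf{q}\in\Hzerocurl$, and $\curlbf(\eta\mathbf{q})=\eta\boldsymbol{\varphi}_T+\nabla\eta\times\mathbf{q}$ is automatically divergence-free with vanishing normal trace on $\partial\Omega$; setting $\mathbf{w}:=(\id-\pi_H)\curlbf(\eta\mathbf{q})$, the commuting property in Assumption~\ref{assu-stab-inte} gives $\div\mathbf{w}=\div\curlbf(\eta\mathbf{q})-\operatorname{P}_H\div\curlbf(\eta\mathbf{q})=0$, so $\mathbf{w}\in\Wdivzero$. Because $\eta$ and $\nabla\eta$ vanish on $N^1(T)$ we have $\curlbf(\eta\mathbf{q})=0$ there, and the locality of $\pi_H$ (a consequence of the local stability \eqref{h-div-inte-stab}) forces $\mathbf{w}\vert_T=0$; hence $a(\boldsymbol{\varphi}_T,\mathbf{w})=F_T(\mathbf{w})=0$ by \eqref{def-sol-varphi-T} and the assumed localization of $F_T$. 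Writing $\eta\boldsymbol{\varphi}_T=\mathbf{w}+\pi_H\curlbf(\eta\mathbf{q})-\nabla\eta\times\mathbf{q}$, we are left with $\mathcal{E}(k+1)\le -a(\boldsymbol{\varphi}_T,\nabla\eta\times\mathbf{q})+a(\boldsymbol{\varphi}_T,\pi_H\curlbf(\eta\mathbf{q}))$.

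It then remains to bound the two ``annular'' terms. The field $\nabla\eta\times\mathbf{q}$ is supported in the one-layer annulus $A:=N^{k+1}(T)\setminus N^k(T)$, so by Cauchy--Schwarz for $a(\cdot,\cdot)$, $\|\nabla\eta\|_{L^\infty}\lesssim H^{-1}$, the local bound on $\mathbf{q}$, and $\alpha\,\id\le\mathbf{A}^{-1}\le\beta\,\id$, the first term is $\lesssim(\beta/\alpha)^{1/2}\vertiii{\boldsymbol{\varphi}_T}_{N^3(A)}^2$. For the second term, observe that $\curlbf(\eta\mathbf{q})$ coincides with $\boldsymbol{\varphi}_T$ on $\Omega\setminus N^{k+1}(T)$ and vanishes on $N^{k-1}(T)$; since $\pi_H\boldsymbol{\varphi}_T=\mathbf{0}$ and $\pi_H$ is local, $\pi_H\curlbf(\eta\mathbf{q})$ is supported in the fixed-width annulus $N^{k+2}(T)\setminus N^{k-2}(T)$, and applying the local stability \eqref{h-div-inte-stab}, $\div\curlbf(\eta\mathbf{q})=0$, and $\curlbf(\eta\mathbf{q})=\eta\boldsymbol{\varphi}_T+\nabla\eta\times\mathbf{q}$ bounds it by $\lesssim(\beta/\alpha)^{1/2}\vertiii{\boldsymbol{\varphi}_T}_{\hat A}^2$, where $\hat A\subset N^{k+M}(T)\setminus N^{k-M}(T)$ for a suitable fixed $M$. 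Since $\vertiii{\boldsymbol{\varphi}_T}_{\hat A}^2\le\mathcal{E}(k-M)-\mathcal{E}(k+M)$, this establishes the one-step estimate and completes the argument. The crux --- and the place where the hypotheses on $\pi_H$ are essential --- is the construction of $\mathbf{w}$: it must lie in $\Wdivzero$ \emph{and} be annihilated by $F_T$; cutting off the potential $\mathbf{q}$ preserves divergence-freeness for free, applying $\id-\pi_H$ restores membership in $\ker(\pi_H)$ without spoiling it (commuting property), and the locality of $\pi_H$ gives $\mathbf{w}\vert_T=0$, while the quantitative control of the commutator $\pi_H\curlbf(\eta\mathbf{q})$ on the annulus is exactly what \eqref{h-div-inte-stab} and the local bounds of Lemma~\ref{lem-stable-decomposition} deliver.
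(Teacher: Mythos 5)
Your $d=3$ argument is correct and takes essentially the same route as the paper's proof: invoke Lemma~\ref{lem-stable-decomposition} to write the divergence-free $\boldsymbol{\varphi}_T \in \ker(\pi_H)$ as $\curlbf\mathbf{q}$ with local bounds, cut off the potential $\mathbf{q}$ by the piecewise-affine $\eta$, pass through $(\id - \pi_H)$ to land the test function in $\Wdivzero$, kill $F_T$ by locality, and reduce to two annular commutator terms which feed a Caccioppoli-type iteration. One small attribution fix in your closing paragraph: the reason $(\id-\pi_H)\curlbf(\eta\mathbf{q})$ lies in $\ker(\pi_H)$ is the \emph{projection} property $\pi_H\circ\pi_H=\pi_H$, not the commuting property; the commuting property is what guarantees $\div\mathbf{w}=0$.

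Where your sketch genuinely falls short of the statement is the claim that $d=2$ ``follows the same template with a scalar stream function in place of the vector potential.'' In 2D one does get $\boldsymbol{\varphi}_T=\curlbf q_T$ with $q_T\in H^1_0(\Omega)$, but there is \emph{no} local analogue of the estimate $\|\mathbf{q}\|_{L^2(K)}\lesssim H\|\boldsymbol{\varphi}_T\|_{L^2(N^3(K))}$ that you rely on to control $\nabla\eta\times\mathbf{q}$ and the commutator $\pi_H\curlbf(\eta\mathbf{q})$; all one has is $|\nabla q_T|=|\boldsymbol{\varphi}_T|$ a.e.\ globally. Without that local $H$-gain, your bound on $a(\boldsymbol{\varphi}_T, q_T\curlbf\eta)$ does not close. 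The paper handles $d=2$ differently: it uses the local interpolation-error estimate from Remark~\ref{remark-error-estimates-piH} together with the observation that $\eta$ is piecewise $\mathbb{P}_1$, so that the piecewise constant $\curlbf\eta$ multiplied by elementwise mean values of $q_T$ lies in $\mathcal{RT}_k(K)$, allowing a Poincar\'e inequality (zero-mean subtraction on $K$) to recover the factor $H$. So while the high-level shape of the argument persists, the key technical ingredient in 2D is different, and a $d=2$ proof would not follow from your $d=3$ sketch by mere substitution.
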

\begin{proof}
Let $d=3$. The proof for $d=2$ is given in the appendix, Section \ref{appendix:section:decay-2D}. Note that it is sufficient to prove the result for all sufficiently large $m$. In the following, we assume that $m \in \mathbb{N}$ with $m\geq 7$ is fixed. With this, let $\eta:= \eta^{m-4}_T$ denote the (continuous and piecewise linear) cut-off function introduced in \eqref{cut-off-func} with $m$ replaced $m-4$. In particular, it holds 
$$
\eta=0 \quad \text {in} \quad \mathrm{N}^{m-4}(T), \qquad \eta=1 \quad \text{in} \quad \Omega \backslash \mathrm{N}^{m-3}(T),\qquad  \|\,\lvert \nabla \eta\rvert\, \|_{L^{\infty}(\Omega)}\lesssim H^{-1}.
$$ 
For brevity, we will write $R^m_T:= \overline{N^{m-3}(T)\backslash N^{m-4}(T)}$.

In view of the non-negativity of $\eta$ and the uniform ellipticity of $\mathbf{A}^{-1}$, we have that
\begin{equation}\label{coer-bili}
\vertiii{\boldsymbol{\varphi}_T}_{\Omega\backslash N^m(T)}^2 = (\mathbf{A}^{-1}\boldsymbol{\varphi}_T,\boldsymbol{\varphi}_T)_{L^2(\Omega\backslash N^m(T))} \leq \left(\eta \, \mathbf{A}^{-1} \boldsymbol{\varphi}_T, \boldsymbol{\varphi}_T\right)_{L^2(\Omega)}.
\end{equation}
Noting that $\operatorname{div}\boldsymbol{\varphi}_T = 0$ and $\operatorname{\pi}_H(\boldsymbol{\varphi}_T) = \mathbf{0}$, we can apply Lemma \ref{lem-stable-decomposition} to deduce that there exists a function $\mathbf{q}_T \in \Hzerocurl$ such that
\begin{equation}\label{w-curl-q}
\boldsymbol{\varphi}_T = \boldsymbol{\varphi}_T - \operatorname{\pi}_H(\boldsymbol{\varphi}_T) = \curlbf \mathbf{q}_T.
\end{equation}
Then, using the definition of $a(\cdot,\cdot)$ and the fact that $\curlbf\left(\eta\,\mathbf{q}_T\right) = \eta\, \curlbf\mathbf{q}_T + \nabla\eta \times  \mathbf{q}_T$, we obtain
\begin{equation}\label{bili-spli}
\begin{aligned}
& \left(\eta \, \mathbf{A}^{-1} \boldsymbol{\varphi}_T, \boldsymbol{\varphi}_T\right)_{L^2(\Omega)} = a(\eta\, \curlbf\mathbf{q}_T, \boldsymbol{\varphi}_T) = a\left( \curlbf\left(\eta\,\mathbf{q}_T\right), \boldsymbol{\varphi}_T\right) - a\left(\nabla\eta \times  \mathbf{q}_T, \boldsymbol{\varphi}_T\right)\\
= &\,\, a\left( \operatorname{\pi}_H(\curlbf\left(\eta\,\mathbf{q}_T)\right), \boldsymbol{\varphi}_T\right) + a\left( \left(\id-\operatorname{\pi}_H\right)\curlbf\left(\eta\,\mathbf{q}_T\right), \boldsymbol{\varphi}_T\right) - a\left(\nabla\eta \times  \mathbf{q}_T, \boldsymbol{\varphi}_T\right).
\end{aligned}
\end{equation}
Observing that $\mathbf{r} :=\left(\id-\operatorname{\pi}_H\right)\curlbf\left(\eta\,\mathbf{q}_T\right) \in  \Wdivzero$ and $\left.\mathbf{r}\right\rvert_T = \mathbf{0}$, we see that
\begin{equation}\label{bili-spli-1}
a\left( \left(\id-\operatorname{\pi}_H\right)\curlbf\left(\eta\,\mathbf{q}_T\right), \boldsymbol{\varphi}_T\right) = a\left(\boldsymbol{\varphi}_T,\mathbf{r}\right) = F_T(\mathbf{r}) = 0.
\end{equation}
Since we have on $\Omega \backslash \mathrm{N}^{m-2}(T)$ that $\operatorname{\pi}_H(\curlbf\left(\eta\,\mathbf{q}_T\right))=\operatorname{\pi}_H(\curlbf\mathbf{q}_T) = \operatorname{\pi}_H (\boldsymbol{\varphi}_T) =\mathbf{0}$, and since $\eta=0$ on $\mathrm{N}^{m-4}(T)$, we have that $\operatorname{supp} (\operatorname{\pi}_H(\curlbf\left(\eta\,\mathbf{q}_T\right))) \subset N^1(R_T^m)$, and hence,
\begin{equation}
\label{est-piH-curl}
a\left( \operatorname{\pi}_H(\curlbf\left(\eta\,\mathbf{q}_T\right)), \boldsymbol{\varphi}_T\right) \,\,\lesssim \,\,  \beta \,\|\curlbf\left(\eta\,\mathbf{q}_T\right)\|_{L^2\left(N^2(R_T^m)\right)} \|\boldsymbol{\varphi}_T\|_{L^2\left(N^1(R_T^m)\right)}.
\end{equation}
Using the properties of $\eta$, the identity \eqref{w-curl-q}, and Lemma \ref{lem-stable-decomposition}, we obtain that
\begin{equation}\label{control-by-phi-T}
\begin{aligned}
& \|\curlbf\left(\eta\,\mathbf{q}_T\right)\|_{L^2\left(N^2(R^m_T)\right)} \,\,\leq \,\,\|\eta\,\boldsymbol{\varphi}_T\|_{L^2\left(N^2(R^m_T)\right)}+ \|\nabla \eta \times \mathbf{q}_T\|_{L^2\left(N^2(R^m_T)\right)}\\
\lesssim \, &\,\, \|\boldsymbol{\varphi}_T\|_{L^2\left(N^{m-1}(T)\backslash N^{m-4}(T)\right)} +  H^{-1}\|\mathbf{q}_T\|_{L^2\left(N^{m-3}(T)\backslash N^{m-4}(T)\right)} \,\,\lesssim \,\,\|\boldsymbol{\varphi}_T\|_{L^2\left(N^{m}(T)\backslash N^{m-7}(T)\right)}.
\end{aligned}
\end{equation}
Therefore, combining the bounds \eqref{est-piH-curl} and \eqref{control-by-phi-T}, we find
\begin{equation}\label{bili-spli-2}
a\left( \operatorname{\pi}_H(\curlbf\left(\eta\,\mathbf{q}_T\right)), \boldsymbol{\varphi}_T\right) \lesssim \beta \|\boldsymbol{\varphi}_T\|^2_{L^2\left(N^{m}(T)\backslash N^{m-7}(T)\right)}.
\end{equation}
With similar arguments, we have that
\begin{equation}\label{bili-spli-3}
a\left(\nabla\eta \times  \mathbf{q}_T, \boldsymbol{\varphi}_T\right) \,\,\leq\,\, \beta\, \|\nabla\eta \times  \mathbf{q}_T\|_{L^2(R^m_T)} \|\boldsymbol{\varphi}_T\|_{L^2(R^m_T)} \lesssim \beta \|\boldsymbol{\varphi}_T\|^2_{L^2\left(N^{m}\backslash N^{m-7}\right)}.
\end{equation}
Combining the estimates \eqref{coer-bili}, \eqref{bili-spli}, \eqref{bili-spli-1}, \eqref{bili-spli-2}, and \eqref{bili-spli-3} altogether, we conclude that for some constant $C>0$ independent of $T$, $m$, $H$, and $\mathbf{A}$, it holds
\begin{equation*}
\begin{aligned}
  \vertiii{\boldsymbol{\varphi}_T}_{\Omega\backslash N^m(T)}^2 &\leq C \beta \|\boldsymbol{\varphi}_T\|_{L^2\left(N^m(T)\backslash N^{m-7}(T)\right)}^2 
 \\ &\leq  C \frac{\beta}{\alpha} \vertiii{\boldsymbol{\varphi}_T}_{N^m(T)\backslash N^{m-7}(T)}^2 
 \leq  C \frac{\beta}{\alpha} 
 \left(\vertiii{\boldsymbol{\varphi}_T}_{\Omega\backslash N^{m-7}(T)}^2  -\vertiii{\boldsymbol{\varphi}_T}_{\Omega\backslash N^m(T)}^2 \right). 
\end{aligned}
\end{equation*}
Setting $\theta := \frac{C \frac{\beta}{\alpha}}{1+C \frac{\beta}{\alpha}} < 1$, we find that $\vertiii{\boldsymbol{\varphi}_T}_{\Omega\backslash N^m(T)}^2 \leq \theta \vertiii{\boldsymbol{\varphi}_T}_{\Omega\backslash N^{m-7}(T)}^2$. A recursive application of this estimate yields
\begin{equation*}
\vertiii{\boldsymbol{\varphi}_T}_{\Omega\backslash N^m(T)}^2 \lesssim \theta^{\lfloor m / 7\rfloor}\vertiii{\boldsymbol{\varphi}_T}^2,
\end{equation*}
which completes the proof.
\end{proof}
Next, we quantify the error between the ideal solution $\boldsymbol{\varphi}_T \in \Wdivzero$ to problem \eqref{def-sol-varphi-T} and its corresponding approximation $\boldsymbol{\varphi}_T^m$ obtained by truncating the problem to the subset $N^m(T)$.

\begin{lemma}\label{lem-expo-deca-phi-phi-l}
For $d\in\{2,3\}$ and $\Omega$ contractible, let $\boldsymbol{\varphi}_T \in \Wdivzero$ denote the solution to the global problem \eqref{def-sol-varphi-T}, and let $\boldsymbol{\varphi}_T^m \in W_{\operatorname{div}\hspace{-1pt}0}(N^m(T))$ denote the solution to the corresponding truncated problem 
\begin{equation*}
a\left(\boldsymbol{\varphi}_{T}^m, \mathbf{w}\right) = F_T\left(\mathbf{w}\right) \qquad \mathrm{for}\; \mathrm{all}\;\; \mathbf{w} \in W_{\operatorname{div}\hspace{-1pt}0}(N^m(T)),
\end{equation*}
where $m\in \mathbb{N}$, and $F_T \in \left(W_{\operatorname{div}\hspace{-1pt}0}(N^m(T))\right)'$ is again a local source functional with $F_T(\mathbf{w})=0$ whenever $\mathbf{w} \in W_{\operatorname{div}\hspace{-1pt}0}(N^m(T))$ with $\left.\mathbf{w}\right\rvert_T = \mathbf{0}$. Then, there exists a decay rate $\theta\in (0,1)$, independent of $H$, $T$, $m$, and $F_T$, such that
$$
\vertiii{\boldsymbol{\varphi}_T-\boldsymbol{\varphi}_{T}^m} \,\,\lesssim \,\,\theta^m\vertiii{\boldsymbol{\varphi}_T}.
$$
\end{lemma}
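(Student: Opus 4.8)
The plan is to exploit the Galerkin (best-approximation) structure of the truncated problem and then to construct, by cutting off a curl-potential of $\boldsymbol{\varphi}_T$, an explicit competitor that is admissible for the localized space and whose distance to $\boldsymbol{\varphi}_T$ is controlled by $\boldsymbol{\varphi}_T$ on the complement of a patch $N^{m-O(1)}(T)$. Concretely: since $W_{\operatorname{div}\hspace{-1pt}0}(N^m(T)) \subset \Wdivzero$ and both $\boldsymbol{\varphi}_T$ and $\boldsymbol{\varphi}_T^m$ are defined by testing against the same functional $F_T$, subtracting their defining equations gives $a(\boldsymbol{\varphi}_T-\boldsymbol{\varphi}_T^m,\mathbf{w})=0$ for all $\mathbf{w}\in W_{\operatorname{div}\hspace{-1pt}0}(N^m(T))$. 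Hence $\boldsymbol{\varphi}_T^m$ is the $a$-orthogonal projection of $\boldsymbol{\varphi}_T$ onto $W_{\operatorname{div}\hspace{-1pt}0}(N^m(T))$, so $\vertiii{\boldsymbol{\varphi}_T-\boldsymbol{\varphi}_T^m}=\inf_{\mathbf{w}\in W_{\operatorname{div}\hspace{-1pt}0}(N^m(T))}\vertiii{\boldsymbol{\varphi}_T-\mathbf{w}}$, and it suffices to produce one good $\mathbf{w}$.

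For the construction, I would use that $\operatorname{div}\boldsymbol{\varphi}_T=0$ and $\pi_H(\boldsymbol{\varphi}_T)=\mathbf{0}$, so that for $d=3$ Lemma~\ref{lem-stable-decomposition} provides $\mathbf{q}_T\in\Hzerocurl$ with $\boldsymbol{\varphi}_T=\curlbf\mathbf{q}_T$ and the local bound $H^{-1}\|\mathbf{q}_T\|_{L^2(K)}+\|\curlbf\mathbf{q}_T\|_{L^2(K)}\lesssim\|\boldsymbol{\varphi}_T\|_{L^2(N^3(K))}$ for every $K\in\mathcal{T}_H$ (for $d=2$ the same argument runs verbatim with the analogous two-dimensional regular decomposition from Appendix~\ref{appendix:section:decay-2D} in place of Lemma~\ref{lem-stable-decomposition}). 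Fixing a small fixed integer $k$ (of order one; $k=2$ suffices), letting $\eta:=\eta_T^{m-k}$ be the cut-off function from \eqref{cut-off-func}, I would set
\[
\mathbf{w}:=\left(\id-\pi_H\right)\curlbf\!\left((1-\eta)\,\mathbf{q}_T\right).
\]
Then $\mathbf{w}\in W_{\operatorname{div}\hspace{-1pt}0}(N^m(T))$: it lies in $\ker\pi_H=W$ since $\pi_H$ is a projection; it is divergence-free since $\operatorname{div}\circ\curlbf=0$ and $\operatorname{div}\circ\pi_H=\operatorname{P}_H\circ\operatorname{div}$; it has vanishing normal trace on $\partial\Omega$ because $(1-\eta)\mathbf{q}_T\in\Hzerocurl$ forces $\curlbf((1-\eta)\mathbf{q}_T)\cdot\mathbf{n}|_{\partial\Omega}=0$ and $\pi_H$ preserves this; and, since $1-\eta$ is supported in $\overline{N^{m-k+1}(T)}$ and $\pi_H$ enlarges supports by only one layer (by \eqref{h-div-inte-stab}), $\mathbf{w}$ is supported in $N^m(T)$ as soon as $k\ge2$.

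With this choice, $\boldsymbol{\varphi}_T-\mathbf{w}=(\id-\pi_H)\curlbf(\eta\,\mathbf{q}_T)$, because $(\id-\pi_H)\curlbf\mathbf{q}_T=(\id-\pi_H)\boldsymbol{\varphi}_T=\boldsymbol{\varphi}_T$. I would then estimate, using uniform ellipticity of $\mathbf{A}^{-1}$ to pass to the $L^2$-norm, the $\mathbf{H}(\operatorname{div})$-stability \eqref{h-div-inte-stab} of $\pi_H$ (note $\operatorname{div}\curlbf(\eta\mathbf{q}_T)=0$), the product rule $\curlbf(\eta\mathbf{q}_T)=\eta\,\boldsymbol{\varphi}_T+\nabla\eta\times\mathbf{q}_T$, the bound $\|\nabla\eta\|_{L^\infty(\Omega)}\lesssim H^{-1}$, and the local potential estimate from Lemma~\ref{lem-stable-decomposition} on the transition region of $\eta$, to obtain
\[
\vertiii{\boldsymbol{\varphi}_T-\mathbf{w}}\;\lesssim\;\|\curlbf(\eta\mathbf{q}_T)\|_{L^2(\Omega)}\;\lesssim\;\|\boldsymbol{\varphi}_T\|_{L^2(\Omega\setminus N^{m-k-3}(T))}\;\lesssim\;\vertiii{\boldsymbol{\varphi}_T}_{\Omega\setminus N^{m-k-3}(T)},
\]
where the last inequality is again uniform ellipticity and the $N^{m-k-3}(T)$ arises because $\eta$ vanishes on $N^{m-k}(T)$ while the $N^3$-locality of the decomposition costs three further layers. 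Finally, Lemma~\ref{lem-expo-deca-phi} applied to $\boldsymbol{\varphi}_T$ gives $\vertiii{\boldsymbol{\varphi}_T}_{\Omega\setminus N^{m-k-3}(T)}\lesssim\theta^{m-k-3}\vertiii{\boldsymbol{\varphi}_T}\lesssim\theta^m\vertiii{\boldsymbol{\varphi}_T}$, the fixed factor $\theta^{-(k+3)}$ being absorbed into the hidden constant, which closes the argument.

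The main obstacle is the combinatorics of supports and localization layers: one must verify simultaneously that $\mathbf{w}$ is genuinely supported in $N^m(T)$ (not merely in $\Wdivzero$) and that the final right-hand side only ever involves $\boldsymbol{\varphi}_T$ restricted to the complement of a patch of the form $N^{m-O(1)}(T)$, so that Lemma~\ref{lem-expo-deca-phi} applies with no loss in the exponential rate. The extra layers coming from the $N^3$-locality of the regular decomposition, the $N^1$-locality of $\pi_H$, and the width of the cut-off transition all have to be tracked carefully, but each contributes only an $H$- and $m$-independent constant, so the rate $\theta$ is untouched.
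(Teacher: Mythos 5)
Your treatment of the case $d=3$ follows the paper's argument essentially step for step: both exploit the Galerkin best-approximation property in $W_{\operatorname{div}\hspace{-1pt}0}(N^m(T))$, both invoke the regular decomposition $\boldsymbol{\varphi}_T = (\id-\pi_H)\curlbf\mathbf{q}_T$ from Lemma~\ref{lem-stable-decomposition} with the accompanying local bound $H^{-1}\|\mathbf{q}_T\|_{L^2(T')}\lesssim\|\boldsymbol{\varphi}_T\|_{L^2(N^3(T'))}$, both construct the competitor $(\id-\pi_H)\curlbf(\chi\,\mathbf{q}_T)$ with a cut-off $\chi$ equal to $1$ near $T$, both use the $\Hdiv$-stability of $\pi_H$ to pass to $\|\curlbf((1-\chi)\mathbf{q}_T)\|_{L^2}$, and both finish with Lemma~\ref{lem-expo-deca-phi}. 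Even the layer bookkeeping (landing on $\Omega\setminus N^{m-5}(T)$) is consistent, and your explicit verification that the competitor lands in $W_{\operatorname{div}\hspace{-1pt}0}(N^m(T))$ is a welcome elaboration of a step the paper states without detail.

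Your dismissal of the case $d=2$ as running ``verbatim'' is not correct, however, and this is a real gap. The two-dimensional replacement for Lemma~\ref{lem-stable-decomposition} is Lemma~\ref{lemma:decomposition:2d}, which only yields a global stream function $q_T\in H^1_0(\Omega)$ with $\boldsymbol{\varphi}_T = \curlbf q_T$ and the pointwise relation $|\nabla q_T| = |\boldsymbol{\varphi}_T|$; it does \emph{not} come with a local $L^2$-estimate of the form $H^{-1}\|q_T\|_{L^2(T')}\lesssim\|\boldsymbol{\varphi}_T\|_{L^2(N^3(T'))}$, and Appendix~\ref{appendix:section:decay-2D} explicitly highlights this as ``a major difference to the case $d=3$.'' Without that local bound your estimate of the term $\|\nabla\eta\times\mathbf{q}_T\|_{L^2}$ on the transition layer does not go through. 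The paper's two-dimensional proof instead applies the local interpolation error estimate of Remark~\ref{remark-error-estimates-piH} directly to $(\id-\pi_H)\curlbf((1-\eta)q_T)$ and, in the resulting element-local best-approximation term, subtracts the elementwise average $\tfrac{1}{|K|}(1,q_T)_{L^2(K)}$ from $q_T$ — using that $\tfrac{1}{|K|}(1,q_T)_{L^2(K)}\curlbf(1-\eta)\in\mathcal{RT}_k(K)$ since $1-\eta$ is piecewise $\mathbb{P}_1$ — and then invokes an element-local Poincar\'e inequality together with $|\nabla q_T|=|\boldsymbol{\varphi}_T|$. Some substitute along these lines is necessary; your argument, as written, only covers $d=3$.
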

\begin{proof}
Let $d=3$. The proof for $d=2$ is given in the appendix, Section \ref{appendix:section:decay-2D}.

Let $m\in \mathbb{N}$ with $m\geq 5$. As $\boldsymbol{\varphi}_{T}^m$ is the Galerkin approximation of $\boldsymbol{\varphi}_{T}$ in the closed subspace $W_{\operatorname{div}\hspace{-1pt}0}(N^m(T))\subset \Wdivzero$, we have the best-approximation bound
\begin{equation}
\label{best-approx-phiTm}
\vertiii{\boldsymbol{\varphi}_T-\boldsymbol{\varphi}_{T}^m} \,\,\leq\,\, \inf_{\mathbf{w} \in W_{\operatorname{div}\hspace{-1pt}0}(N^m(T))} \vertiii{\boldsymbol{\varphi}_T-\mathbf{w} }.
\end{equation}
Consider the cut-off function $\eta:= 1-\eta_{m-2}^T$, where $\eta_{m-2}^T$ is the function defined in \eqref{cut-off-func} with $m$ replaced by $m-2$. In particular,
$$
\eta=1 \quad \text {in} \quad \mathrm{N}^{m-2}(T), \qquad\quad \eta=0 \quad \text{in} \quad \Omega \backslash \mathrm{N}^{m-1}(T).
$$ 
By Lemma \ref{lem-stable-decomposition}, we have $\boldsymbol{\varphi}_T = (\id-\operatorname{\pi}_H)\boldsymbol{\varphi}_T  = \curlbf \mathbf{q}_T = (\id-\operatorname{\pi}_H)\curlbf \mathbf{q}_T$ for some $\mathbf{q}_T \in \Hzerocurl$. With the choice $\mathbf{w}=\tilde{\boldsymbol{\varphi}}_T^m := (\id-\operatorname{\pi}_H)\curlbf(\eta\,\mathbf{q}_T) \in W_{\operatorname{div}\hspace{-1pt}0}(N^m(T))$ in the best-approximation estimate \eqref{best-approx-phiTm}, we obtain that
\begin{equation*}
\vertiii{\boldsymbol{\varphi}_T-\tilde{\boldsymbol{\varphi}}_T^m} 
\,=\,
\vertiii{ (\id-\operatorname{\pi}_H)\curlbf \mathbf{q}_T -(\id-\operatorname{\pi}_H)\curlbf(\eta\,\mathbf{q}_T) }
\,\overset{\eqref{h-div-inte-stab}}{\lesssim} \, \sqrt{\beta} \, \|\curlbf\left( \left( 1- \eta\right)\mathbf{q}_T \right) \|_{L^2(\Omega)}.
\end{equation*}
Using arguments similar to \eqref{control-by-phi-T}, we have
\begin{equation*}
\|\curlbf\left( \left( 1- \eta\right)\mathbf{q}_T \right) \|_{L^2(\Omega)} \lesssim  \frac{1}{\sqrt{\alpha}}\vertiii{\boldsymbol{\varphi}_T}_{ \Omega\backslash N^{m-5}(T)}.
\end{equation*}
By Lemma \ref{lem-expo-deca-phi}, we also have that
\begin{equation*}
\vertiii{\boldsymbol{\varphi}_T}_{\Omega\backslash N^{m-5}(T)} \lesssim \, \theta^m \vertiii{\boldsymbol{\varphi}_T}.
\end{equation*}
Combining the previous estimates concludes the proof.
\end{proof}

We are now ready to quantify the error that arises from the approximation of the ideal corrector $\mathcal{C}$ by localized (truncated) element contributions. 

\begin{lemma}[Exponential decay of localization error]\label{lem-expo-deca-loca-erro}
For $d\in\{2,3\}$ and $\Omega$ contractible, we let $\mathcal{C}:\Hzerodiv \rightarrow \Wdivzero$ denote the ideal corrector from Definition \ref{def:ideal-corrector}, and $\mathcal{C}^m :\Hzerodiv \rightarrow \Wdivzero$ denote the corresponding localized corrector from Definition \ref{definition:localized-corrector} for a given number of layers $m\in \mathbb{N}_0$. Then, there exists a decay rate $\theta\in (0,1)$, independent of $m$ and $H$, such that
\begin{equation*}
\vertiii{\left(\operatorname{\mathcal{C}}-\operatorname{\mathcal{C}}^m\right)\mathbf{v}}
\,\, \lesssim \,\, \left( m^{\frac{d}{2}} + 1 \right)\theta^m \, \vertiii{\mathbf{v}}
\qquad \mathrm{for}\; \mathrm{all}\;\; \mathbf{v}\in \Hzerodiv.
\end{equation*}
\end{lemma}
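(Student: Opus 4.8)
The plan is to reduce the estimate on the global localization error $\vertiii{(\mathcal{C}-\mathcal{C}^m)\mathbf{v}}$ to the element-wise decay estimates from Lemmas \ref{lem-expo-deca-phi} and \ref{lem-expo-deca-phi-phi-l}, and then control the sum over all coarse elements by a coloring/finite-overlap argument. First I would write $(\mathcal{C}-\mathcal{C}^m)\mathbf{v} = \sum_{T\in\mathcal{T}_H}(\mathcal{C}_T - \mathcal{C}_T^m)\mathbf{v}$, where each summand $(\mathcal{C}_T-\mathcal{C}_T^m)\mathbf{v}$ is the difference between the global solution $\boldsymbol{\varphi}_T\in\Wdivzero$ and the truncated solution $\boldsymbol{\varphi}_T^m\in W_{\operatorname{div}0}(N^m(T))$ of problem \eqref{def-sol-varphi-T} with the local functional $F_T(\mathbf{w}) := a_T(\mathbf{v},\mathbf{w})$. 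Note $F_T$ is indeed localized to $T$ in the sense required, and $\vertiii{\boldsymbol{\varphi}_T}^2 = a_T(\mathbf{v},\boldsymbol{\varphi}_T) \le \vertiii{\mathbf{v}}_T\,\vertiii{\boldsymbol{\varphi}_T}$, so $\vertiii{\boldsymbol{\varphi}_T} \le \vertiii{\mathbf{v}}_T = \vertiii{\mathbf{v}}_T$. Lemma \ref{lem-expo-deca-phi-phi-l} then gives $\vertiii{(\mathcal{C}_T-\mathcal{C}_T^m)\mathbf{v}} = \vertiii{\boldsymbol{\varphi}_T - \boldsymbol{\varphi}_T^m} \lesssim \theta^m \vertiii{\boldsymbol{\varphi}_T} \lesssim \theta^m\vertiii{\mathbf{v}}_T$.

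The next step is the global assembly. I would estimate $\vertiii{(\mathcal{C}-\mathcal{C}^m)\mathbf{v}}^2$ by splitting the domain: on each coarse element $K\in\mathcal{T}_H$, the contribution $(\mathcal{C}_T-\mathcal{C}_T^m)\mathbf{v}$ restricted to $K$ is nonzero only if $K$ lies in the support of $\boldsymbol{\varphi}_T$ or $\boldsymbol{\varphi}_T^m$. Using the energy orthogonality in $\Wdivzero$ that makes $\boldsymbol{\varphi}_T-\boldsymbol{\varphi}_T^m$ ``extend'' the source from $T$, and the standard LOD trick of writing $(\mathcal{C}-\mathcal{C}^m)\mathbf{v}$ tested against a generic $\mathbf{z}\in\Wdivzero$, one reduces to bounding $\sum_T a((\mathcal{C}_T-\mathcal{C}_T^m)\mathbf{v}, \mathbf{z})$. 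Introducing cut-off functions $\eta_T^{m+1}$ and the associated partition of the correction terms, each term $a((\mathcal{C}_T-\mathcal{C}_T^m)\mathbf{v},\mathbf{z})$ is supported (up to a few layers) in $\Omega\setminus N^{m-c}(T)$, so Lemma \ref{lem-expo-deca-phi} applies. Then Cauchy--Schwarz gives
\begin{align*}
a((\mathcal{C}-\mathcal{C}^m)\mathbf{v},\mathbf{z}) \lesssim \Big(\sum_{T\in\mathcal{T}_H} \vertiii{(\mathcal{C}_T-\mathcal{C}_T^m)\mathbf{v}}^2\Big)^{1/2}\Big(\sum_{T\in\mathcal{T}_H}\vertiii{\mathbf{z}}_{\omega_T}^2\Big)^{1/2},
\end{align*}
where $\omega_T$ is a patch of controlled layer-count around $T$. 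The second factor is bounded by $C m^{d/2}\vertiii{\mathbf{z}}$ because a point of $\Omega$ lies in at most $O(m^d)$ of the patches $N^{m+c}(T)$ (finite-overlap from shape-regularity). Choosing $\mathbf{z} = (\mathcal{C}-\mathcal{C}^m)\mathbf{v}$ and dividing through yields $\vertiii{(\mathcal{C}-\mathcal{C}^m)\mathbf{v}} \lesssim m^{d/2}\theta^m\big(\sum_T \vertiii{\mathbf{v}}_T^2\big)^{1/2}$, and since the $\vertiii{\mathbf{v}}_T^2$ sum to $\vertiii{\mathbf{v}}^2$, we get the claimed bound (with the extra $+1$ absorbing the harmless small-$m$ regime).

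I expect the main obstacle to be the careful bookkeeping of the overlap constant and the ``locality radius'' in the global assembly, i.e., making precise which patches $N^{m+c}(T)$ a given correction term $(\mathcal{C}_T-\mathcal{C}_T^m)\mathbf{v}$ actually acts on after the cut-off manipulations, and verifying that each $\mathbf{z}\in\Wdivzero$ gets ``charged'' only $O(m^d)$ times — this is where the factor $m^{d/2}$ enters and where the argument for $\Hdiv$ differs slightly from the $H^1$-LOD because one must track both the correction functions and their curl-potential representations from Lemma \ref{lem-stable-decomposition}. The energy-norm orthogonality of the $\Wdivzero$-problems and the already-proven element-wise decay do the real work; the remaining effort is combinatorial and follows the now-standard LOD template (cf.\ \cite{MP21,LODActa21}).
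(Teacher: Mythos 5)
Your proposal follows the same route as the paper's proof: decompose $(\mathcal{C}-\mathcal{C}^m)\mathbf{v}$ into element contributions, apply the element-wise estimate $\vertiii{(\mathcal{C}_T-\mathcal{C}_T^m)\mathbf{v}}\lesssim\theta^m\vertiii{\mathbf{v}}_T$ from Lemma~\ref{lem-expo-deca-phi-phi-l}, and close via Cauchy--Schwarz together with a finite-overlap count contributing the $m^{d/2}$ factor after choosing $\mathbf{z}=(\mathcal{C}-\mathcal{C}^m)\mathbf{v}$. One point to tighten: the localization you describe is stated backwards. The cut-off manipulation (cutting off the curl potential $\mathbf{q}$ of $\mathbf{e}:=(\mathcal{C}-\mathcal{C}^m)\mathbf{v}$ with $\eta_T^{m+1}$ and passing through $\id-\pi_H$) produces $\mathbf{e}_T$ supported away from $N^m(T)$; the orthogonality $a(\mathbf{e}_T,(\mathcal{C}_T-\mathcal{C}_T^m)\mathbf{v})=0$ — which holds because $\mathbf{e}_T$ vanishes both on $T$ (killing the source $a_T(\mathbf{v},\cdot)$) and on $N^m(T)$ (disjoint from $\operatorname{supp}\mathcal{C}_T^m\mathbf{v}$) — then lets you replace the test function by $\mathbf{e}-\mathbf{e}_T$, which is supported in $N^{m+c}(T)$, \emph{not} in $\Omega\setminus N^{m-c}(T)$. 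It is this local support of $\mathbf{e}-\mathbf{e}_T$ that justifies the factor $\vertiii{\mathbf{z}}_{\omega_T}$ in your Cauchy--Schwarz step, and the overlap bound $\sum_T\vertiii{\mathbf{e}}_{N^{m+c}(T)}^2\lesssim(m^d+1)\vertiii{\mathbf{e}}^2$ gives the final exponent $d/2$.
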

\begin{proof}
Let $d=3$. The proof for $d=2$ is given in the appendix, Section \ref{appendix:section:decay-2D}. 

Let $\mathbf{v}\in \Hzerodiv$ and denote the error by $\mathbf{e}:=\left(\operatorname{\mathcal{C}}-\operatorname{\mathcal{C}}^m\right)\mathbf{v} \in \Wdivzero$. For each $T \in \mathcal{T}_H$ consider the cut-off $\eta_T:=\eta_{m+1}^T$ as defined in \eqref{cut-off-func} with $m$ replaced by $m+1$. It holds
\begin{equation*}
\eta_T=0 \quad \text {in} \quad \mathrm{N}^{m+1}(T), \qquad\quad \eta_T=1 \quad \text {in} \quad \Omega \backslash \mathrm{N}^{m+2}(T).
\end{equation*}
Using Lemma \ref{lem-stable-decomposition}, we represent the error as 
\begin{align*}
    \mathbf{e} =\left( \id-\operatorname{\pi}_H \right) \mathbf{e} = \curlbf \mathbf{q} = \left( \id-\operatorname{\pi}_H \right)\curlbf \mathbf{q}
\end{align*}
for some $\mathbf{q} \in \Hzerocurl$. With this, consider the local error contributions $\mathbf{e}_T: = (\id-\operatorname{\pi}_H)\curlbf(\eta_T \mathbf{q})$ which vanish in $\mathrm{N}^{m}(T)$. Hence,
\begin{equation*}
a(\mathbf{e}_T, \left(\operatorname{\mathcal{C}}_T-\operatorname{\mathcal{C}}_{T}^m\right) \mathbf{v} ) = a(\mathbf{e}_T, \operatorname{\mathcal{C}}_T \mathbf{v}) = (\mathbf{A}^{-1} \mathbf{e}_T,  \mathbf{v})_{L^2(T)} = 0,
\end{equation*}
and 
\begin{equation*}
\mathbf{e} - \mathbf{e}_T = \left( \id - \operatorname{\pi}_H \right)\curlbf\left(\left(1-\eta_T\right)\mathbf{q}\right) \in W_{\operatorname{div}\hspace{-1pt}0}(N^{m+3}(T)).
\end{equation*}
Collecting the above identities and applying Lemma \ref{lem-expo-deca-phi-phi-l}, we find that
\begin{eqnarray*}
\vertiii{\mathbf{e}}^2 
&=& \sum_{T \in \mathcal{T}_H} a(\mathbf{e}, \left(\operatorname{\mathcal{C}}_T-\operatorname{\mathcal{C}}_{T}^m\right) \mathbf{v} )  
\,\,=\,\, \sum_{T \in \mathcal{T}_H} a(\mathbf{e}-\mathbf{e}_T, \left(\operatorname{\mathcal{C}}_T-\operatorname{\mathcal{C}}_{T}^m\right) \mathbf{v} ) \\
&= & \sum_{T \in \mathcal{T}_H} a(\left(\id - \operatorname{\pi}_H \right)\curlbf\left(\left(1-\eta_T\right)\mathbf{q}\right), \left(\operatorname{\mathcal{C}}_T-\operatorname{\mathcal{C}}_{T}^m\right) \mathbf{v} ) \\
&\lesssim& \theta^m  \sum_{T \in \mathcal{T}_H} \vertiii{ \mathbf{e} }_{N^{m+5}(T)} \vertiii{ \mathbf{v} }_{T} \\
&\lesssim & \sqrt{C_m} \,\, \theta^m \,\,
\vertiii{ \mathbf{e} }\,\, \vertiii{ \mathbf{v} }.
\end{eqnarray*}
Here, introducing $\chi_{m,T}\in L^{\infty}(\Omega)$ with $\left.\chi_{m,T}\right\rvert_{N^{m+5}(T)} = 1$ and $\left.\chi_{m,T}\right\rvert_{\Omega\backslash N^{m+5}(T)} = 0$, we have used the Cauchy--Schwarz inequality together with the bound $\sum_{T\in \mathcal{T}_H} \vertiii{\mathbf{e}}_{N^{m+5}(T)}^2 \leq C_m \vertiii{\mathbf{e}}^2$, where
\begin{align*}
    C_m := \left\|\sum_{T\in \mathcal{T}_H} \chi_{m,T}\right\|_{L^{\infty}(\Omega)} \leq \frac{\max_{T\in \mathcal{T}_H}\lvert N^{m+5}(T)\rvert}{\min_{T\in\mathcal{T}_H}\lvert T\rvert} \leq C( m^d + 1)
\end{align*}
for some constant $C>0$ depending only on $\cuniformity,\cshape$, and $d$.
\end{proof}

\subsection{Proof of Theorem \ref{main-thm}}\label{sec-proof-main-thm}

We are now ready to prove the main result stated in Theorem \ref{main-thm}. For that, we combine the error estimate for the ideal multiscale method in Theorem \ref{thm-error-ideal-problem} with the decay results from Lemma \ref{lem-expo-deca-loca-erro}.

\begin{proof}[Proof of Theorem \ref{main-thm}]
We observe that the identity \eqref{sub-estimate-2-loc-method} readily follows from the fact that $\div \mathbf{u} = -f$ and $\div \mathbf{u}_{H,m}^{\mathrm{ms}} = -\operatorname{P}_{\hspace{-1pt}H} \hspace{-1pt}f$; see the paragraph below Definition \ref{def-loc-mul-ap}. 

In order to show the estimate \eqref{main-estimate-loc-method}, we start as in the proof of \cite[Theorem 16]{hellman2016multiscale} and introduce the function $\tilde{\mathbf{u}}_{H,m}^{\mathrm{ms}} := (\id - \mathcal{C}^m)\pi_H(\mathbf{u}_{H}^{\mathrm{ms}} ) \in \VHkmsloc$, where $\mathbf{u}_{H}^{\mathrm{ms}} \in \VHkms$ denotes the solution to the ideal method \eqref{ideal-mult-prob}. Noting that 
\begin{align*}
    \div  \tilde{\mathbf{u}}_{H,m}^{\mathrm{ms}} = \div\left(\pi_H(\mathbf{u}_{H}^{\mathrm{ms}} )\right) = P_H(\div \mathbf{u}_{H}^{\mathrm{ms}}) = P_H(-P_H f) = -P_H f = \div  \mathbf{u}_{H,m}^{\mathrm{ms}},
\end{align*}
we can use the Galerkin orthogonality for the exact solution $\mathbf{u}$ and the multiscale approximation $\mathbf{u}_{H,m}^{\mathrm{ms}}$ on $\Hzerodivzero \cap \VHkmsloc$ to find that
\begin{eqnarray*}
a( \mathbf{u} - \mathbf{u}_{H,m}^{\mathrm{ms}}, \mathbf{u} - \mathbf{u}_{H,m}^{\mathrm{ms}} ) &=&
a( \mathbf{u} - \mathbf{u}_{H,m}^{\mathrm{ms}}, \mathbf{u} - \mathbf{u}_{H,m}^{\mathrm{ms}} )
+ a( \mathbf{u} - \mathbf{u}_{H,m}^{\mathrm{ms}}, \mathbf{u}_{H,m}^{\mathrm{ms}} - \tilde{\mathbf{u}}_{H,m}^{\mathrm{ms}} ) \\
&=& a( \mathbf{u} - \mathbf{u}_{H,m}^{\mathrm{ms}}, \mathbf{u} - \tilde{\mathbf{u}}_{H,m}^{\mathrm{ms}} ),
\end{eqnarray*}
and hence, using that $\tilde{\mathbf{u}}_{H,m}^{\mathrm{ms}} - \mathbf{u}_{H}^{\mathrm{ms}} = (\mathcal{C} - \mathcal{C}^m)\pi_H(\mathbf{u}_{H}^{\mathrm{ms}} )$, we obtain
\begin{align}\label{err pf bd1}
\begin{split}
\vertiii{  \mathbf{u} - \mathbf{u}_{H,m}^{\mathrm{ms}} } &\le 
\vertiii{  \mathbf{u} - \tilde{\mathbf{u}}_{H,m}^{\mathrm{ms}} }
\,\,\, \le\,\,\,
\vertiii{  \mathbf{u} - \mathbf{u}_{H}^{\mathrm{ms}} }
+ \vertiii{ (\mathcal{C} - \mathcal{C}^m)\pi_H(\mathbf{u}_{H}^{\mathrm{ms}} )} \\
&\overset{\eqref{main-estimate-ideal-method}}{\lesssim} H \| f - \operatorname{P}_{\hspace{-1pt}H}f \|_{L^2(\Omega)} + \vertiii{ (\mathcal{C} - \mathcal{C}^m)\pi_H(\mathbf{u}_{H}^{\mathrm{ms}})}.
\end{split}
\end{align}
In view of Lemma \ref{lem-expo-deca-loca-erro}, we have that
\begin{align}\label{err pf bd2}
     \vertiii{ (\mathcal{C} - \mathcal{C}^m)\pi_H(\mathbf{u}_{H}^{\mathrm{ms}})}  \lesssim \left(m^{\frac{d}{2}}+1\right)\theta^m \vertiii{\pi_H(\mathbf{u}_{H}^{\mathrm{ms}})}  
     \lesssim m^{\frac{d}{2}}\theta^m \|\mathbf{u}_{H}^{\mathrm{ms}}\|_{\Hdiv},
\end{align}
where the final inequality follows from global stability of $\pi_H$ implied by \eqref{h-div-inte-stab}. In view of the bounds \eqref{err pf bd1} and \eqref{err pf bd2}, and noting that by \eqref{u stabbd} and Theorem \ref{thm-error-ideal-problem} we have that
\begin{align*}
    \|\mathbf{u}_{H}^{\mathrm{ms}}\|_{\Hdiv} \leq \|\mathbf{u}  - \mathbf{u}_{H}^{\mathrm{ms}}\|_{\Hdiv} + \|\mathbf{u} \|_{\Hdiv} \lesssim \|f\|_{L^2(\Omega)},
\end{align*} 
we conclude that
\begin{align*}
    \vertiii{  \mathbf{u} - \mathbf{u}_{H,m}^{\mathrm{ms}} } \lesssim H \| f - \operatorname{P}_{\hspace{-1pt}H}f \|_{L^2(\Omega)} + m^{\frac{d}{2}}\theta^m\|f\|_{L^2(\Omega)}.
\end{align*}
Finally, to prove the remaining bound \eqref{sub-estimate-1-loc-method} we can use the inf-sup stability in Lemma \ref{lemma:inf-sup-VHmsloc} and, with the same arguments as in the proof of Theorem \ref{thm-error-ideal-problem}, we obtain
\begin{align*}
    \|p_{H,m}-p\|_{L^2(\Omega)} \,\,\leq\,\, \|p_{H,m}-\operatorname{P}_{\hspace{-1pt}H}p \|_{L^2(\Omega)} + \|p - \operatorname{P}_{\hspace{-1pt}H} p \|_{L^2(\Omega)} \,\,\lesssim\,\, \vertiii{\mathbf{u} - \mathbf{u}_{H,m}^{\mathrm{ms}}} + \|p - \operatorname{P}_{\hspace{-1pt}H} p\|_{L^2(\Omega)},
\end{align*}
which completes the proof.
\end{proof}

\section{Numerical Experiments}\label{Sec: num exp}
In this section, we present numerical experiments to illustrate the theoretical results. Since the exact solution $(\mathbf{u},p)\in \Hzerodivomega{\Omega}\times L^2_0(\Omega)$ to \eqref{eqn-mixed-form-var} is typically unknown, we define the relative approximation errors for the velocity $\mathbf{u}$ and the pressure $p$ by
\begin{equation*}
\operatorname{error}_{m}(\mathbf{u}):=\frac{\vertiii{\mathbf{u}_{H, m}^{\mathrm{ms}}-\mathbf{u}_h^{\mathrm{ref}}}}{\vertiii{\mathbf{u}_h^{\mathrm{ref}}}}, \qquad \operatorname{error}_{m}(p):=\frac{\|p_{H,m} - p_h^{\mathrm{ref}}\|_{\Ltwo}}{\|p_h^{\mathrm{ref}}\|_{\Ltwo}},
\end{equation*}
where a computed reference solution $(\mathbf{u}_h^{\mathrm{ref}}, p_h^{\mathrm{ref}})$ takes the place of the unknown true solution $(\mathbf{u},p)$. The reference solution is a fine approximation of \eqref{eqn-mixed-form-var} obtained by using lowest-order Raviart--Thomas elements on a fine mesh with mesh size $h\ll H$, which will be specified in each experiment. The code accompanying this manuscript is available at \href{https://github.com/HaoLi-HL/LOD-Hdiv}{https://github.com/HaoLi-HL/LOD-Hdiv}.

\subsection{Experiment 1: Convergence test}\label{sec-example-1}
For the first experiment, we consider the problem \eqref{elliptic-equation-strong-form} on the unit square $\Omega:=(0,1)^2$ in dimension $d=2$. 
To introduce multiscale features, we define the diffusion coefficient $\mathbf{A}$ and the source term $f$ as 
\begin{align*}
  \mathbf{A}(x,y) := \kappa(x,y)\,\mathbf{I}_2,\qquad  f(x, y):=2 \pi^2 \cos (\pi x) \cos (\pi y)\qquad \text{for }\; (x,y)\in \Omega,
\end{align*}
where $\mathbf{I}_2\in \mathbb{R}^{2\times 2}$ denotes the identity matrix and $\kappa:\Omega\rightarrow \mathbb{R}$ denotes the checkerboard coefficient displayed in Figure \ref{rand_coef} that takes the value $1$ in the black squares and $0.001$ in the white squares. The size of each checkerboard square (i.e., the block size) is chosen to be twice the size of the fine mesh, ensuring that the reference solution resolves the coefficients. The magnitude of a typical reference solution is shown in Figure \ref{u-ref-norm}, clearly illustrating the multiscale structure. The reference solution is computed on a uniform fine triangular mesh with mesh size $h=\sqrt{2} \cdot 2^{-7}$. 
\begin{figure}[h!]
\centering
\begin{subfigure}[t]{0.49\textwidth}
\centering
\includegraphics[scale=0.5]{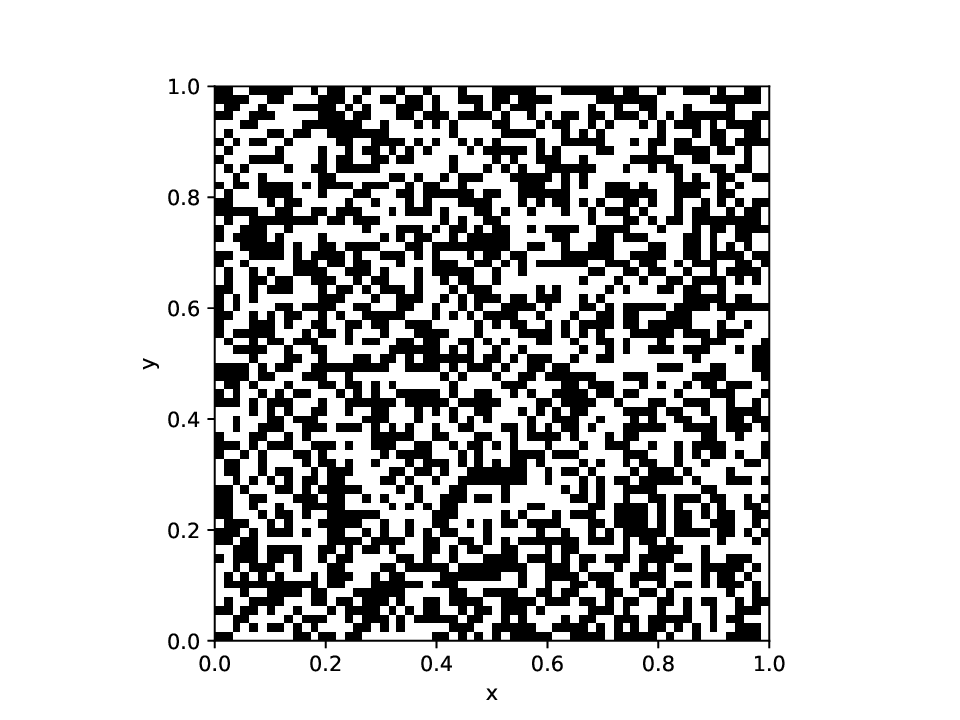}
\caption{Checkerboard coefficient $\kappa:(0,1)^2 \rightarrow \mathbb{R}$ with \\values $1$ (black) and $0.001$ (white)}
\label{rand_coef}
\end{subfigure}
\begin{subfigure}[t]{0.49\textwidth}
\centering
\includegraphics[scale=0.5]{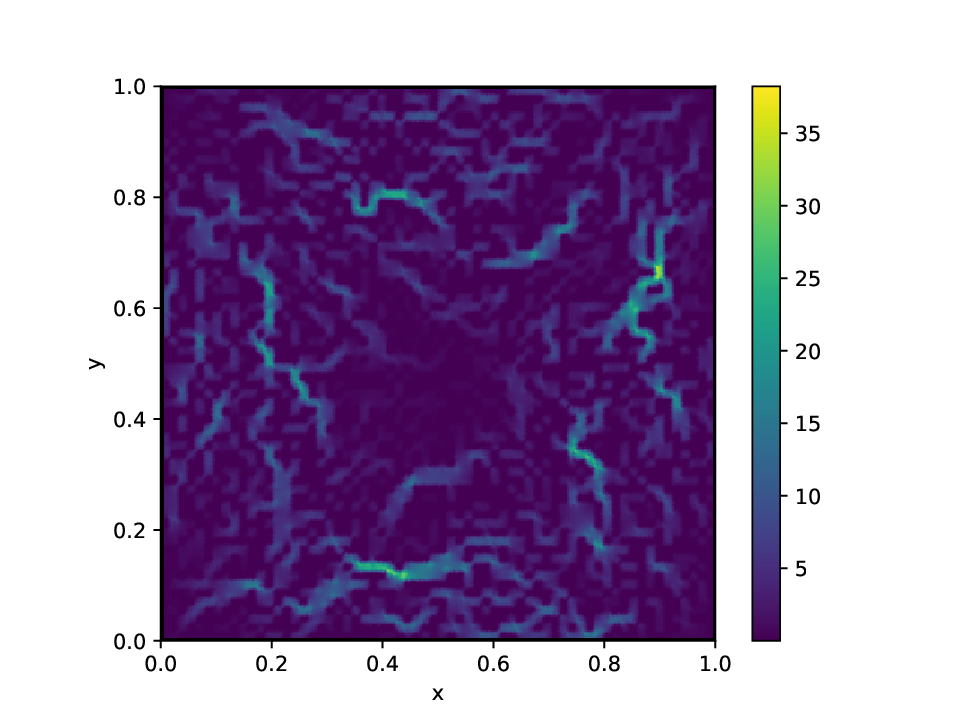}
\caption{Magnitude of the reference solution $\mathbf{u}_h^{\mathrm{ref}}$}
\label{u-ref-norm}
\end{subfigure}
\caption{Plots of the coefficient and the magnitude of the reference flux for Experiment 1.}
\end{figure}

The results are shown in Figure \ref{fig-convergence-rate}. For the approximation of the velocity in the energy norm, we observe second-order convergence for $k=0$ and third-order convergence for $k=1$. For the approximation of the pressure in the $L^2(\Omega)$-norm, we observe first-order convergence for both $k=0$ and $k = 1$. These observations are consistent with Theorem \ref{main-thm}. Also note that the number of layers required to observe the quadratic convergence for the velocity is larger than the number of layers required for linear convergence in the pressure. This is consistent with the theory since quadratic convergence constrains $m$ by $m^{d / 2} \, \theta^{m}\lesssim H^2$, whereas linear convergence only requires $m^{d / 2} \, \theta^{m}\lesssim H$.
\begin{figure}[h!]
\centering
\begin{subfigure}{0.49\textwidth}
\centering
\includegraphics[scale=0.3]{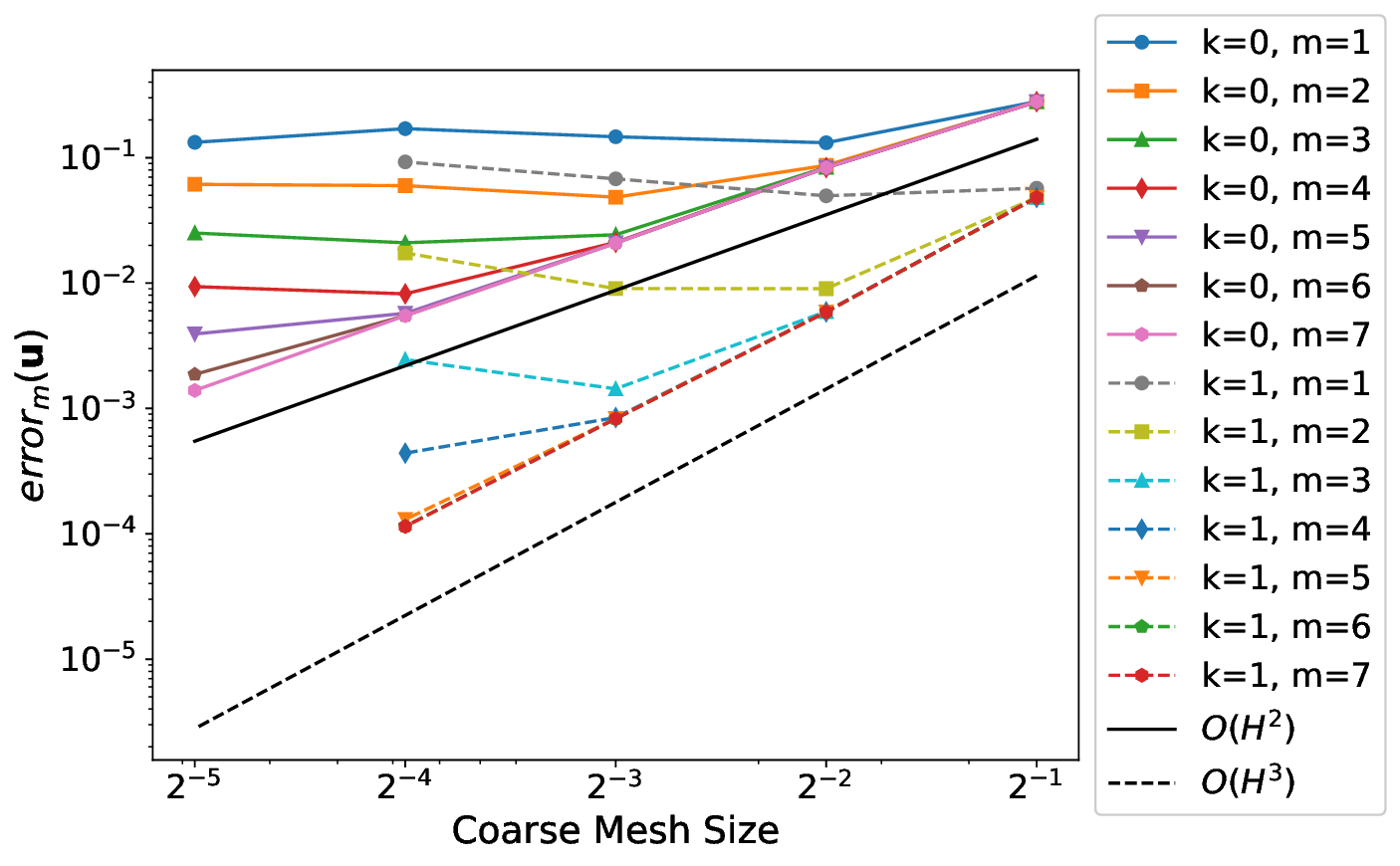}
\caption{Convergence test for flux $\mathbf{u}_{H, m}^{\mathrm{ms}}$}
\end{subfigure}
\begin{subfigure}{0.49\textwidth}
\centering
\includegraphics[scale=0.3]{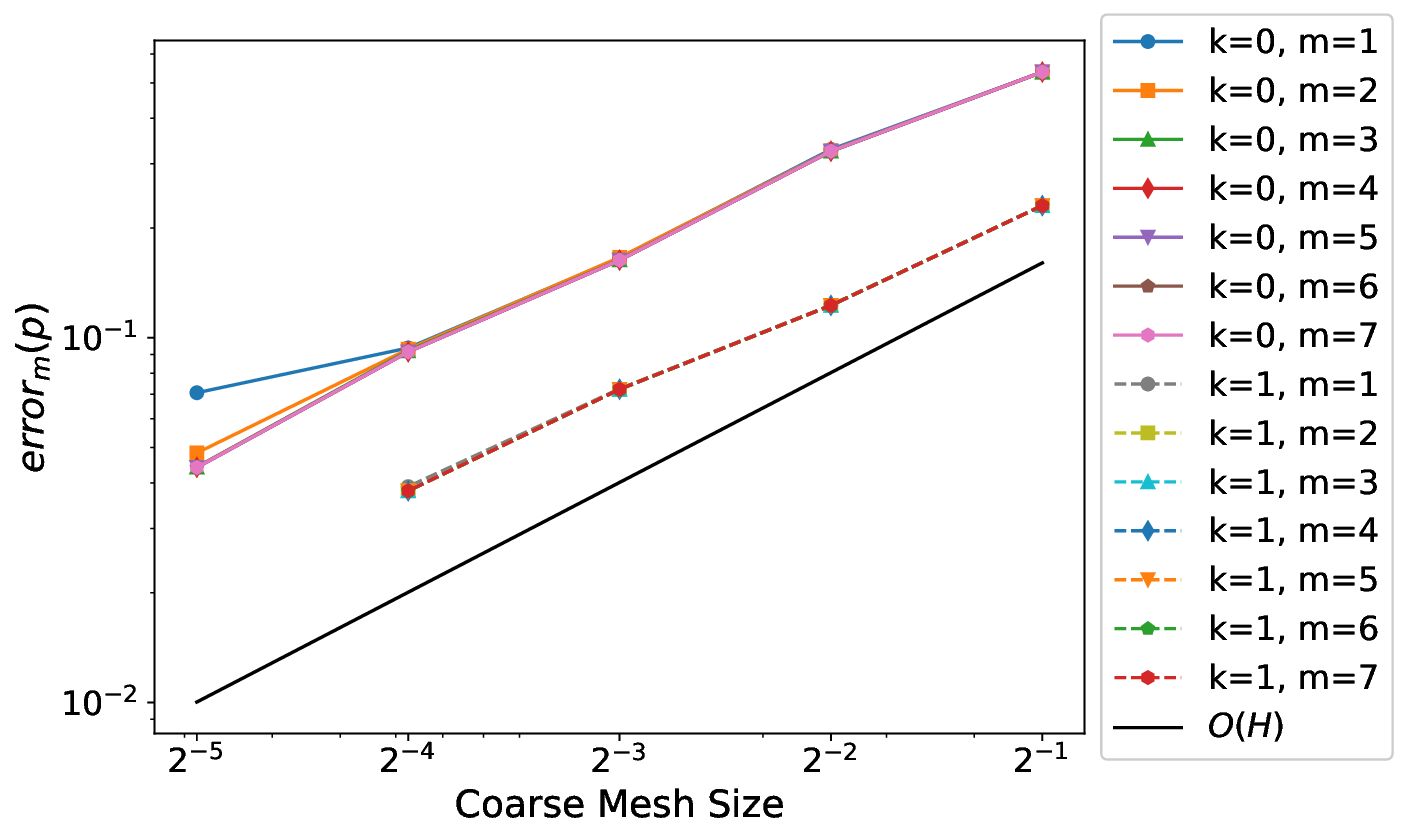}
\caption{Convergence test for pressure $p_{H,m}$}
\end{subfigure}
\caption{Convergence Test. Relative errors for the LOD approximations of Experiment 1.}
\label{fig-convergence-rate}
\end{figure}

\subsection{Experiment 2: SPE10-85}

For the second experiment, we consider the problem \eqref{elliptic-equation-strong-form} on the rectangle $\Omega := (0,\frac{6}{5})\times (0,\frac{11}{5})$ in dimension $d=2$. We use the 85th permeability layer from Model 2 of the SPE10 benchmark data set \cite{christie2001tenth} of the Society of Petroleum Engineers (SPE) as a highly heterogeneous realistic test coefficient $\tilde{\kappa}:\Omega\rightarrow\mathbb{R}$. The permeability coefficient $\mathbf{A} := \tilde{\kappa}\mathbf{I}_2$ is provided on a uniform $60 \times 220$ rectangular grid and is visualized in Figure~\ref{permeability-field-kx}. 

\begin{figure}
\centering
\begin{subfigure}[t]{0.49\textwidth}
\centering
\includegraphics[trim={0 4.5cm 0 3cm},clip,scale=0.4]{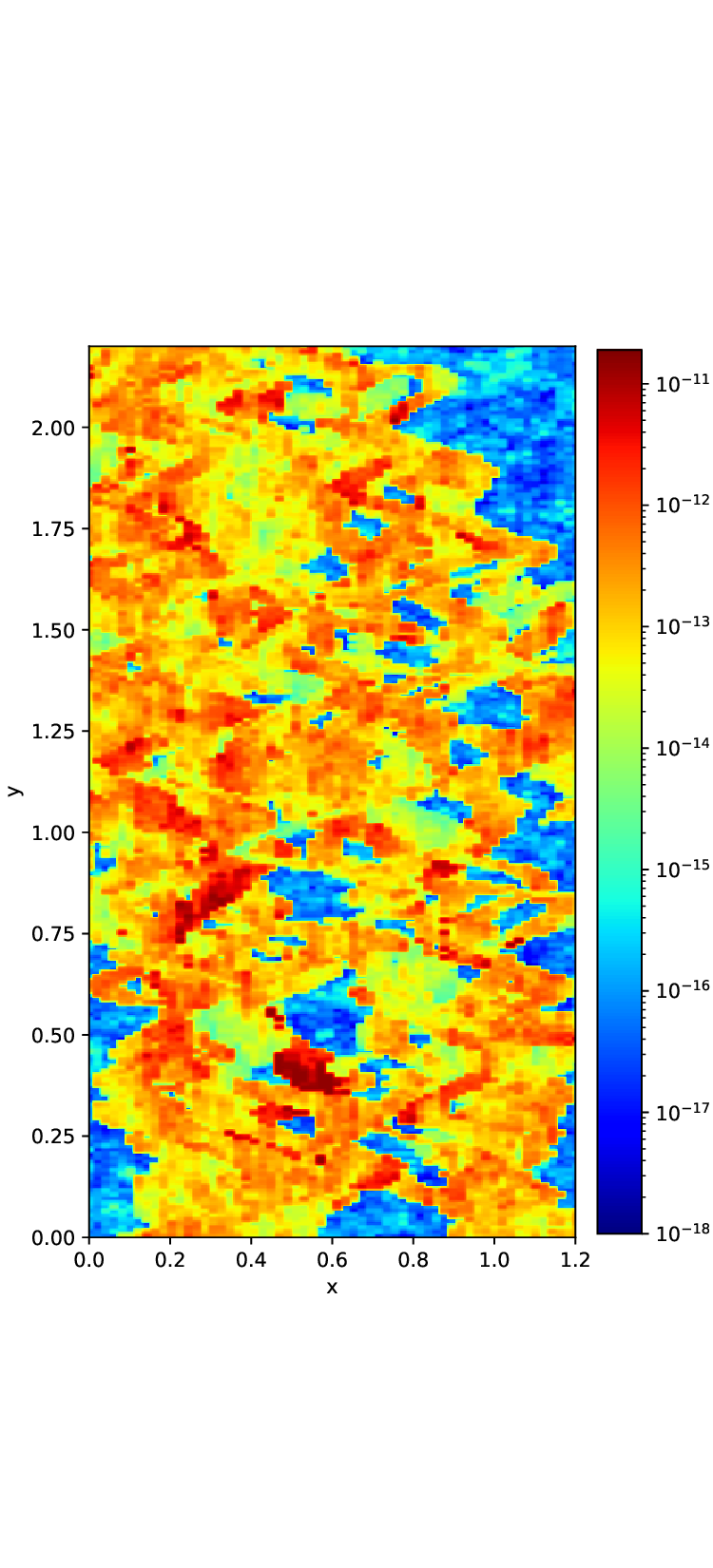}
\caption{SPE10-85 coefficient $\tilde{\kappa}:(0,\frac{6}{5})\times (0,\frac{11}{5})\rightarrow \mathbb{R}$}
\label{permeability-field-kx}
\end{subfigure}
\begin{subfigure}[t]{0.49\textwidth}
\centering
\includegraphics[trim={0 2cm 0 3cm},clip,scale=0.35]{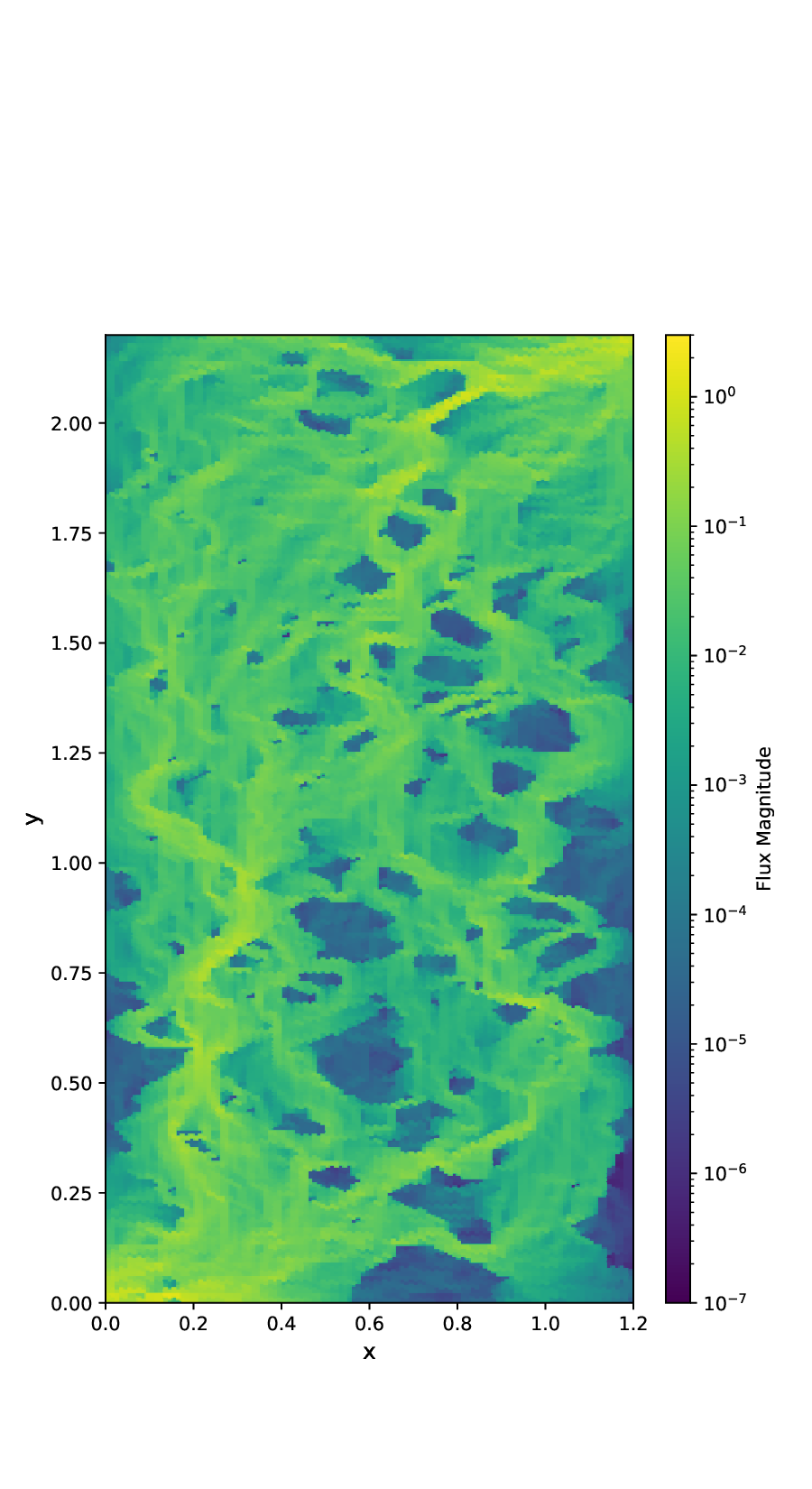}
\caption{Magnitude of the reference solution $\mathbf{u}_h^{\mathrm{ref}}$}
\end{subfigure}
\caption{Plots of the coefficient and the magnitude of the reference flux for Experiment 2.}
\end{figure}

The fine mesh is generated by subdividing each rectangle of the uniform $60 \times 220$ quadrilateral mesh, which is aligned with the permeability data, into two triangles. Similarly, the coarse mesh is constructed by dividing each rectangle of a uniform $6 \times 22$ quadrilateral mesh into two triangles. The resulting fine and coarse meshes are conforming, with each coarse element further partitioned into $10 \times 10$ fine elements. The quasi-singular source term $f$ is set to $100$ in the lower-left element and $-100$ in the upper-right element of the $60 \times 220$ quadrilateral mesh, representing a discretized Dirac delta function to model production wells in hydrological simulations. Note that a delta function source formally belongs to $W^{-s, 2}(\Omega)$ for $s>\frac{d}{2}$ rather than $L^{2}(\Omega)$, which is the setting of this work. 
Although the discrete load $f_h$ used in this experiment is piecewise constant on the
fine grid and therefore belongs to the discrete $L^{2}$-space, it represents a
discretization of a Dirac-type well source and its effective support is far below the resolution of the coarse mesh. From the coarse-scale perspective, the load is thus highly localized (quasi-singular), and the assumptions underlying the standard LOD construction for
$L^{2}$-right-hand sides do not apply (in fact, the error contribution $\| f_h -\operatorname{P}_H \hspace{-2pt}f_h\|_{\Ltwo}$ becomes large in this case). 
Following \cite{Mal11}, and as in \cite{hellman2016multiscale}, we therefore employ localized
source correctors to incorporate the fine-scale influence of the unresolved load
into the multiscale space. These localized source corrections are computed on $\ell$-coarse-layer patches with $\ell=m+1$, where $m$ is the number of coarse layers used in the multiscale correction. See \cite{hellman2016multiscale} for further details on source correction procedures. The flux solutions are plotted in Figure \ref{fig-spe-10}. A reference solution $\mathbf{u}_h^{\mathrm{ref}}$ was computed on the fine mesh. The relative errors $\operatorname{error}_{m}(\mathbf{u})$ for $m=2,3,4$ are equal to 2.42e-2, 1.07e-2, and 1.29e-3, respectively. The corresponding numerical solutions are shown in Figures~\labelcref{spe10-m2-l3,spe10-m3-l4,spe10-m4-l5} and show that LOD approximations capture the correct fine-scale behavior with very high accuracy.

\begin{figure}[h!]
\centering
\begin{subfigure}{0.245\textwidth}
\centering
\includegraphics[width=\linewidth,trim={0 2cm 0.7cm 3cm},clip]{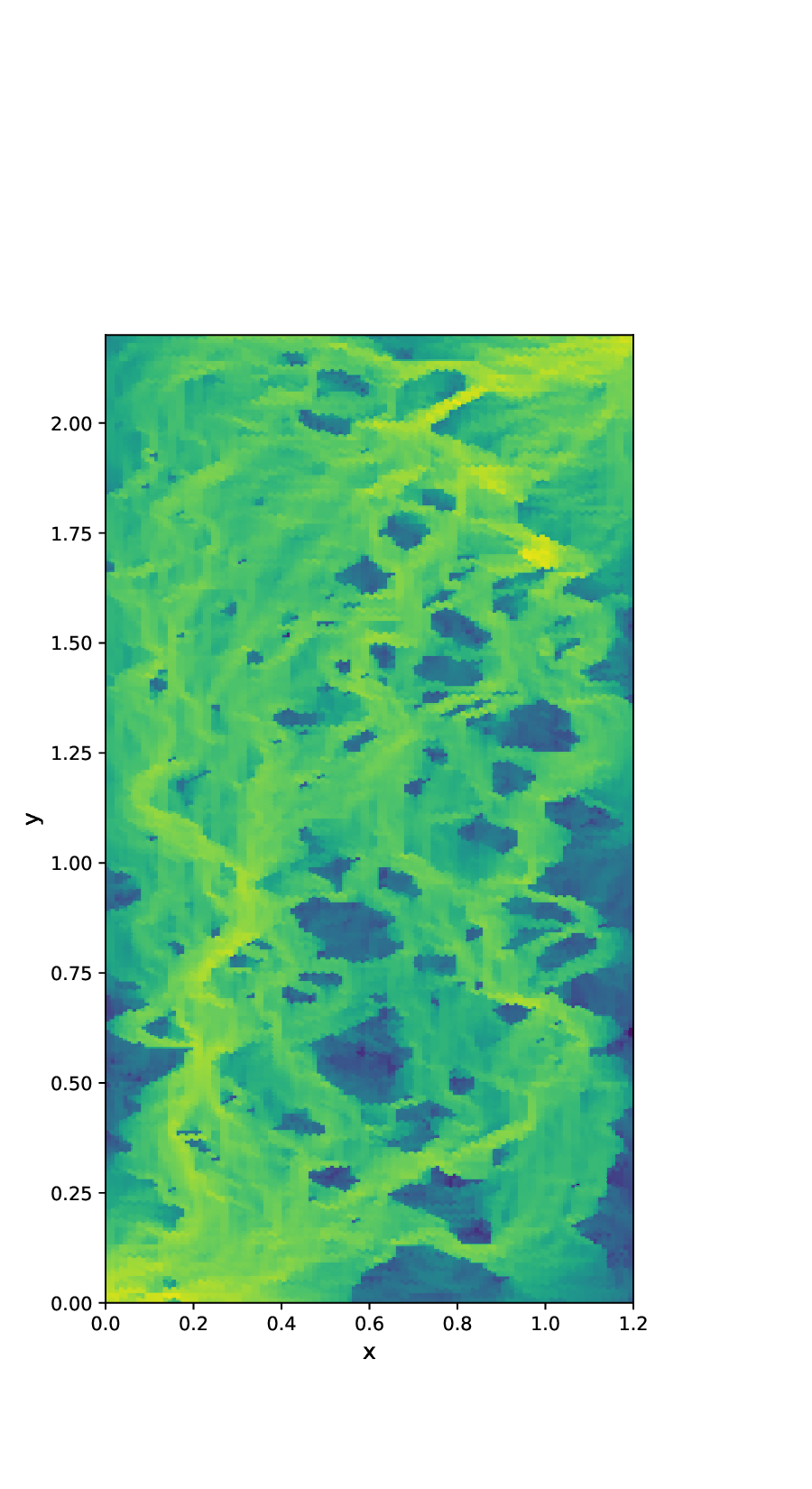}
\caption{$m=2$, $\ell =3$}
\label{spe10-m2-l3}
\end{subfigure}
\begin{subfigure}{0.245\textwidth}
\centering
\includegraphics[width=\linewidth,trim={0 2cm 0.7cm 3cm},clip]{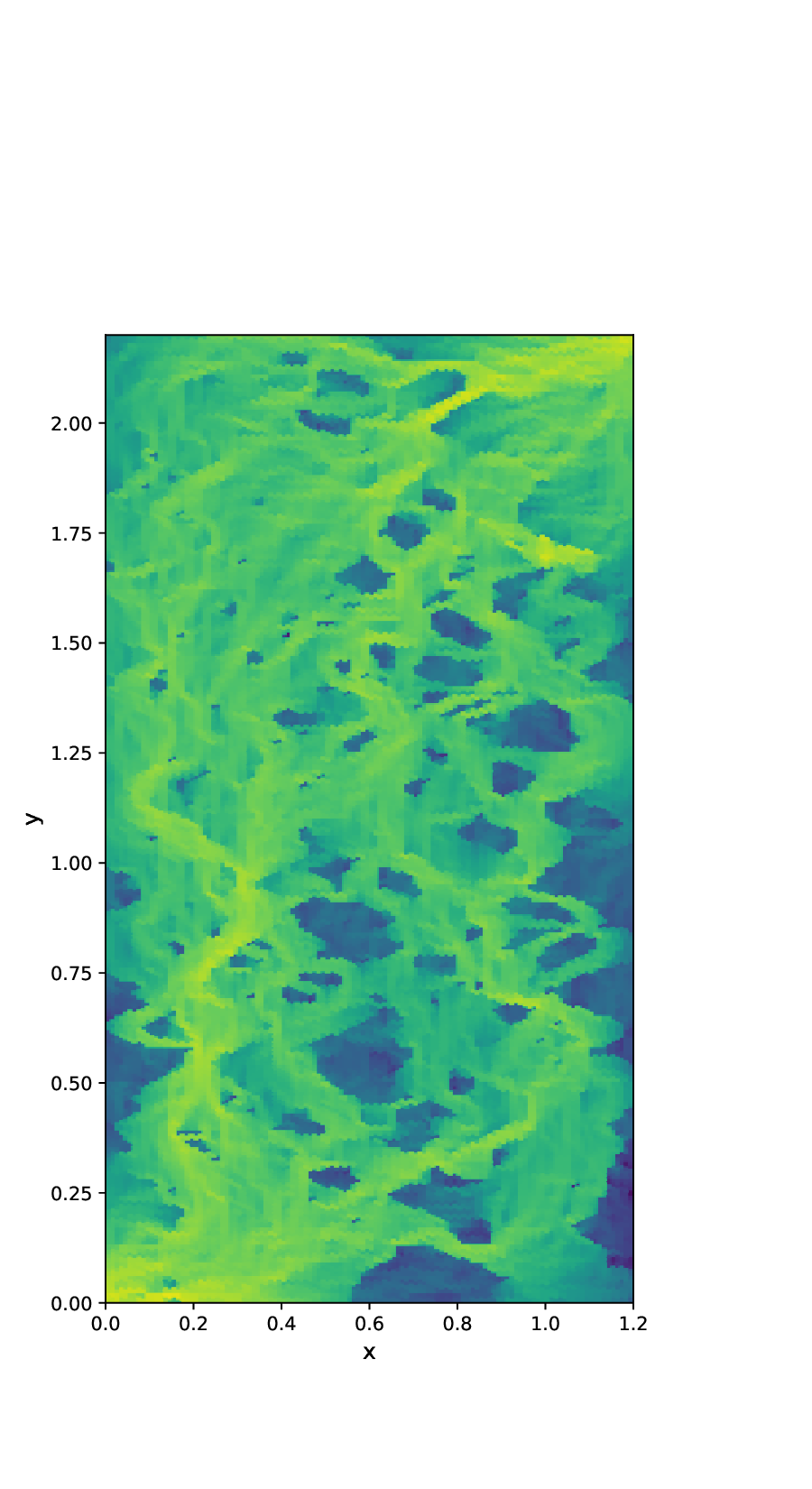}
\caption{$m=3$, $\ell =4$}
\label{spe10-m3-l4}
\end{subfigure}
\begin{subfigure}{0.245\textwidth}
\centering
\includegraphics[width=\linewidth,trim={0 2cm 0.7cm 3cm},clip]{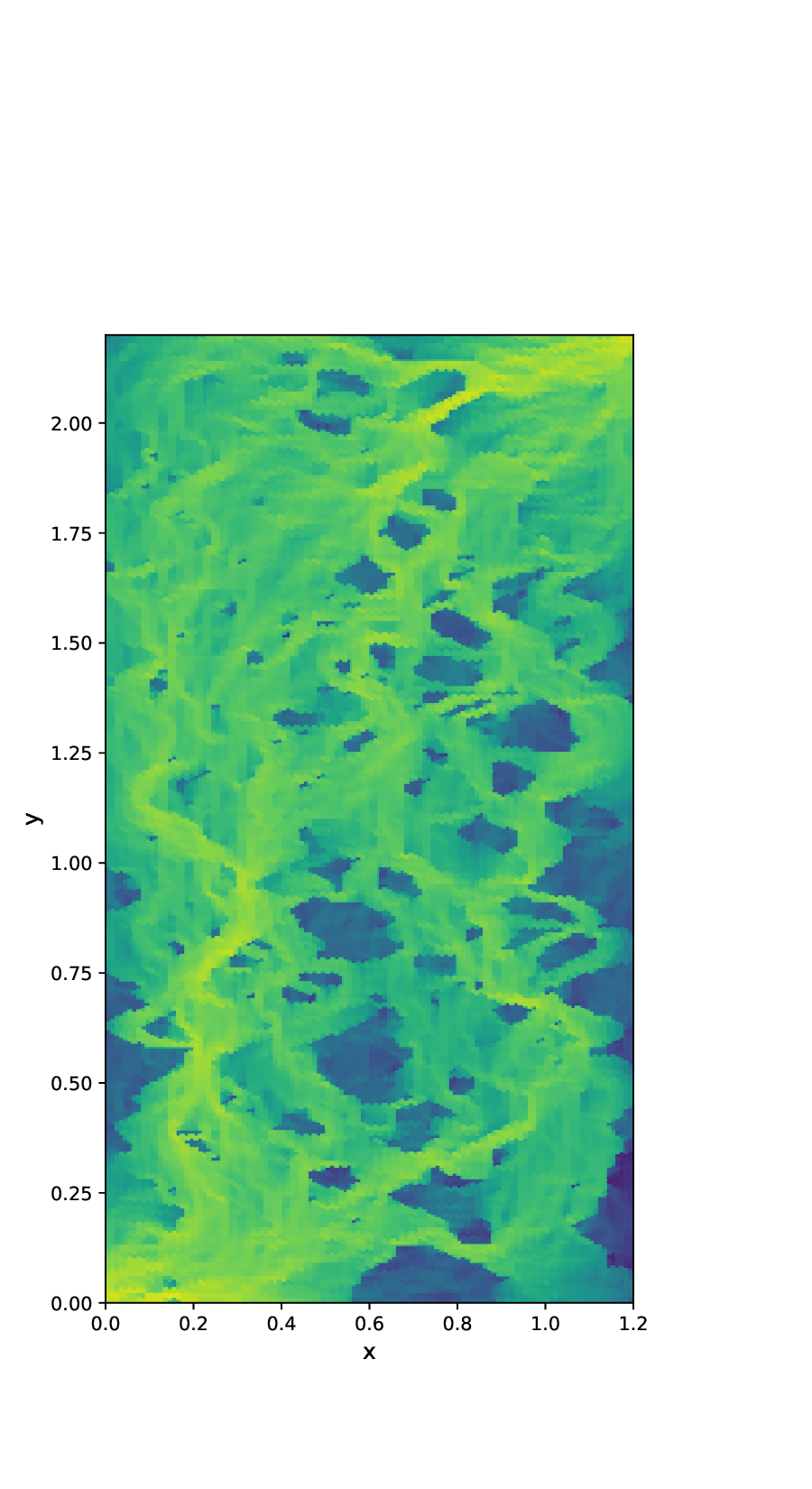}
\caption{$m=4$, $\ell =5$}
\label{spe10-m4-l5}
\end{subfigure}
\begin{subfigure}{0.245\textwidth}
\centering
\includegraphics[width=\linewidth,trim={0 2cm 0.7cm 3cm},clip]{figs/anisotropic_ref.eps}
\caption{Reference solution}
\label{spe10-ref-sol}
\end{subfigure}
\caption{SPE10-85 test. Figure~\ref{spe10-ref-sol} shows the magnitude of the reference flux solution.  Figures~\labelcref{spe10-m2-l3,spe10-m3-l4,spe10-m4-l5} display the magnitudes of the multiscale flux solutions for $m=2,3,4$ with $\ell=m+1$, respectively.}
\label{fig-spe-10}
\end{figure}

\subsection{Experiment 3: Comparison test}\label{sec-example-3}
In this experiment, we compare the LOD method using the stable interpolator presented in this paper with the method employing the nodal Raviart--Thomas interpolator from \cite{hellman2016multiscale} to demonstrate the stability of the new method.

We consider the unit square $\Omega := (0,1)^2$ with the diffusion coefficient defined as
$$
\mathbf{A}(x, y):= \begin{cases}\exp (1) & \text { if } y<\frac{1}{2} \text { or } (x, y) \in\left[\frac{1}{2}-2^{-5}, \frac{1}{2}+2^{-5}\right] \times\left[\frac{1}{2}, \frac{1}{2}+2^{-5}\right], \\ \frac{1}{10} & \text { otherwise}, \end{cases}
$$
and the source term chosen as
$$
f(x, y):= \begin{cases}-1 & \text { if } y<\frac{1}{2}, \\ 1 & \text { otherwise }\end{cases}
$$
for $(x,y)\in\Omega$. We fix $H = 2^{-2}$ so that $f$ is resolved on the coarse scale. Consequently, $f \in Q_H^0$, and all errors stem from localization (see Theorem \ref{main-thm}). We set the resolution $h$ of the fine mesh to $h = 2^{-i}$ for $i = 5,6,7,8$. With $m = 2$, we compute the localized multiscale solution $\mathbf{u}_{H, m}^{\mathrm{ms}}(h)$ and the reference solution $\mathbf{u}_h$ for these values of $h$.
\begin{figure}[h!]
\centering
\includegraphics[width=0.7\linewidth]{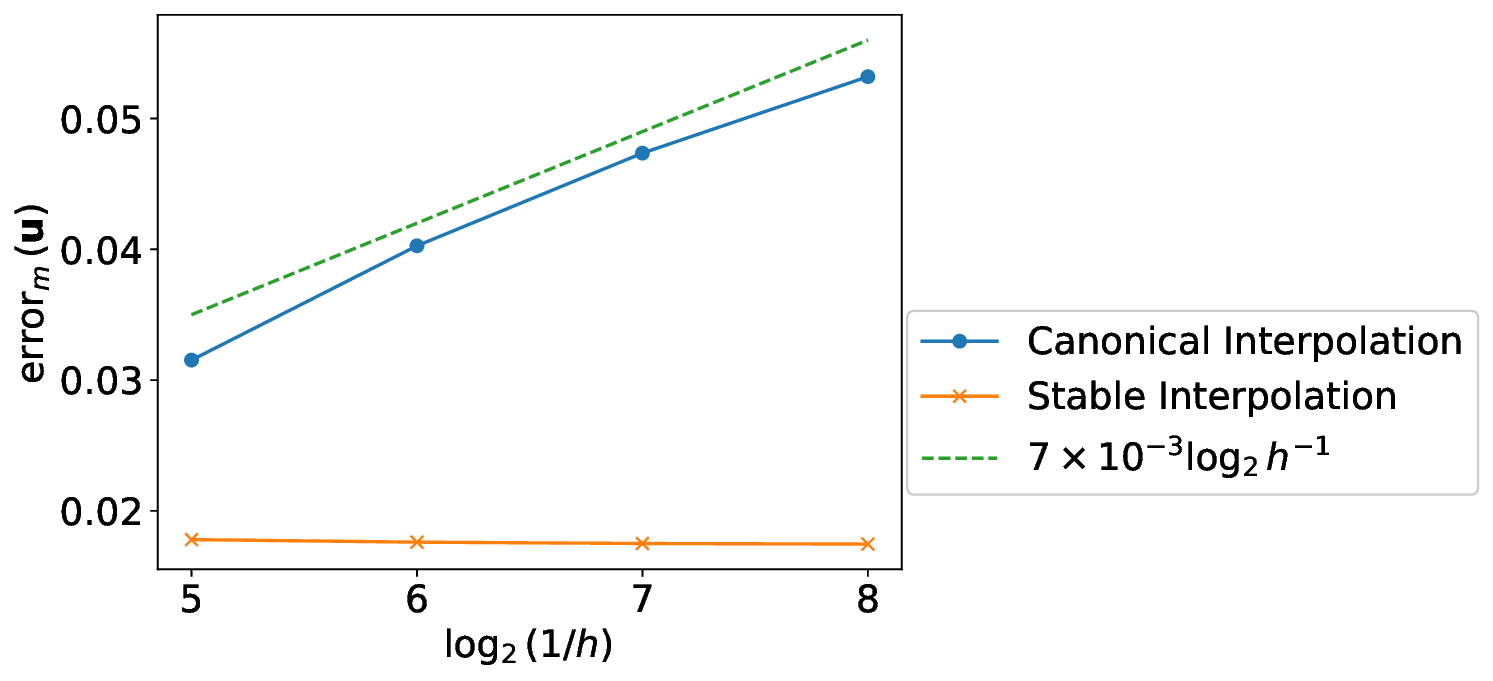}
\caption{Energy norm of the localization errors
of methods with different interpolators as $h$ decreases.}
\label{fig-instability-comparison}
\end{figure}
The energy norm of the error is plotted in Figure~\ref{fig-instability-comparison}. For this specific problem and range of $h$, the error of the method in \cite{hellman2016multiscale} increases as $h$ decreases, at a rate of $\log(h^{-1})$, as predicted by the theory. On the contrary, the error of the new method remains stable and there is no logarithmic pollution. This shows the computational advantage of the new method.

\subsection{Experiment 4: 3D convergence test}\label{sec-example-4}
We consider the problem \eqref{elliptic-equation-strong-form} on the unit cube $\Omega := (0,1)^3$ in dimension $d=3$. To introduce multiscale features, we define the diffusion coefficient $\mathbf{A}$ and the source term $f$ as
$$\mathbf{A}(x, y, z) := \kappa(x, y, z)\, \mathbf{I}_3, \qquad f(x, y, z) := 3 \pi^2 \cos(\pi x) \cos(\pi y) \cos(\pi z) \quad \text{for } (x, y, z) \in \Omega,$$
where $\mathbf{I}_3 \in \mathbb{R}^{3 \times 3}$ denotes the identity matrix, and $\kappa: \Omega \to \mathbb{R}$ is the checkerboard coefficient displayed in Figure \ref{rand_coef_3D}, taking the value $1$ in the black squares and $0.01$ in the white squares. The size of each checkerboard square (i.e., the block size) is chosen to be twice the fine mesh size, ensuring that the reference solution resolves the coefficients. The reference solution is computed on a uniform fine tetrahedral mesh with mesh size $h = \sqrt{3} \cdot 2^{-4}$. The LOD approximations are computed for $H = \sqrt{3} \cdot 2^{-j}$, $j=1, 2, 3$, with $m$ taken as 2, 3, 3, respectively. The results are shown in Figure \ref{fig-convergence-rate-3d}. We observe second-order convergence for the approximation of the velocity in the energy norm and first-order convergence for the approximation of the pressure in the $L^2(\Omega)$-norm, as predicted by Theorem \ref{main-thm}.

\begin{figure}[h!]
\centering
\includegraphics[scale=0.45]{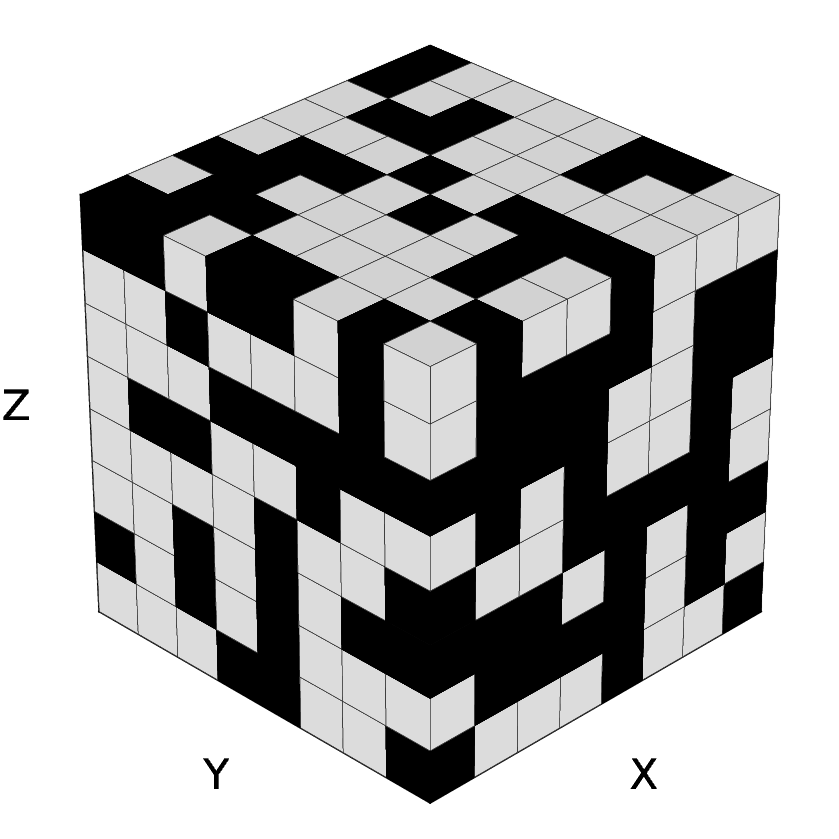}
\caption{Checkerboard coefficient $\kappa:(0,1)^3 \rightarrow \mathbb{R}$ with values $1$ (black) and $0.01$ (white)}
\label{rand_coef_3D}
\end{figure}

\begin{figure}[h!]
\centering
\begin{subfigure}{0.49\textwidth}
\centering
\includegraphics[scale=0.4]{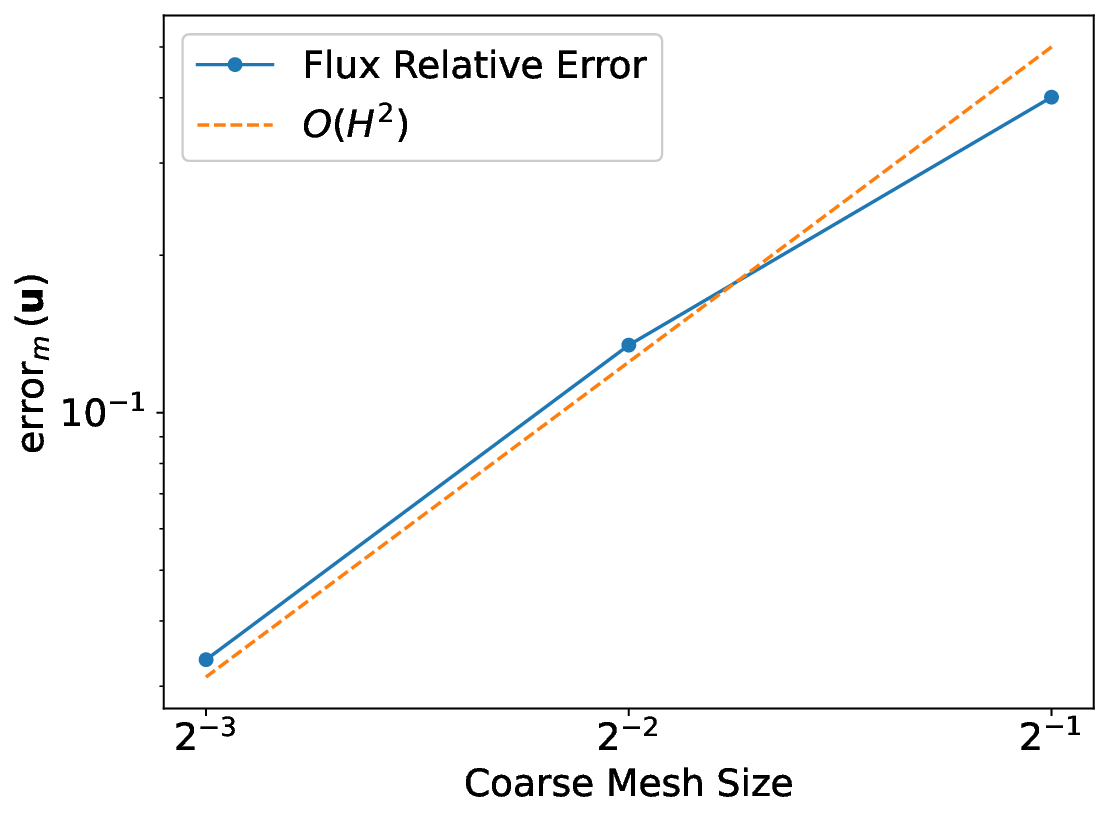}
\caption{3D convergence test for flux $\mathbf{u}_{H, m}^{\mathrm{ms}}$}
\end{subfigure}
\begin{subfigure}{0.49\textwidth}
\centering
\includegraphics[scale=0.4]{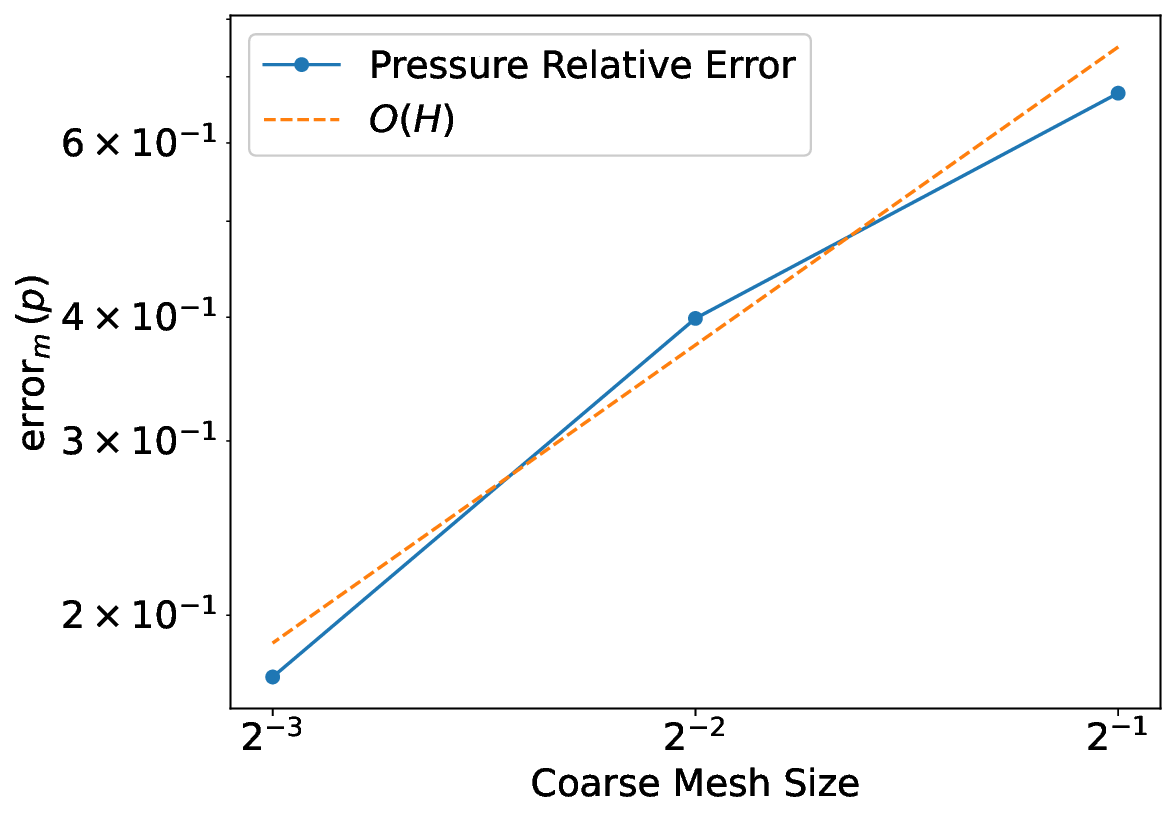}
\caption{3D convergence test for pressure $p_{H,m}$}
\end{subfigure}
\caption{3D Convergence Test. Relative errors for the LOD approximations of Experiment 4.}
\label{fig-convergence-rate-3d}
\end{figure}

\section*{Acknowledgement}
Patrick Henning acknowledges the support by the Deutsche Forschungsgemeinschaft (DFG, German Research Foundation) through the project grant 574914916. Hao Li was supported by internal grants from The Hong Kong Polytechnic University (Project IDs: P0038443 and P0058870).

\bibliographystyle{abbrv}
\bibliography{refs}

\begin{appendix}
\section{Inf-sup stability in classical Raviart--Thomas spaces}\label{appendix:A}

In the following, we give a short proof of the classical inf-sup stability in \eqref{inf-sup-classical-spaces}. Let $q_H\in Q_H^k \cap L^2_0(\Omega)\backslash\{0\}$ and let $\varphi_{q_H}\in H^1(\Omega)\cap L^2_0(\Omega)$ be the unique solution to the Neumann problem
\begin{align*}
    (\nabla \varphi_{q_H},\nabla \psi)_{L^2(\Omega)} = -(q_H,\psi)_{L^2(\Omega)}\qquad\mbox{for all } \psi\in H^1(\Omega).
\end{align*}
Then, $\mathbf{\tilde v}_{q_H} := \nabla \varphi_{q_H}\in \Hzerodiv\backslash\{\mathbf{0}\}$ with $\div(\mathbf{\tilde v}_{q_H})=q_H$. Setting $\mathbf{v}_{q_H} := \pi_H \mathbf{\tilde v}_{q_H}\in V_H^k\backslash\{\mathbf{0}\}$ for any stable and commuting quasi-interpolation operator $\pi_H : \Hzerodiv \rightarrow V_H^k$ (in the sense of Assumption \ref{assu-stab-inte}), we have that 
\begin{align*}
    \div(\mathbf{v}_{q_H}) = \div(\pi_H \mathbf{\tilde v}_{q_H}) = \PH(\div(\mathbf{\tilde v}_{q_H})) = \PH q_H = q_H,
\end{align*}
and hence,
\begin{align*}
    \sup _{\mathbf{v}_H \in V_H^k\backslash\{\mathbf{0}\} } \frac{b(\mathbf{v}_H, q_H)}{\|q_H\|_{\Ltwo}\|\mathbf{v}_H\|_{\Hdiv}} \geq \frac{b(\mathbf{v}_{q_H}, q_H)}{\|q_H\|_{\Ltwo}\|\mathbf{v}_{q_H}\|_{\Hdiv}} = \frac{\|q_H\|_{L^2(\Omega)}}{\|\mathbf{v}_{q_H}\|_{\Hdiv}}.
\end{align*}
Next, we note that $\|\div(\mathbf{v}_{q_H})\|_{L^2(\Omega)} = \|q_H\|_{L^2(\Omega)}$ and, using global stability of $\pi_H$ implied by \eqref{h-div-inte-stab}, we find that there exists a constant $C_{\mathrm{\pi_H,stab}}>0$ such that
\begin{align*}
    \|\mathbf{v}_{q_H}\|_{L^2(\Omega)}^2 = \|\pi_H \mathbf{\tilde v}_{q_H}\|_{L^2(\Omega)}^2 &\leq C_{\mathrm{\pi_H,stab}}\left( \|\mathbf{\tilde v}_{q_H}\|_{L^2(\Omega)}^2 + \|\div(\mathbf{\tilde v}_{q_H})\|_{L^2(\Omega)}^2\right)\\ &=C_{\mathrm{\pi_H,stab}}\left( \frac{\lvert(q_H, \varphi_{q_H})_{L^2(\Omega)}\rvert^2}{\|\mathbf{\tilde v}_{q_H}\|_{L^2(\Omega)}^2} + \|q_H\|_{L^2(\Omega)}^2 \right)\\ &\leq C_{\mathrm{\pi_H,stab}}(1+C_{\Omega}^2) \|q_H\|_{L^2(\Omega)}^2,
\end{align*}
where $C_{\Omega}>0$ is the Poincar\'e--Wirtinger constant for the domain $\Omega$ from \eqref{P-Wirtinger}. It follows that 
\begin{align*}
   \sup _{\mathbf{v}_H \in V_H^k\backslash\{\mathbf{0}\} } \frac{b(\mathbf{v}_H, q_H)}{\|q_H\|_{\Ltwo}\|\mathbf{v}_H\|_{\Hdiv}} \geq \left( 1+C_{\mathrm{\pi_H,stab}}(1+C_{\Omega}^2)\right)^{-\frac{1}{2}} =: \rho.
\end{align*}
This proves \eqref{inf-sup-classical-spaces}.

\section{Exponential decay of element correctors for $d=2$}
\label{appendix:section:decay-2D}

In the case $d=3$, the crucial result in the proof of the exponential decay is Lemma \ref{lem-stable-decomposition}, which allows us to express the divergence-free interpolation error $\mathbf{v}-\pi_H \mathbf{v} \in \Wdivzero$ as the curl of some $\mathbf{q} \in \Hzerocurl$ together with corresponding stability estimates. 
In the case $d=2$, a similar decomposition of $\mathbf{v}-\pi_H \mathbf{v} \in \Wdivzero$ holds using the 2D equivalent of the curl operator. However, the overall strategy needs further modifications since we can no longer rely on the Sch\"{o}berl splitting \cite{schoberl2008posteriori} which is only proved for $d=3$.

Before we start, we need to introduce the corresponding curl definitions for $d=2$. For $v \in H^1(\Omega)$ and $\mathbf{v} \in H^1(\Omega,\mathbb{R}^2)$, we recall the respective 2D curl-operators as
\begin{align*}
\curlbf \hspace{1pt}v \,:=\, 
\left(\begin{matrix} 
\,\,\,\,\,\partial_{x_2} v \\
- \partial_{x_1} v
\end{matrix}\right)
\qquad
\mbox{and}
\qquad
\curl \hspace{1pt}\mathbf{v}
 \,:=\, \partial_{x_1} \mathbf{v}_2 - \partial_{x_2} \mathbf{v}_1. 
\end{align*}
The corresponding function spaces are given by
\begin{align*}
H(\curlbf) \, &:=\, \{ \, v \in L^2(\Omega) \, : \, \curlbf v \in L^2(\Omega,\mathbb{R}^2) \, \},
\quad\mbox{and} \\
\mathbf{H}(\curl) \, &:=\, \{ \, \mathbf{v} \in L^2(\Omega,\mathbb{R}^2) \, : \, \curl \mathbf{v} \in L^2(\Omega) \, \}.
\end{align*}
Furthermore, it holds
\begin{align}
\label{inclusion-curl-2D}
\curlbf H^1_0(\Omega) \, \subset \, \Hzerodivzero. 
\end{align}
Recall here that the normal trace of $\curlbf v$ for some function $v\in H^1(\Omega)$ is equal to the tangential trace of $\nabla v$ on $\partial \Omega$.

We will use the following representation of $\Hzerodivzero$-functions that can be found in \cite[Theorem 3.1 and Corollary 3.1.]{girault2012finite}.
\begin{lemma}
\label{lemma:decomposition:2d}
Let $\Omega \subset \mathbb{R}^2$ be a simply-connected bounded Lipschitz domain. Then, for any $\mathbf{w} \in \Hzerodivzero$, there exists a stream function $q \in H^1_0(\Omega)$ such that
\begin{align*}
\mathbf{w} = \curlbf\hspace{1pt} q.
\end{align*}
The stream function can be characterized as the unique solution $q\in H^1_0(\Omega)$ to the problem
\begin{align*}
(\curlbf q , \curlbf v )_{L^2(\Omega)} \,\,=\,\, 
(\mathbf{w} , \curlbf v )_{L^2(\Omega)}
\qquad \mbox{for all } v\in H^1_0(\Omega),
\end{align*}
or equivalently, $- \Delta q = \curl \mathbf{w}$ weakly in $\Omega$. Further, by construction, $|\mathbf{w}(x)| = |\nabla q(x)|$ for a.e. $x\in \Omega$.
\end{lemma}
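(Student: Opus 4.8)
\textbf{Proof proposal for Lemma \ref{lemma:decomposition:2d}.} The plan is to mimic, in two dimensions, the classical stream-function argument, exploiting that on a simply-connected domain a divergence-free vector field is a (rotated) gradient. First I would introduce the auxiliary elliptic problem: find $q \in H^1_0(\Omega)$ such that $(\nabla q, \nabla v)_{L^2(\Omega)} = (\mathbf{w}, \curlbf\hspace{1pt} v)_{L^2(\Omega)}$ for all $v \in H^1_0(\Omega)$, noting that the right-hand side defines a bounded linear functional on $H^1_0(\Omega)$ since $\|\curlbf\hspace{1pt} v\|_{L^2(\Omega)} = \|\nabla v\|_{L^2(\Omega)}$, so the Lax--Milgram theorem (equivalently, the Riesz representation theorem, using the Poincar\'e inequality on $H^1_0(\Omega)$) gives a unique solution. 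Since $(\nabla q, \nabla v)_{L^2(\Omega)} = (\curlbf\hspace{1pt} q, \curlbf\hspace{1pt} v)_{L^2(\Omega)}$ pointwise a.e. (both equal $\partial_{x_1}q\,\partial_{x_1}v + \partial_{x_2}q\,\partial_{x_2}v$), this is exactly the variational characterization in the statement, and testing with $v\in C_c^\infty(\Omega)$ shows $-\Delta q = \curl \mathbf{w}$ in the distributional sense.

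Next I would show $\mathbf{w} = \curlbf\hspace{1pt} q$. Set $\mathbf{z} := \mathbf{w} - \curlbf\hspace{1pt} q \in L^2(\Omega,\mathbb{R}^2)$. From the variational identity, $(\mathbf{z}, \curlbf\hspace{1pt} v)_{L^2(\Omega)} = 0$ for all $v \in H^1_0(\Omega)$; equivalently $\curl \mathbf{z} = 0$ in the sense of distributions and $\mathbf{z}$ has vanishing tangential trace on $\partial\Omega$. On a simply-connected Lipschitz domain, a curl-free $L^2$ field is a gradient: there exists $\phi \in H^1(\Omega)$ with $\mathbf{z} = \nabla \phi$; this is the 2D instance of the de Rham / Poincar\'e lemma in $L^2$ (see, e.g., \cite{girault2012finite}). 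On the other hand, $\div \mathbf{z} = \div \mathbf{w} - \div \curlbf\hspace{1pt} q = 0 - 0 = 0$ since $\mathbf{w}\in\Hzerodivzero$ and $\div\circ\curlbf = 0$, while the normal trace of $\mathbf{z}$ on $\partial\Omega$ vanishes (that of $\mathbf{w}$ vanishes by assumption, that of $\curlbf\hspace{1pt} q$ equals the tangential trace of $\nabla q$, which vanishes because $q \in H^1_0(\Omega)$). Hence $\phi$ solves the homogeneous Neumann problem $\Delta \phi = 0$, $\partial_n \phi = 0$, so $\nabla\phi = 0$, i.e.\ $\mathbf{z} = 0$ and $\mathbf{w} = \curlbf\hspace{1pt} q$. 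Finally, $|\mathbf{w}(x)| = |\curlbf\hspace{1pt} q(x)| = |(\partial_{x_2}q, -\partial_{x_1}q)(x)| = |\nabla q(x)|$ for a.e.\ $x$, which is the pointwise identity claimed.

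The main obstacle — which is really the only non-elementary ingredient — is the de Rham-type step: asserting that an $L^2$ vector field with vanishing distributional curl on a simply-connected Lipschitz domain is a gradient of an $H^1$ function. This is where simple-connectedness is genuinely used, and it is the reason the hypothesis appears in the statement. I would handle it by quoting the corresponding result from \cite{girault2012finite} (the same reference cited in the lemma) rather than reproving it; everything else is routine Hilbert-space variational analysis, integration by parts to identify traces, and the elementary pointwise relation between $\curlbf$ and $\nabla$ in two dimensions. A small care point is the correct interpretation of the tangential/normal traces for merely $L^2$ fields with an $L^2$ curl (resp.\ divergence), which is standard once one works in $\mathbf{H}(\curl)$ and $\Hdiv$ and uses the density of smooth functions.
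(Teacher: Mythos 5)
The paper does not actually prove this lemma; it quotes it directly from Girault--Raviart (Theorem 3.1 and Corollary 3.1 of \cite{girault2012finite}). Your proposal supplies a self-contained derivation, and the core argument is correct: you define $q$ through the Poisson problem $(\nabla q,\nabla v)_{L^2(\Omega)}=(\mathbf{w},\curlbf v)_{L^2(\Omega)}$ on $H^1_0(\Omega)$ (well-posed by Lax--Milgram, and identical to the stated formulation since $\nabla q\cdot\nabla v=\curlbf q\cdot\curlbf v$ pointwise a.e.); you then show that the residual $\mathbf{z}=\mathbf{w}-\curlbf q$ is curl-free, invoke the $L^2$ Poincar\'e/de Rham lemma on a simply-connected Lipschitz domain to write $\mathbf{z}=\nabla\phi$ with $\phi\in H^1(\Omega)$, and use $\div\mathbf{z}=0$ and $\mathbf{z}\cdot\mathbf{n}=0$ to conclude that $\phi$ solves a homogeneous Neumann problem, so $\nabla\phi=\mathbf{z}=0$. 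Compared with simply citing the stream-function representation, your argument unbundles it into a standard elliptic solvability step plus the (weaker) Poincar\'e lemma, which is a reasonable and more transparent route.

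One slip to correct: after setting $\mathbf{z}=\mathbf{w}-\curlbf q$, you assert that $(\mathbf{z},\curlbf v)_{L^2(\Omega)}=0$ for all $v\in H^1_0(\Omega)$ is ``equivalently'' $\curl\mathbf{z}=0$ in distributions \emph{and} a vanishing tangential trace for $\mathbf{z}$. That is not an equivalence. Since the test functions vanish on $\partial\Omega$, the boundary term in the Green identity
\begin{align*}
\int_\Omega \mathbf{z}\cdot\curlbf v \,-\, \int_\Omega (\curl\mathbf{z})\,v \;=\; -\int_{\partial\Omega}(\mathbf{z}\cdot\boldsymbol{\tau})\,v
\end{align*}
is automatically zero, so the variational identity tells you $\curl\mathbf{z}=0$ but says nothing about the tangential trace of $\mathbf{z}$. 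This is harmless to the overall proof, which uses only $\curl\mathbf{z}=0$, $\div\mathbf{z}=0$, and the normal trace $\mathbf{z}\cdot\mathbf{n}=0$; nevertheless the parenthetical should be struck. It would also be worth spelling out why the normal trace of $\curlbf q$ (as an element of $H^{-1/2}(\partial\Omega)$, noting $\curlbf q\in\Hdiv$ since $\div\curlbf q=0$) vanishes for $q\in H^1_0(\Omega)$: approximate $q$ in $H^1$ by $q_n\in C_c^\infty(\Omega)$, so that $\curlbf q_n\to\curlbf q$ in $\Hdiv$ with each $\curlbf q_n$ having zero normal trace, and pass to the limit.
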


If $\Omega$ is not simply connected, the above representation of $\mathbf{w}$ as a curl of an $H^1$-function is still valid, thanks to the vanishing normal trace on $\partial \Omega$. In fact, as long as $\Omega$ is a Lipschitz domain, the Neumann condition $\mathbf{w}\cdot \mathbf{n}\vert_{\partial \Omega} = 0$ ensures that $\mathbf{w}$ can be canonically extended by zero to an $\Hdivzero$ function on the whole $\mathbb{R}^2$ (cf. \cite[Proposition 3.8]{Wen09}). For such an extension, the representation in Lemma \ref{lemma:decomposition:2d} for simply-connected domains can now be applied to any sufficiently large ball that contains $\Omega$. The only difference is that we can still ensure $q \in H^1(\Omega)$, but not necessarily that $q$ vanishes on $\partial \Omega$. 

With the above result, we can turn to the proof of Lemma \ref{lem-expo-deca-phi} for $d=2$. Note, however, that the decomposition does not come with a local estimate that allows to (locally) bound the $L^2$-norm of $q$ by the local $L^2$-norm of $\mathbf{w}$ while gaining an $H$ (see Lemma \ref{lem-stable-decomposition} for comparison). This is a major difference to the case $d=3$ and requires modified arguments.

\begin{proof}[Proof of Lemma \ref{lem-expo-deca-phi} for $d=2$]
Let $m \in \mathbb{N}$ be fixed and $m\geq 7$. Consider the cut-off function  $\eta:= \eta_{m-4}^T$ defined in \eqref{cut-off-func} with
$$
\eta=0 \quad \text {in} \quad \mathrm{N}^{m-4}(T), \quad \eta=1 \quad \text {in} \quad \Omega \backslash \mathrm{N}^{m-3}(T).
$$
We set $R^m_T:= \operatorname{supp}(\nabla \eta) = \overline{N^{m-3}(T)\backslash N^{m-4}(T)}$. As before, it holds
\begin{equation*}
\vertiii{\boldsymbol{\varphi}_T}_{\Omega\backslash N^m(T)}^2 \leq \left( \eta \, \mathbf{A}^{-1} \boldsymbol{\varphi}_T, \boldsymbol{\varphi}_T\right)_{L^2(\Omega)}.
\end{equation*}
Applying Lemma \ref{lemma:decomposition:2d} to $\boldsymbol{\varphi}_T \in \Wdivzero$ we obtain the existence of some $q_T \in H^1_0(\Omega)$ such that
\begin{equation*}
\boldsymbol{\varphi}_T = \curlbf \, q_T.
\end{equation*}
Since $\curlbf (\eta q_T) = q_T \, \curlbf\, \eta \,+\, \eta \,\, \curlbf \,q_T$
we obtain that
\begin{equation}\label{bili-spli-new}
\begin{aligned}
& \left( \eta \, \mathbf{A}^{-1} \boldsymbol{\varphi}_T, \boldsymbol{\varphi}_T\right)_{L^2(\Omega)} = a(  \eta \,\, \curlbf \,q_T, \boldsymbol{\varphi}_T) = a\left( \hspace{1pt}\curlbf (\eta q_T), \boldsymbol{\varphi}_T\right) - a\left( q_T \, \curlbf\, \eta , \boldsymbol{\varphi}_T\right)\\
= &\,\, a\left( \operatorname{\pi}_H \curlbf (\eta q_T), \boldsymbol{\varphi}_T\right) + a\left( \left(\id-\operatorname{\pi}_H\right) \curlbf (\eta q_T), \boldsymbol{\varphi}_T\right) - a\left( q_T \, \curlbf\, \eta  , \boldsymbol{\varphi}_T\right).
\end{aligned}
\end{equation}
Using \eqref{inclusion-curl-2D} we have $\curlbf (\eta q_T) \in \Hzerodivzero$, and hence, $\mathbf{r} :=\left(\id-\operatorname{\pi}_H\right)\curlbf (\eta q_T) \in  \Wdivzero$. Noting that $\mathbf{r} = \mathbf{0}$ in $T$, we find that
\begin{equation*}
a\left( \left(\id-\operatorname{\pi}_H\right) \curlbf (\eta q_T) , \boldsymbol{\varphi}_T\right) = a\left(\boldsymbol{\varphi}_T,\mathbf{r}\right) = F_T(\mathbf{r}) = 0.
\end{equation*}
Hence, \eqref{bili-spli-new} reduces to
\begin{eqnarray}
\label{bili-spli-new-2}
\left( \eta \, \mathbf{A}^{-1} \boldsymbol{\varphi}_T, \boldsymbol{\varphi}_T\right)_{L^2(\Omega)}
&=& a\left( \, \operatorname{\pi}_H \curlbf (\eta q_T)\,-\, q_T \hspace{1pt}\curlbf \eta \,, \boldsymbol{\varphi}_T\right).
\end{eqnarray}
Next, note that \,\,$\div(q_T \hspace{1pt}\curlbf \eta) =\nabla q_T \cdot \curlbf \eta \,\,\in\,\, L^2(\Omega)$\,\, 
and hence $q_T \hspace{1pt}\curlbf \eta \in \Hdiv$ and we can apply $\pi_H$ to it. With this, we split
\begin{eqnarray*}
 \operatorname{\pi}_H \curlbf (\eta q_T)\,-\, q_T \hspace{1pt}\curlbf \eta 
 &=&  \operatorname{\pi}_H \left( \curlbf (\eta q_T)\,-\, q_T \hspace{1pt}\curlbf \eta \right)
 - (\id - \pi_H) (q_T \hspace{1pt}\curlbf \eta)
 \\
&=& \operatorname{\pi}_H \hspace{-1pt}\left( \eta \, \curlbf q_T\right) \,-\, (\id -\pi_H)(q_T \hspace{1pt}\curlbf \eta).
\end{eqnarray*}
Inserting this identity into \eqref{bili-spli-new-2} yields
\begin{eqnarray}
\label{bili-spli-new-3}
\left( \eta \, \mathbf{A}^{-1} \boldsymbol{\varphi}_T, \boldsymbol{\varphi}_T\right)_{L^2(\Omega)} 
&=& \underbrace{a\left( \, \operatorname{\pi}_H \hspace{-1pt}\left(\eta \, \curlbf q_T\right) \,, \boldsymbol{\varphi}_T\,\right)}_{=:\,\mbox{I}} \,-\, \underbrace{ a\left( \, (\id -\pi_H)(q_T \hspace{1pt}\curlbf \eta) \,, \boldsymbol{\varphi}_T\,\right)}_{=:\,\mbox{II}}.
\end{eqnarray}
For the first term (I) on the right-hand side in \eqref{bili-spli-new-3}, we observe that 
\begin{align*}
  \operatorname{\pi}_H(\eta \, \curlbf q_T)=\operatorname{\pi}_H  \curlbf q_T = \operatorname{\pi}_H \boldsymbol{\varphi}_T =0\quad\text{on }\Omega \backslash \mathrm{N}^{m-2}(T),  
\end{align*}
and since $\eta=0$ on $\mathrm{N}^{m-4}(T)$,
we find that $\operatorname{supp} \left( \operatorname{\pi}_H(\eta \,  \curlbf q_T) \right) \subset N^1(R_T^m)$ and 
\begin{eqnarray*}
\mbox{I} \,\,\, = \,\,\, a\left(\operatorname{\pi}_H(\eta \, \curlbf q_T), \boldsymbol{\varphi}_T\right) &\lesssim& \beta \,\| \eta \, \curlbf q_T \|_{L^2\left(N^2(R_T^m)\right)} \|\boldsymbol{\varphi}_T\|_{L^2\left(N^1(R_T^m)\right)} \\
\nonumber
&\lesssim& \beta \, \|\boldsymbol{\varphi}_T\|_{L^2\left(N^{m-1}(T)\backslash N^{m-5}(T)\right)}^2.
\end{eqnarray*}
In order to estimate the second term (II) on the right-hand side in \eqref{bili-spli-new-3}, we use the local error estimate in Remark \ref{remark-error-estimates-piH} to obtain for each $T^{\prime}\in \mathcal{T}_H$
\begin{eqnarray}
\label{proof-Lem4.2-2D-step1}
\lefteqn{ \left\| (\id -\pi_H) (q_T \hspace{1pt}\curlbf \eta) \right\|_{L^2(T^{\prime})}^2 }\\
\nonumber&\lesssim& \underset{K\subset N^2(T^{\prime})}{\sum_{K \in \mathcal{T}_H}}
\min_{\mathbf{v}_K \in \mathcal{RT}_{\hspace{-1pt}k}(K)} \| q_T \hspace{1pt}\curlbf \eta - \mathbf{v}_K \|_{L^2(K)}^2+H^2\| (\id - \operatorname{P}_H)(\div (q_T \hspace{1pt}\curlbf \eta))\|_{L^2(K)}^2 .
\end{eqnarray}
Since $\div(\curlbf \eta)=0$ and since $\eta$ is a $\mathcal{T}_H$-piecewise linear function (in particular, $\left.\eta\right\rvert_K \in \mathbb{P}_1(K)$), we note that $\tfrac{1}{|K|}(1,q_T)_{L^2(K)} \curlbf \eta \in \mathcal{RT}_{\hspace{-1pt}k}(K)$ and $ \| \hspace{1pt} |\curlbf \eta| \hspace{1pt} \|_{L^{\infty}(\Omega)} =\| \hspace{1pt} |\nabla \eta| \hspace{1pt} \|_{L^{\infty}(\Omega)} \lesssim \tfrac{1}{H}$. Therefore, we can estimate
\begin{eqnarray}
\label{proof-Lem4.2-2D-step2}
\nonumber\lefteqn{ \min_{\mathbf{v}_K \in \mathcal{RT}_{\hspace{-1pt}k}(K)} \| q_T \hspace{1pt}\curlbf \eta - \mathbf{v}_K \|_{L^2(K)}
\,\,\,\le\,\,\, \| (q_T-\tfrac{1}{|K|}(1,q_T)_{L^2(K)}) \hspace{1pt}\curlbf \eta \|_{L^2(K)} } \\
&\lesssim& \frac{1}{H} \| q_T-\tfrac{1}{|K|}(1,q_T)_{L^2(K)}  \|_{L^2(K)}
\,\,\, \lesssim \,\,\, \| \nabla q_T \|_{L^2(K)}
\,\,\, = \,\,\, \| \boldsymbol{\varphi}_T  \|_{L^2(K)},
\end{eqnarray}
where we used the Poincar\'e inequality for functions with zero average on $K$, as well as $| \nabla q_T|= |\boldsymbol{\varphi}_T|$ according to Lemma \ref{lemma:decomposition:2d}. On the other hand, the local $L^2$-stability of $\operatorname{P}_H$ implies
\begin{eqnarray}
\label{proof-Lem4.2-2D-step3}
\nonumber\lefteqn{ H \| (\id - \operatorname{P}_H)(\div (q_T \hspace{1pt}\curlbf \eta))\|_{L^2(K)}
\,\,\,\lesssim \,\,\, H \| \div (q_T \hspace{1pt}\curlbf \eta)\|_{L^2(K)} }\\
&=& H  \| \nabla q_T \cdot \curlbf \eta\|_{L^2(K)}
\,\,\,\lesssim \,\,\, \| \nabla q_T \|_{L^2(K)} \,\,\, = \,\,\, \| \boldsymbol{\varphi}_T  \|_{L^2(K)}.
\end{eqnarray}
Inserting the bounds \eqref{proof-Lem4.2-2D-step2} and \eqref{proof-Lem4.2-2D-step3} into \eqref{proof-Lem4.2-2D-step1}, we obtain that
\begin{eqnarray*}
\left\| (\id -\pi_H) (q_T \hspace{1pt}\curlbf \eta) \right\|_{L^2(T^{\prime})} 
&\lesssim& \| \boldsymbol{\varphi}_T  \|_{L^2(N^2(T^{\prime}))} .
\end{eqnarray*}
Since $\mbox{supp}((\id -\pi_H) (q_T \hspace{1pt}\curlbf \eta)) \subset N^1(R_T^m)$, we can bound the second term (II) on the right-hand side in \eqref{bili-spli-new-3} as follows:
\begin{eqnarray*}
\mbox{II} &\lesssim& \beta \, \| \boldsymbol{\varphi}_T \|_{L^2(N^1(R_T^m))} \,\,\| (\id -\pi_H)(q_T\, \curlbf \eta) \|_{L^2(N^1(R_T^m))} \\
&\lesssim& \beta \, \| \boldsymbol{\varphi}_T \|_{L^2(N^1(R_T^m))} \,\, \| \boldsymbol{\varphi}_T \|_{L^2(N^3(R_T^m))} 
\,\,\,
\lesssim 
\,\,\, \beta \,\| \boldsymbol{\varphi}_T \|_{L^2(N^{m}(T)\backslash N^{m-7}(T))}^2.
\end{eqnarray*}
Combining the estimates for terms I and II  with \eqref{bili-spli-new-3}, we deduce that
\begin{eqnarray*}
 \vertiii{\boldsymbol{\varphi}_T}_{\Omega\backslash N^m(T)}^2 
 &\leq& \, C \, \beta \, \|\boldsymbol{\varphi}_T\|_{L^2\left(N^{m}(T)\backslash N^{m-7}(T)\right)}^2 \\  
  &\leq& C \, \frac{\beta}{\alpha} \,
 \left(\vertiii{\boldsymbol{\varphi}_T}_{\Omega\backslash N^{m-7}(T)}^2  -\vertiii{\boldsymbol{\varphi}_T}_{\Omega\backslash N^m(T)}^2 \right), 
\end{eqnarray*}
where $C>0$ is a constant is independent of $T$, $m$, $H$, $\mathbf{A}$.  Setting $\theta := \frac{C\tfrac{\beta}{\alpha}}{1+C\tfrac{\beta}{\alpha}} < 1$, we find that $\vertiii{\boldsymbol{\varphi}_T}_{\Omega\backslash N^m(T)}^2 
 \leq\theta \vertiii{\boldsymbol{\varphi}_T}_{\Omega\backslash N^m(T)}^2$, and hence, 
\begin{equation*}
\vertiii{\boldsymbol{\varphi}_T}_{\Omega\backslash N^m(T)}^2 \lesssim \theta^{\lfloor m / 7\rfloor}\vertiii{\boldsymbol{\varphi}_T}^2
\end{equation*}
by a recursive application of the estimate.
\end{proof}

Next, we consider Lemma \ref{lem-expo-deca-phi-phi-l} for $d=2$.

\begin{proof}[Proof of Lemma \ref{lem-expo-deca-phi-phi-l} for $d=2$]
The proof remains similar to the case $d=3$. Using Lemma \ref{lemma:decomposition:2d}, we write $\boldsymbol{\varphi}_T = \curlbf q_T = (\id-\operatorname{\pi}_H)\curlbf q_T$ for some $q_T \in H^1_0(\Omega)$. Now, consider the cut-off function $\eta:= 1-\eta_{m-2}^T$ with $\eta_{m-2}^T$ defined as in \eqref{cut-off-func} with $m$ replaced by $m-2$. In particular, $\eta=1$ in $\mathrm{N}^{m-2}(T)$ and $\eta=0$ in $\Omega \backslash \mathrm{N}^{m-1}(T)$. Since $\boldsymbol{\varphi}_{T}^m$ is the Galerkin best-approximation to $\boldsymbol{\varphi}_{T}$ in $W_{\operatorname{div}\hspace{-1pt}0}(N^m(T))\subset \Wdivzero$, and since $(\id-\operatorname{\pi}_H)\curlbf(\eta\, q_T) \in W_{\operatorname{div}\hspace{-1pt}0}(N^m(T))$, we obtain that
\begin{equation*}
\vertiii{\boldsymbol{\varphi}_T-\boldsymbol{\varphi}_{T}^m} 
\,\le\,
\vertiii{ (\id-\operatorname{\pi}_H)\curlbf ((1-\eta)q_T) }
\, = \,\vertiii{ (\id-\operatorname{\pi}_H)\curlbf ((1-\eta)q_T) }_{ \Omega \setminus N^{m-3}(T)}.
\end{equation*}
For $T^{\prime}\in \mathcal{T}_H$ with $T^{\prime} \subset \Omega \setminus N^{m-3}(T)$, we apply the error estimate from Remark \ref{remark-error-estimates-piH} to obtain with the usual arguments that
\begin{eqnarray*}
 \lefteqn{ \left\| (\id-\operatorname{\pi}_H)\curlbf ((1-\eta)q_T) \right\|_{L^2(T^{\prime})}^2 
\,\,\,\lesssim\,\,\, 
\underset{K\subset N^2(T^{\prime})}{\sum_{K \in \mathcal{T}_H}}
\min_{\mathbf{v}_K \in \mathcal{RT}_{\hspace{-1pt}k}(K)} \|\curlbf ((1-\eta)q_T) - \mathbf{v}_K \|_{L^2(K)}^2 }\\
&\lesssim& 
 \| (1-\eta) \curlbf q_T \|_{L^2(N^2(T^{\prime}))}^2 + 
 \underset{K\subset N^2(T^{\prime})}{\sum_{K \in \mathcal{T}_H}}
 \| (q_T- \tfrac{1}{|K|} (1,q_T)_{L^2(K)}) \, \curlbf (1-\eta)\|_{L^2(K)}^2 \\
&\lesssim& 
 \| \curlbf q_T \|_{L^2(N^2(T^{\prime}))}^2 + 
 \| \nabla q_T \|_{L^2(N^2(T^{\prime}))}^2 
 \,\,\, \lesssim \,\,\,  \| \boldsymbol{\varphi}_T \|_{L^2(N^2(T^{\prime}))}^2.
\end{eqnarray*}
We conclude that
\begin{equation*}
\vertiii{\boldsymbol{\varphi}_T-\boldsymbol{\varphi}_{T}^m} \,\,\, \lesssim  \,\,\, \vertiii{\boldsymbol{\varphi}_T}_{ \Omega\backslash N^{m-5}(T)},
\end{equation*}
and Lemma \ref{lem-expo-deca-phi} for $d=2$ completes the proof.
\end{proof}

It remains to prove Lemma \ref{lem-expo-deca-loca-erro} for $d=2$.

\begin{proof}[Proof of Lemma \ref{lem-expo-deca-loca-erro} for $d=2$]
We apply the same strategy as before. Let $\mathbf{e}:=\left(\operatorname{\mathcal{C}}-\operatorname{\mathcal{C}}^m\right)\mathbf{v} \in \Wdivzero$ and for each $T \in \mathcal{T}_H$ consider the cut-off functions $\eta_T:=\eta_T^{m+1}$ defined as in \eqref{cut-off-func} with $m$ replaced by $m+1$. In particular, $\eta_T=0$ in $\mathrm{N}^{m+1}(T)$ and $\eta_T=1$ in $\Omega \backslash \mathrm{N}^{m+2}(T)$. Using Lemma \ref{lemma:decomposition:2d}, we write $\mathbf{e} = \mathbf{e} - \operatorname{\pi}_H \mathbf{e} = \curlbf q$ for some $q \in H^1_0(\Omega)$. Now consider $\mathbf{e}_T: = (\id-\operatorname{\pi}_H)\curlbf(\eta_T \,q)$ with support in $\overline{\Omega \setminus \mathrm{N}^{m}(T)}$. Then, we have $a(\mathbf{e}_T, \mathbf{e})= a(\mathbf{e}_T, \left(\operatorname{\mathcal{C}}_T-\operatorname{\mathcal{C}}_{T}^m\right) \mathbf{v} ) = a(\mathbf{e}_T, \operatorname{\mathcal{C}}_T \mathbf{v}) = ( \mathbf{A}^{-1} \mathbf{e}_T ,  \mathbf{v} )_{L^2(T)} = 0$, and hence,
\begin{eqnarray*}
\vertiii{\mathbf{e}}^2 
&=& \sum_{T \in \mathcal{T}_H} a(\mathbf{e}-\mathbf{e}_T, \left(\operatorname{\mathcal{C}}_T-\operatorname{\mathcal{C}}_{T}^m\right) \mathbf{v} ) 
\,\,\,=\,\,\, \sum_{T \in \mathcal{T}_H} a(\left(\id - \operatorname{\pi}_H \right)\curlbf\left(\left(1-\eta_T\right)\mathbf{q}\right), \left(\operatorname{\mathcal{C}}_T-\operatorname{\mathcal{C}}_{T}^m\right) \mathbf{v} ).
\end{eqnarray*}
The contribution $\left(\id - \operatorname{\pi}_H \right)\curlbf\left(\left(1-\eta_T\right)\mathbf{q}\right)$ is estimated as in the proof of Lemma \ref{lem-expo-deca-phi-phi-l} for $d=2$ and the contribution $\left(\operatorname{\mathcal{C}}_T-\operatorname{\mathcal{C}}_{T}^m\right) \mathbf{v}$ is estimated directly estimated with Lemma \ref{lem-expo-deca-phi-phi-l}. Combining everything yields $\vertiii{\mathbf{e}}^2 \,\lesssim\, (m^d+1)\, \theta^m\,
\vertiii{ \mathbf{e} }\, \vertiii{ \mathbf{v} }$ which completes the proof.
\end{proof}

\end{appendix}

\end{document}